\theoremstyle{definition}
\newtheorem{definition}{Definition}[section]
\newtheorem{theorem}{Theorem}[section]
\newtheorem{proposition}{Proposition}[section]
\newtheorem{corollary}{Corollary}[section]
\newtheorem{lemma}{Lemma}[section]
\newtheorem{example}{Example}[section]
\newtheorem{conjecture}{Conjecture}[section]
\newcommand*{\isoarrow}[1]{\ar[#1,"\rotatebox{90}{\(\sim\)}"]}
\newcommand{\Addresses}{{
		\bigskip
		\footnotesize
		
		P.~Kosenko, \textsc{Department of Mathematics, Higher School of Economics, Moscow, Usacheva str. 6}\par\nopagebreak
		\textit{E-mail address}, P.~Kosenko: \texttt{prkosenko@edu.hse.ru}\par\nopagebreak
		\textsc{Department of Mathematics, University of Toronto, 40 St. George St., Toronto, ON, Canada}\par\nopagebreak
		\textit{E-mail address: } \texttt{petr.kosenko@mail.utoronto.ca}
}}
\title{Homological dimensions of smooth crossed products}
\author{Petr Kosenko}
\begin{document}
	\maketitle
	
	
	\begin{abstract}
		In this paper we provide upper estimates for the global projective dimensions of smooth crossed products $\mathscr{S}(G, A; \alpha)$ for $G = \mathbb{R}$ and $G = \mathbb{T}$ and a self-induced Fr\'echet-Arens-Michael algebra $A$. In order to do this, we provide a powerful generalization of methods which are used in the works of Ogneva and Helemskii.
	\end{abstract}
	
	\section*{Introduction}
	\indent
	There are numerous papers dedicated to homological properties of smooth crossed products of Fr\'echet algebras and C*-algebras, see \cite{schweitzer1993dense}, \cite{phillips1994representable}, \cite{2004math.....10596M}, \cite{2011arXiv1111.2154G}, or \cite{neshveyev2014smooth}, for example.
	
	However, it seems that nothing is known about homological dimensions of smooth crossed products. In the paper \cite{2017arXiv171206177K} we provided the estimates for homological dimensions of holomorphic Ore extensions and smooth crossed products by $\mathbb{Z}$ of unital $\hat{\otimes}$-algebras, and in this paper we show that the methods of the author's previous works and the paper \cite{ogneva1984homological} can be adapted to smooth crossed products by $\mathbb{R}$ and $\mathbb{T}$.
	
	The idea behind the estimates lies in the construction of admissible $\widehat{\Omega}^1$-like sequences for the required \textit{non-unital} algebras. What do we mean by that? Recall the definition of a bimodule of relative 1-forms:
	
	\begin{definition}
		Let $A$ be an algebra and $X$ be an $A$-bimodule. A linear map $d : A \longrightarrow X$ is called an $A$-\textit{derivation} if
		\[
		d(ab) = d(a) \circ b + a \circ d(b)
		\]
		for every $a, b \in A$.
	\end{definition}
	
	\begin{definition}
		Let $R$ be a unital $\hat{\otimes}$-algebra, and let $A$ denote a unital $R$-$\hat{\otimes}$-algebra (see Definition \ref{non-unital}). A pair $\left(\widehat{\Omega}^1_R(A), d_A\right)$, which consists of a $A$-$\hat{\otimes}$-bimodule $\widehat{\Omega}^1_R(A)$ and a continuous $R$-derivation $d_A : A \longrightarrow \widehat{\Omega}^1_R(A)$, is called the \textit{bimodule of relative 1-forms} of $A$, if this pair is universal in the following sense:
		
		for every $A$-$\hat{\otimes}$-bimodule $M$ and a continuous $R$-derivation $D : A \longrightarrow M$ there exists a unique continuous $A$-$\hat{\otimes}$-bimodule homomorphism $\widetilde{D} : \widehat{\Omega}^1_R(A) \longrightarrow M$ such that $D = \widetilde{D} \circ d_A$.
		\[
		\begin{tikzcd}
		\widehat{\Omega}^1_R(A) \ar[r, "\widetilde{D}"] & M \\
		A \ar[u, "d_A"] \ar[ur, "D"']
		\end{tikzcd}
		\]
	\end{definition}

	This construction is a topological version of a construction presented in \cite{CuntzQuil}. It is not hard to prove that $\widehat{\Omega}^1_R(A)$ is a well-defined object, moreover, this bimodule is a part of an extremely useful admissible sequence. The following theorem is the topological version of \cite[Proposition 2.5]{CuntzQuil}.
	
	\begin{theorem}[\cite{PirkAM}, Proposition 7.2]
		Let $R$ be a unital $\hat{\otimes}$-algebra and let $A$ denote a unital $R$-$\hat{\otimes}$-algebra. Then there exists a sequence which splits in the categories $A$-$R$-$\hat{\otimes}$-\textbf{mod} and $R$-$A$-$\hat{\otimes}$-\textbf{mod}:
		\begin{equation}
			\label{seq1}
			\begin{tikzcd}
			0 \ar[r] & \widehat{\Omega}^1_R(A) \ar[r, "j"] & A \hat{\otimes}_R A \ar[r, "m"] & A \ar[r] & 0,
			\end{tikzcd}
		\end{equation}
		where $m(a \otimes b) = ab.$ In particular, this sequence is admissible.
	\end{theorem}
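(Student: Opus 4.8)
The plan is to realize $\widehat{\Omega}^1_R(A)$ concretely as the kernel of the multiplication map and to read the splitting off directly from the tensor structure. First I would check that $m\colon A \hat{\otimes}_R A \to A$, $m(a \otimes b) = ab$, is a well-defined continuous morphism in both $A$-$R$-$\hat{\otimes}$-\textbf{mod} and $R$-$A$-$\hat{\otimes}$-\textbf{mod}: it is left $A$-linear in the left factor, right $A$-linear in the right factor, and balanced over $R$ because the image of $R$ in $A$ is central. I would then introduce the candidate sections $\sigma(a) = a \otimes 1$ and $\sigma'(a) = 1 \otimes a$. A short computation using the balancing relation $a\cdot r \otimes b = a \otimes r\cdot b$ shows that $\sigma$ is a morphism of $A$-$R$-bimodules and $\sigma'$ a morphism of $R$-$A$-bimodules, and that $m \circ \sigma = m \circ \sigma' = \mathrm{id}_A$. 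Hence $m$ admits a section in each of the two categories, so its kernel $K := \ker m$ fits into a short exact sequence that splits in $A$-$R$-$\hat{\otimes}$-\textbf{mod} and in $R$-$A$-$\hat{\otimes}$-\textbf{mod}; since a module splitting is in particular a continuous linear splitting, the sequence is admissible.

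It then remains to identify $K$ with $\widehat{\Omega}^1_R(A)$. I would define $d_A \colon A \to K$ by $d_A(a) = 1 \otimes a - a \otimes 1$, observe that $m(d_A(a)) = a - a = 0$ so the image lands in $K$, and verify that $d_A$ is a continuous $R$-derivation: the Leibniz rule is the telescoping identity $1 \otimes ab - ab \otimes 1 = a \circ (1 \otimes b - b \otimes 1) + (1 \otimes a - a \otimes 1) \circ b$, and $R$-linearity follows from centrality of the image of $R$. The section $\sigma$ produces the idempotent $P = \mathrm{id} - \sigma \circ m$ projecting $A \hat{\otimes}_R A$ onto $K$, with $P(a \otimes b) = a \otimes b - ab \otimes 1 = a \circ d_A(b)$; since elementary tensors are total in $A \hat{\otimes}_R A$ and $P$ is continuous and surjective onto $K$, the elements $a \circ d_A(b)$ are total in $K$.

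For the universal property, given an $A$-$\hat{\otimes}$-bimodule $M$ and a continuous $R$-derivation $D \colon A \to M$, I would first build the continuous left $A$-linear map $A \hat{\otimes} A \to M$, $a \otimes b \mapsto a \circ D(b)$, and check that it factors through $A \hat{\otimes}_R A$: the two ways of moving the image of $R$ across the tensor sign agree precisely because $D$ annihilates $R$. Restricting the resulting map $\tilde\mu_D \colon A \hat{\otimes}_R A \to M$ to $K$ and calling it $\widetilde D$, a direct computation on the total set $\{\,a \otimes b - ab \otimes 1\,\}$ shows that $\widetilde D$ is right $A$-linear as well (the extra Leibniz terms cancel), that it is left $A$-linear by construction, and that $\widetilde D \circ d_A = D$ since $D(1) = 0$. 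Uniqueness follows because any bimodule map agreeing with $D$ through $d_A$ must send $a \circ d_A(b)$ to $a \circ D(b)$, and these elements are total in $K$ by the previous step. This exhibits $(K, d_A)$ as the bimodule of relative $1$-forms, and taking $j$ to be the inclusion $K \hookrightarrow A \hat{\otimes}_R A$ yields the desired sequence.

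The main obstacle is topological rather than algebraic: every "factor through the tensor product over $R$" and every "these elements are total" step, automatic for algebraic tensor products, must be justified for the completed projective tensor product. Concretely I expect the delicate points to be (i) that $A \hat{\otimes}_R A$ is the quotient of $A \hat{\otimes} A$ by the \emph{closed} linear span of the balancing relations, so that maps out of it correspond to continuous maps on $A \hat{\otimes} A$ that vanish on that closed span, and (ii) that $K = \ker m$ really coincides with the closed span of the $a \circ d_A(b)$ — which is exactly what the continuous projection $P$ guarantees, and is the reason the splitting is used so heavily. Once continuity and these closures are controlled, the identification and the universal property go through as in the algebraic Cuntz--Quillen argument.
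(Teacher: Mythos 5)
The paper does not actually prove this statement --- it is quoted from \cite{PirkAM} --- and your argument is essentially the standard Cuntz--Quillen construction carried out there: realize $\widehat{\Omega}^1_R(A)$ as $\ker m$, split $m$ by $a \mapsto a \otimes 1$ (resp.\ $a \mapsto 1 \otimes a$), set $d_A(a) = 1 \otimes a - a \otimes 1$, and use the idempotent $\mathrm{id} - \sigma \circ m$ to see that the elements $a \circ d_A(b)$ are total in $\ker m$; the topological caveats you flag (quotient by the \emph{closed} span of the balancing relations, totality arguments via the continuous projection) are exactly the ones that matter. One correction: at three points you justify a step by appealing to ``centrality of the image of $R$ in $A$.'' Nothing in the definition of a unital $R$-$\hat{\otimes}$-algebra forces $\eta(R)$ to be central --- the bimodule structure is $a \circ s = \eta(a)s$, $s \circ a = s\eta(a)$ for an arbitrary unital homomorphism $\eta$ --- and centrality is in fact never needed. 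The $R$-balancedness of $m$ is just associativity, $(a\eta(r))b = a(\eta(r)b)$; the left $R$-linearity of $d_A$ follows from the balancing relation $\eta(r) \otimes a = 1 \otimes \eta(r)a$ in $A \hat{\otimes}_R A$; and the factorization of $a \otimes b \mapsto a \circ D(b)$ through $A \hat{\otimes}_R A$ uses only that $D$ is left $R$-linear (equivalently, kills $\eta(R)$), so that $a \circ D(\eta(r)b) = (a\eta(r)) \circ D(b)$. With these justifications substituted, your proof is correct and matches the cited one.
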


	In the paper \cite{2017arXiv171206177K} we utilized the sequence \ref{seq1} in order to obtain the upper estimates for the homological dimensions of different types of non-commutative Ore-like extensions. This sequence proves to be quite useful because in the unital case for the extensions we study it turns out that $\widehat{\Omega}^1_R(A) \cong A \hat{\otimes}_R A$ as a $R$-$\hat{\otimes}$-module.
		
	However, when $G = \mathbb{R}$ or $G = \mathbb{T}$, then for a Fr\'echet-Arens-Michael algebra $A$ the algebras $\mathscr{S}(G, A; \alpha)$ are, in general, not unital. Nevertheless, we managed to obtain the exact sequences for these algebras, which look similar to the \eqref{seq1}, and which allowed us to derive the upper estimates for the global projective dimensions of $\mathscr{S}(\mathbb{R}, A; \alpha)$ and $\mathscr{S}(\mathbb{T}, A; \alpha)$. 
	
	We conjecture that estimates for homological dimensions should look as follows:
	\begin{conjecture}
		Let $A$ be a Fr\'echet-Arens-Michael algebra (not necessarily unital) with a smooth $m$-tempered action $\alpha$ of $\mathbb{R}$ or $\mathbb{T}$ on $A$. Denote the left (projective) global dimension by $\text{dgl}$. Then for $G = \mathbb{R}$ or $G = \mathbb{T}$ we have
		\[
			\text{dgl}(A) \le \text{dgl}(\mathscr{S}(G, A; \alpha)) \le \text{dgl}(A) + 1.
		\]
	\end{conjecture}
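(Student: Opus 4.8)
The plan is to prove the two inequalities by quite different means, treating the upper bound $\text{dgl}(\mathscr{S}(G, A; \alpha)) \le \text{dgl}(A) + 1$ as the principal quantitative content and the lower bound $\text{dgl}(A) \le \text{dgl}(\mathscr{S}(G, A; \alpha))$ as a more structural statement. Write $S := \mathscr{S}(G, A; \alpha)$ throughout. The upper bound is obtained by transporting the machinery of the sequence \eqref{seq1} to the non-unital setting, using the coefficient algebra $A$ itself (viewed as a non-unital subalgebra of $S$) as the base; the lower bound instead requires a retraction of $S$ onto $A$ that is visible to homology.

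For the upper bound I would first produce the non-unital analogue of \eqref{seq1}: an admissible short exact sequence of $S$-$\hat{\otimes}$-bimodules
\[
0 \to \widehat{\Omega}^1_A(S) \to S \hat{\otimes}_A S \to S \to 0,
\]
with right-hand map the multiplication $a \otimes b \mapsto ab$. Its existence in the smooth $m$-tempered setting is precisely what the constructions adapted from \cite{ogneva1984homological} and the author's previous work are meant to supply, and the essential feature to verify is that both middle terms are induced from $A$: the module $S \hat{\otimes}_A S$ is induced by definition, and — reflecting the one-dimensionality of the group — the relative form module $\widehat{\Omega}^1_A(S)$ should again be induced from an $A$-bimodule (in the $\mathbb{Z}$-case of \cite{2017arXiv171206177K} one even had $\widehat{\Omega}^1_R(A) \cong A \hat{\otimes}_R A$). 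Granting this, for an arbitrary left $S$-module $M$ I would tensor the sequence on the right by $M$ over $S$; admissibility keeps it exact and, after the canonical identifications, yields
\[
0 \to \widehat{\Omega}^1_A(S) \hat{\otimes}_S M \to S \hat{\otimes}_A M \to M \to 0.
\]
Since the induction functor $S \hat{\otimes}_A (-)$ is exact on admissible sequences and carries projective $A$-modules to projective $S$-modules, both outer-flanking terms have homological dimension over $S$ bounded by $\text{dgl}(A)$; feeding this into the long exact $\text{Ext}_S$-sequence of the displayed extension gives $\text{dh}_S(M) \le \text{dgl}(A) + 1$, and taking the supremum over $M$ produces the upper bound.

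For the lower bound I would try to realize $A$ as a homological retract of $S$. In the compact case $G = \mathbb{T}$ this looks tractable: integration against normalized Haar measure furnishes a conditional expectation $E : S \to A$ which is an $A$-bimodule projection splitting the canonical inclusion, exhibiting $A$ as an $A$-bimodule direct summand of $S$, so that any projective $S$-resolution of an $A$-module can be restricted and compressed to a projective $A$-resolution of no greater length, giving $\text{dgl}(A) \le \text{dgl}(S)$. The non-compact case $G = \mathbb{R}$ is where I expect the main obstacle to lie: there is no finite Haar measure, no unit, and no evident averaging projection $S \to A$, so the splitting argument collapses; here one would instead have to exploit the dual $\widehat{G}$-action together with Takai-type duality, or an approximate-identity limit over compactly supported functions, to transfer resolutions between $A$ and $S$. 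The genuinely hard point throughout is analytic rather than formal: one must guarantee that the splittings and induced resolutions survive passage to the completed projective tensor products in the $m$-tempered Fréchet topology and remain \emph{admissible}, and it is exactly the difficulty of controlling these completions — in the absence of a unit and, for $\mathbb{R}$, of compactness — that keeps the two-sided estimate at the level of a conjecture.
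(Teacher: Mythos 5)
You should first note that the paper does not prove this statement: it is explicitly left as a conjecture, and what the paper actually establishes (Theorems \ref{main theorem3}, \ref{main theorem4}) is only the weaker bound $\text{dgl}(\mathscr{S}(G,A;\alpha)) \le \max\{\text{dgl}(A),1\}+1$, and only under the additional hypothesis that $A$ is self-induced. Your upper-bound strategy is in the same spirit as the paper's, but it has a gap exactly at the point that forces the paper to settle for the weaker estimate. When you apply $(-)\hat{\otimes}_S M$ (or $M\hat{\otimes}_S(-)$) to the sequence $0 \to \widehat{\Omega}^1_A(S) \to S\hat{\otimes}_A S \to S \to 0$, the right-hand term becomes $S\hat{\otimes}_S M$, not $M$: since $S$ is non-unital, the canonical map $S\hat{\otimes}_S M \to M$ is an isomorphism only for \emph{essential} modules. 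So your long-exact-sequence argument bounds $\text{dh}_S(S\hat{\otimes}_S M)$ by $\text{dgl}(A)+1$, which is precisely the content of Lemma \ref{estimate for essential modules}. To pass from essential modules to arbitrary $M$ the paper invokes Helemskii's comparison complex, which yields $\text{dh}_S(M) \le \max\{\text{dh}_S(S\hat{\otimes}_S M), 2\}$, and this $\max$ with $2$ is where the conjectured bound $\text{dgl}(A)+1$ degrades to $\max\{\text{dgl}(A),1\}+1$. Your proposal silently performs the identification $S\hat{\otimes}_S M \cong M$ and therefore proves less than it claims. You also use the isomorphism type of $S\hat{\otimes}_A S$ (to see the middle terms as induced), which is where the self-inducedness of $A$ enters; the conjecture as stated drops that hypothesis, and you would need to either justify or reinstate it.

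The lower bound is in worse shape. Your argument for $G=\mathbb{T}$ presupposes a ``canonical inclusion'' $A \hookrightarrow C^\infty(\mathbb{T},A;\alpha)$ split by the averaging map, but no such algebra embedding exists: the constant-function map $a \mapsto (x\mapsto a)$ satisfies $(a *_\alpha b)(x) = a\int_{\mathbb{T}}\alpha_y(b)\,\mathrm{d}y$, which is not $ab$ unless $\alpha$ is trivial, and for $G=\mathbb{R}$ constants are not even in $\mathscr{S}(\mathbb{R},A)$. Moreover, even if $A$ were an $A$-bimodule direct summand of $S$, that by itself does not let you ``restrict and compress'' a projective $S$-resolution into a projective $A$-resolution; restriction of scalars along $A \to S_+$ does not preserve projectivity without further hypotheses. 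So the inequality $\text{dgl}(A) \le \text{dgl}(S)$ remains unproven on both sides of your case split, consistent with the paper's decision to leave the two-sided estimate as a conjecture.
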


	The main results of this paper are Theorems \ref{main theorem}, \ref{main theorem2}, \ref{main theorem3}, \ref{main theorem4}. In particular, we have proven the weak form of the above conjecture.
	
	\begin{theorem}
		Let $A$ be a Fr\'echet-Arens-Michael algebra, which satisfies the following condition: the multiplication map $m : A \hat{\otimes}_A A \longrightarrow A$ is a $A$-$\hat{\otimes}$-bimodule isomorphism. Also let $\alpha$ denote a smooth $m$-tempered action of $\mathbb{R}$ or $\mathbb{T}$ on $A$. Denote the left (projective) global dimension by $\text{dgl}$. Then for $G = \mathbb{R}$ or $G = \mathbb{T}$ we have
		\[
			\text{dgl}(\mathscr{S}(G, A; \alpha)) \le \max \{ \text{dgl}(A), 1 \} + 1
		\]
	\end{theorem}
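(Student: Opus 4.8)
The plan is to transport the relative de Rham machinery of \eqref{seq1} to the non-unital algebra $B := \mathscr{S}(G, A; \alpha)$, taking $A$ as the base algebra, and then to run a change-of-rings argument combined with a dimension shift along the resulting two-term resolution. Since $A$ is self-induced (the map $m : A \hat{\otimes}_A A \to A$ is an isomorphism), the correct ambient category is that of essential (nondegenerate) $\hat{\otimes}$-modules, in which the induction functors $B \hat{\otimes}_A (-)$ and $(-) \hat{\otimes}_A B$ behave well. The first step is to verify that $B$ is itself self-induced and that $B$ is an essential $A$-bimodule; this should follow from the description of $\mathscr{S}(G, A; \alpha)$ as a completion built from $\mathscr{S}(G) \hat{\otimes} A$ together with the self-inducedness of $A$ and the smoothness and $m$-temperedness of $\alpha$, which guarantee joint continuity of the twisted convolution and allow approximate identities of $A$ to furnish the essential structure on $B$. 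With this in place I would establish the non-unital analogue of \eqref{seq1}: an admissible sequence of $B$-bimodules
\[
\begin{tikzcd}
0 \ar[r] & \widehat{\Omega}^1_A(B) \ar[r, "j"] & B \hat{\otimes}_A B \ar[r, "m"] & B \ar[r] & 0,
\end{tikzcd}
\]
splitting in the one-sided module categories. The universal property defining $\widehat{\Omega}^1_A(B)$ carries over verbatim once ``unital module'' is replaced by ``essential module'', and admissibility together with one-sided splitting is obtained as in the unital proof of \cite{PirkAM}, with the self-induced hypothesis playing the role of the unit.

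The decisive computation is the identification $\widehat{\Omega}^1_A(B) \cong B \hat{\otimes}_A B$ as a $B$-bimodule, and here I would exploit that $G$ is one-dimensional: every $A$-derivation of $B$ is generated over $B$ by the single infinitesimal generator of the translation action (the operator $d/dt$ for $G = \mathbb{R}$, respectively $d/d\theta$ for $G = \mathbb{T}$), so the module of relative $1$-forms is free of rank one and hence isomorphic to $B \hat{\otimes}_A B$. The sequence then becomes
\[
\begin{tikzcd}
0 \ar[r] & B \hat{\otimes}_A B \ar[r, "j"] & B \hat{\otimes}_A B \ar[r, "m"] & B \ar[r] & 0,
\end{tikzcd}
\]
an admissible, one-sidedly split resolution of the bimodule $B$ by the induced bimodules $B \hat{\otimes}_A B$.

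To extract the estimate, fix an essential left $B$-module $M$ and apply the functor $(-) \hat{\otimes}_B M$. Because the sequence splits as right $B$-modules this functor preserves its exactness, and using $B \hat{\otimes}_B M \cong M$ together with $(B \hat{\otimes}_A B) \hat{\otimes}_B M \cong B \hat{\otimes}_A M$ I obtain an admissible short exact sequence of left $B$-modules
\[
\begin{tikzcd}
0 \ar[r] & B \hat{\otimes}_A M \ar[r] & B \hat{\otimes}_A M \ar[r] & M \ar[r] & 0.
\end{tikzcd}
\]
A change-of-rings argument bounds the projective $B$-dimension of the induced module in terms of data over $A$, giving $\text{dh}_B(B \hat{\otimes}_A M) \le \max\{\text{dgl}(A), 1\}$; the standard dimension-shift inequality applied to the displayed sequence then reads $\text{dh}_B M \le \text{dh}_B(B \hat{\otimes}_A M) + 1$, and taking the supremum over $M$ yields the claimed bound $\text{dgl}(\mathscr{S}(G, A; \alpha)) \le \max\{\text{dgl}(A), 1\} + 1$.

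I expect the main obstacle to be the construction and identification in the non-unital setting: verifying that $\widehat{\Omega}^1_A(B)$ is well defined, that the sequence is admissible and one-sidedly split, and above all that $\widehat{\Omega}^1_A(B) \cong B \hat{\otimes}_A B$. All three depend on the self-induced hypothesis and on the analytic input --- smoothness and $m$-temperedness of $\alpha$ --- required to control the completed tensor products and to guarantee the freeness of the relative differentials. The ``$\max$ with $1$'' is an artifact of this method rather than of the problem: in the non-unital essential category one cannot ensure that the induced modules $B \hat{\otimes}_A M$ inherit $B$-projectivity as cleanly as in the unital case, so the change-of-rings estimate floors at $1$ instead of reaching $\text{dgl}(A)$, which is exactly why the argument delivers only the weak form of the conjecture.
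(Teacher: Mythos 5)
Your overall strategy --- a two-term $\widehat{\Omega}^1$-like resolution of $B=\mathscr{S}(G,A;\alpha)$ over the base $A$, followed by change of rings and a dimension shift --- is indeed the skeleton of the paper's argument. But there are two genuine gaps. The first is decisive: you fix an \emph{essential} left $B$-module $M$, prove $\text{dh}_B(M)\le \text{dgl}(A)+1$, and then ``take the supremum over $M$.'' The left global dimension is the supremum over \emph{all} $\hat{\otimes}$-modules, and $B$ is non-unital, so non-essential modules exist and are not covered by your argument. The paper closes this gap with a separate step: Helemskii's three-term decomposition (Theorem 5.2.1 of his book) yields $\text{dh}_B(X)\le\max\{\text{dh}_B(B\hat{\otimes}_B X),2\}$ for an \emph{arbitrary} module $X$, reducing to the essential module $B\hat{\otimes}_B X$. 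This is exactly where the floor in $\max\{\text{dgl}(A),1\}+1$ comes from; your attribution of it to the change-of-rings step is wrong --- the change of rings gives $\text{dh}_B(B\hat{\otimes}_A M)\le\text{dh}_A(M)\le\text{dgl}(A)$ with no floor (it needs $B$ projective as a one-sided $A$-module, which the paper establishes separately via the untwisting isomorphism $F(x)\mapsto\alpha_{-x}(F(x))$).

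The second gap is that the existence, exactness, and one-sided splitting of the sequence $0\to B\hat{\otimes}_A B\to B\hat{\otimes}_A B\to B\to 0$ is asserted to ``carry over verbatim'' from the unital theory with self-inducedness ``playing the role of the unit,'' and the identification $\widehat{\Omega}^1_A(B)\cong B\hat{\otimes}_A B$ is deduced from an unproven freeness claim. This is the analytic heart of the paper and it does not follow formally: the paper never constructs $\widehat{\Omega}^1_A(B)$ by universal property in the non-unital setting, but instead defines the maps $j$ and $m$ explicitly on a twisted bimodule $S_\alpha$ (using the operator $T(F)(x)=\alpha_x(F(x))$ to make $j$ a two-sided $B$-module map), identifies $S_\alpha\hat{\otimes}_A S_\alpha\cong\mathscr{S}(\mathbb{R}^2,A)$ using self-inducedness, and then exhibits explicit continuous sections $\rho_x,\rho_y,\beta_x,\beta_y$ whose construction rests on a vector-valued, Fourier-dual Hadamard lemma (a function with vanishing integrals along a direction is a directional derivative within the Schwartz class). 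Exactness and admissibility are verified by the identities $\beta_x\circ\iota=\mathrm{Id}$ and $\iota\circ\beta_x+\rho_x\circ\pi=\mathrm{Id}$. Without some version of this analytic input your resolution is not established, so the proposal as written is a plausible outline rather than a proof.
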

	
	\section{Preliminaries}
	\subsection{Notation}
	\textbf{Remark.} All algebras in this paper are defined over the field of complex numbers and assumed to be associative. Unlike in the paper \cite{2017arXiv171206177K}, here we allow the algebras are to be \textit{non-unital}.
	
	\begin{definition}
		A \textit{Fr\'echet} space is a complete metrizable locally convex space.
	\end{definition}
	
	Let us introduce some notation (see \cite{Helem1986} and \cite{PirkVdB} for more details). Denote by \textbf{LCS}, \textbf{Fr} the categories of complete locally convex spaces, Fr\'echet spaces, respectively. Also we will denote the category of vector spaces by $\textbf{Lin}$.
	
	For a locally convex Hausdorff space $E$ we will denote its completion by $\tilde{E}$. Also for locally convex Hausdorff spaces $E, F$ the notation $E \hat{\otimes} F$ denotes the completed projective tensor product of $E, F$.
	
	By $A_+$ we will denote the unitization of an algebra $A$. By $A^{\text{op}}$ we denote the opposite algebra.
	
	%
	
	\begin{definition}
		A complete locally convex algebra with jointly continuous multiplication is called a $\hat{\otimes}$\textit{-algebra}.
	\end{definition} 
	A $\hat{\otimes}$-algebra with the underlying locally compact space which is a Fr\'echet space is called a \textit{Fr\'echet algebra}.
	
	\begin{definition}
		A locally convex algebra $A$ is called $m$-convex if the topology on it can be defined by a family of submultiplicative seminorms.
	\end{definition}
	
	\begin{definition}
		A complete locally $m$-convex algebra is called an \textit{Arens-Michael algebra}.
	\end{definition}

	\begin{definition}
		Let $A$ be a $\hat{\otimes}$-algebra and let $M$ be a complete locally convex space which is also a left $A$-module. Also suppose that the natural map $A \times M \rightarrow M$ is jointly continuous. Then we will call $M$ a left $A$-$\hat{\otimes}$\textit{-module}. In a similar fashion we define right $A$-$\hat{\otimes}$\textit{-modules} and $A$-$B$-$\hat{\otimes}$\textit{-bimodules}.
		
		A $\hat{\otimes}$-module over a Fr\'echet algebra which is itself a Fr\'echet space is called a Fr\'echet $A$-$\hat{\otimes}$-module.
	\end{definition} 
	For arbitrary $\hat{\otimes}$-algebras $A, B$ we denote
	\[
	\begin{aligned}
	A\text{-}\textbf{mod} &= \text{the category of left } A\text{-}\hat{\otimes}\text{-modules}, \\
	\textbf{mod}\text{-}A &= \text{the category of right } A\text{-}\hat{\otimes}\text{-modules}, \\
	A\text{-}\textbf{mod}\text{-}B &=  \text{the category of } A\text{-}B\text{-}\hat{\otimes}\text{-bimodules}. \\
	\end{aligned}
	\]
	
	For unital $\hat{\otimes}$-algebras $A, B$ we denote
	\[
	\begin{aligned}
	A\text{-}\textbf{unmod} &= \text{the category of unital left } A\text{-}\hat{\otimes}\text{-modules}, \\
	\textbf{unmod}\text{-}A &= \text{the category of unital right } A\text{-}\hat{\otimes}\text{-modules}, \\
	A\text{-}\textbf{unmod}\text{-}B &=  \text{the category of unital } A\text{-}B\text{-}\hat{\otimes}\text{-bimodules}. \\
	\end{aligned}
	\]
	
	
	Let $A$ be a $\hat{\otimes}$-algebra, and consider a complex of $A$-$\hat{\otimes}$-modules:
	$$
	\dots \xrightarrow{d_{n+1}} M_{n+1} \xrightarrow{d_n} M_n \xrightarrow{d_{n-1}} M_{n-1} \xrightarrow{d_{n-2}} \dots,
	$$
	then we will denote this complex by $\{M, d\}$.
	
	\begin{definition}
		Let $A$ be a $\hat{\otimes}$-algebra and consider a left $A$-$\hat{\otimes}$-module $Y$ and a right $A$-$\hat{\otimes}$-module $X$.
		\begin{enumerate}[label=(\arabic*)]
			\item A bilinear map $f : X \times Y \longrightarrow Z$, where $Z \in \textbf{LCS}$, is called $A$-balanced if $f(x \circ a, y) = f(x, a \circ y)$ for every $x \in X, y \in Y, a \in A$.
			\item A pair $(X \hat{\otimes}_A Y, i)$, where $X \hat{\otimes}_A Y \in \textbf{LCS}$, and $i : X \times Y \longrightarrow X \hat{\otimes}_A Y$ is a continuous $A$-balanced map, is called the \textit{completed projective tensor product of} $X$ and $Y$, if for every $Z \in \textbf{LCS}$ and  continuous $A$-balanced map $f : X \times Y \longrightarrow Z$ there exists a unique continuous linear map $\tilde{f} : X \hat{\otimes}_A Y \longrightarrow Z$ such that $f = \tilde{f} \circ i$.
		\end{enumerate}
	\end{definition}

	\subsection{Projectivity and homological dimensions} 
	The following definitions shall be given in the case of left modules; the definitions in the cases of right modules and bimodules are similar, just use the following category isomorphisms:
	for unital $A, B$ we have
	\[
	\begin{aligned}
	\textbf{unmod}\text{-}A &\simeq A^{\text{op}}\text{-}\textbf{unmod} & A\text{-}\textbf{unmod}\text{-}B &\simeq (A \hat{\otimes} B^{\text{op}}) \text{-}\textbf{unmod}
	\end{aligned}
	\]

	\noindent
	Let $A$ be a unital $\hat{\otimes}$-algebra.
	
	\begin{definition}
		A complex of $A$-$\hat{\otimes}$-modules $\{M, d\}$ is called \textit{admissible} $\iff$ it splits in the category $\mathbf{LCS}$. A morphism of $A$-$\hat{\otimes}$-modules $f : X \rightarrow Y$ is called \textit{admissible} if it is one of the morphisms in an admissible complex.
	\end{definition}
	
	\begin{definition}
		An additive functor $F : A\text{-}\textbf{mod} \rightarrow \textbf{Lin}$ is called  \textit{exact} $\iff$ for every admissible complex $\{M, d\}$ the corresponding complex $\{F(M), F(d)\}$ in $\textbf{Lin}$ is exact.
	\end{definition}
	
	\begin{definition}
		\label{unitaldef}
		Suppose that $A$ and $B$ are unital $\hat{\otimes}$-algebras.
		\begin{enumerate}[label=(\arabic*)]
			\item A module $P \in A\text{-}\textbf{unmod}$ is called \textit{projective} $\iff$ the functor $\text{Hom}_A(P, -)$ is exact. 
			\item A module $X \in A\text{-}\textbf{unmod}$ is called \textit{free} $\iff X$ is isomorphic to $A \hat{\otimes} E$ for some $E \in \mathbf{LCS}$.
		\end{enumerate}
	\end{definition}

	Now we consider the general, non-unital case. Let $A$ be a $\hat{\otimes}$-algebra. Any left $\hat{\otimes}$-module over an algebra $A$ can be viewed as a unital $\hat{\otimes}$-module over $A_+$, in other words, the following isomorphism of categories takes place:
	\[
	A\text{-}\textbf{mod} \cong A_+\text{-}\textbf{unmod}, \quad A\text{-}\textbf{mod}\text{-}B \cong A_+ \hat{\otimes} B_+^{\text{op}}\text{-}\textbf{unmod}.
	\]
	By using this isomorphism we can define projective and free modules in the non-unital case.
	\begin{definition}
		\label{nonunitaldef}
		Suppose that $A$ and $B$ are $\hat{\otimes}$-algebras.
		\indent
		\begin{enumerate}[label=(\arabic*)]
			\item A module $P \in A\text{-}\textbf{mod}$ is called \textit{projective} $\iff$ the module $P$ is projective in the category $A_+\text{-}\textbf{unmod}$ 
			\item A module $X \in A\text{-}\textbf{mod}$ is called \textit{free} $\iff X$ is isomorphic to $A_+ \hat{\otimes} E$ for some $E \in \mathbf{LCS}$.
		\end{enumerate}
	\end{definition}
    As it turns out, there is no ambiguity, a unital module is projective in the sense of the Definition \ref{unitaldef} if and only if it is projective in the sense of the Definition \ref{nonunitaldef}.
	\begin{definition}
		Let $X \in A\text{-}\textbf{mod}$. Suppose that $X$ can be included in a following admissible complex:
		$$
		0 \leftarrow X \xleftarrow{\varepsilon} P_0 \xleftarrow{d_0} P_1 \xleftarrow{d_1} \dots  \xleftarrow{d_{n-1}} P_n \leftarrow 0 \leftarrow 0 \leftarrow \dots,
		$$
		where every $P_i$ is a projective module. Then we will call the complex $\{P, d\}$, where $$\{P, d\} = 0 \leftarrow P_0 \xleftarrow{d_0} P_1 \xleftarrow{d_1} \dots \xleftarrow{d_{n-1}} P_n \leftarrow 0,$$ the \textit{projective resolution} of $X$ of \textit{length} $n$. By definition, the length of an unbounded resolution equals $\infty$.
	\end{definition}
	
	This allows us to define the notion of a derived functor in the topological case, for example, see \cite[ch 3.3]{Helem1986}. In particular, $\text{Ext}_A^k(M, N)$ and $\text{Tor}^A_k(M, N)$ are defined similarly to the purely algebraic situation.
	
	\begin{definition}
		Consider an arbitrary module $M \in A\text{-}\textbf{mod}$. Then following number is well-defined:
		\[
		\begin{aligned}
		\text{dh}_A(M) &  = \min\{ n \in \mathbb{Z}_{\ge 0} : \text{Ext}^{n+1}_A (M, N) = 0 \text{ for every } N \in A\text{-}\textbf{mod} \} = \\ & = \{\text{the length of a shortest projective resolution of }M\} \in \{-\infty\} \cup [0, \infty].
		\end{aligned}
		\]
		It is called the \textit{projective (homological) dimension} of $M$. 
	\end{definition}
	
	\begin{definition}
		Let $A$ be a $\hat{\otimes}$-algebra. Then we can define the following invariants of $A$:
		\[
		\text{dgl}(A) = \sup \{ \text{dh}_A(M) : M \in A\text{-}\textbf{mod}\} - \textit{the left global dimension of } A.
		\]
		\[
		\text{dgr}(A) = \sup \{ \text{dh}_A(M) : M \in \textbf{mod}\text{-}A\} - \textit{the right global dimension of } A.
		\]
	\end{definition}

	\subsection{Algebra of rapidly decreasing functions}
	Recall the definition of the space of rapidly decreasing functions on $\mathbb{R}^n$.
	\begin{definition}
		For $n > 0$ define the Fr\'echet space
		\[
		\mathscr{S}(\mathbb{R}^n) := \{ f : \mathbb{R}^n \rightarrow \mathbb{C} : \left\| f \right\|_{k,l} = \sup_{x \in \mathbb{R}^n} | x^k D^l(f) | < \infty \text{ for all } k, l \in \mathbb{Z}_{\ge 0}^n \},
		\]
		where $x^k = x_1^{k_1} \dots x_n^{k_n}$ and $D^l(f) = \frac{\partial^{l_1}}{\partial x_1^{l_1}} \dots \frac{\partial^{l_n}}{\partial x_n^{l_n}} f$. The topology on $\mathscr{S}(\mathbb{R}^n)$ is defined by the system $\{ \left\| f \right\|_{k,l} : k, l \in \mathbb{Z}_{\ge 0}^n \}$.
	\end{definition}

	There are two natural ways to define the multiplication on $\mathscr{S}(\mathbb{R}^n)$:
	\[
	(f \cdot g)(x) = f(x) g(x) \quad \text{(pointwise product)}
	\]
	\[
	(f * g)(x) = \int_{\mathbb{R}^n} f(y) g(x - y) \text{d} y. \quad \text{(convolution product)}
	\]
	
	The following theorem is well-known.
	
	\begin{theorem}
		Fix $n \in \mathbb{N}$.
		\begin{enumerate}[label=(\arabic*)]
			\item $(\mathscr{S}(\mathbb{R}^n), \cdot)$ is a Fr\'echet-Arens-Michael algebra.
			\item The Fourier transform induces an isomorphism of Arens-Michael algebras
			\[
			\mathcal{F}_n : (\mathscr{S}(\mathbb{R}^n), \cdot) \longrightarrow (\mathscr{S}(\mathbb{R}^n), *),
			\]
			\[
			\mathcal{F}_n(f)(x) = \int_{\mathbb{R}^n} f(y) e^{-2\pi i \left\langle x, y \right\rangle} \text{d}y_1 \dots \text{d}y_n
			\]
		\end{enumerate}
	\end{theorem}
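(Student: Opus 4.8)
The plan is to handle the two parts separately, since (1) concerns the pointwise algebra directly while (2) transports the resulting structure to the convolution algebra through $\mathcal{F}_n$. For part (1) the completeness and metrizability of $\mathscr{S}(\mathbb{R}^n)$ under the countable family $\{\|\cdot\|_{k,l}\}$ are classical, so the only substantive content is exhibiting an equivalent family of \emph{submultiplicative} seminorms; this simultaneously delivers joint continuity of the pointwise product (hence the $\hat{\otimes}$-algebra structure) and local $m$-convexity (hence the Arens--Michael property). First I would replace the monomial weights $x^k$ by the isotropic weights $(1+|x|^2)^N$ and set
\[
\rho_N(f) = \sup_{x\in\mathbb{R}^n}\ \sup_{|l|\le N}\ (1+|x|^2)^N\,|D^l f(x)|.
\]
Since $(1+|x|^2)^N$ both dominates and is dominated, up to constants, by the monomials $x^k$ with $|k|\le 2N$, the family $\{\rho_N\}_{N\ge 0}$ defines the Schwartz topology. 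Applying the Leibniz rule $D^l(fg)=\sum_{m\le l}\binom{l}{m}D^m f\,D^{l-m}g$ and estimating the first factor of each term by $\rho_N(f)$ and the second by $\rho_N(g)$ (absorbing the missing weight using $(1+|x|^2)^N\ge 1$) yields $\rho_N(fg)\le 2^N\rho_N(f)\rho_N(g)$, where $2^N$ bounds $\sum_{m\le l}\binom{l}{m}=2^{|l|}$. Rescaling to $p_N := 2^N\rho_N$ then produces a genuinely submultiplicative, topology-defining family, which completes part (1).

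For part (2) I would first recall the classical Schwartz theorem that $\mathcal{F}_n$ is a linear topological automorphism of $\mathscr{S}(\mathbb{R}^n)$: continuity in both directions follows from the intertwining relations $\widehat{D^l f}=(2\pi i\,x)^l\hat f$ and $\widehat{x^k f}=\bigl(\tfrac{i}{2\pi}\bigr)^{|k|}D^k\hat f$, which convert each seminorm estimate for $\hat f$ into one for $f$ and vice versa, while bijectivity is the Fourier inversion formula. The algebraic heart is then the convolution identity $\mathcal{F}_n(f\cdot g)=\mathcal{F}_n(f)*\mathcal{F}_n(g)$. The cleanest route is to first establish the dual identity $\widehat{f*g}=\hat f\cdot\hat g$ by a direct Fubini interchange---legitimate because rapid decrease makes every integral absolutely convergent---and then deduce the required identity by applying Fourier inversion (equivalently, using $\mathcal{F}_n^2 = $ reflection together with injectivity of $\mathcal{F}_n$). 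The normalization $e^{-2\pi i\langle x,y\rangle}$ is precisely the one that eliminates any spurious constant, so $\mathcal{F}_n$ is a continuous algebra isomorphism with continuous inverse.

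To phrase the conclusion as an isomorphism of \emph{Arens--Michael} algebras, I would transport the submultiplicative seminorms $p_N$ from part (1) through $\mathcal{F}_n^{-1}$: the seminorms $p_N\circ\mathcal{F}_n^{-1}$ are submultiplicative on $(\mathscr{S}(\mathbb{R}^n),*)$ and define its topology, so $(\mathscr{S}(\mathbb{R}^n),*)$ is itself Fr\'echet--Arens--Michael and $\mathcal{F}_n$ is an isomorphism in the relevant category. I do not expect a genuine obstacle, as the statement is classical and the two analytic facts I rely on (Schwartz-space invariance of $\mathcal{F}_n$ and the convolution theorem) are standard and can be cited if a self-contained treatment is unnecessary. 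The only step demanding real care is the submultiplicativity bookkeeping in part (1): one must track the constant $2^N$ through the Leibniz estimate and confirm that the positive rescaling $p_N=2^N\rho_N$ leaves the topology unchanged.
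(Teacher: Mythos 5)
Your proposal is correct, and it follows essentially the same route as the paper, which proves this theorem purely by citation: part (1) by analogy with the standard submultiplicative-seminorm argument for $C^\infty(\mathbb{R}^n)$, and part (2) by appeal to the classical Fourier-analytic facts in Folland (invariance of $\mathscr{S}(\mathbb{R}^n)$ under $\mathcal{F}_n$ and the convolution/product theorems) that you reprove. Your write-up simply fills in the details the paper delegates to references --- including the correct bookkeeping $\rho_N(fg)\le 2^N\rho_N(f)\rho_N(g)$ with the rescaling $p_N=2^N\rho_N$, and the transport of the submultiplicative family through $\mathcal{F}_n^{-1}$ --- and all of these steps check out.
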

	\begin{proof}
		\begin{enumerate}[label=(\arabic*)]
			\item The proof is very similar to the proof that $C^\infty(\mathbb{R}^n)$ is a Fr\'echet-Arens-Michael algebra, which can be found in \cite[Section 4.4.(2)]{book:338950}.
			\item See \cite[Theorem 8.22, Corollary 8.28]{book:5272} for the proof.
		\end{enumerate}
	\end{proof}

	From now on we will write $\mathscr{S}(\mathbb{R}^n)$ instead of $(\mathscr{S}(\mathbb{R}^n), \cdot)$ and $\mathscr{S}(\mathbb{R}^n)_{\text{conv}}$ instead of $(\mathscr{S}(\mathbb{R}^n), *)$.
	

	

\subsection{\texorpdfstring{$\widehat{\Omega}^1$}{Omega1}-like admissible sequences for \texorpdfstring{$\mathscr{S}(\mathbb{R})$}{SR}}
In order to obtain the homological dimensions of $\mathscr{S}(\mathbb{R}^n)$ in the paper \cite{ogneva1984homological}, Helemskii and Ogneva used a simple and natural $\widehat{\Omega}^1$-like admissible sequence for $\mathscr{S}(\mathbb{R})$. It was constructed using Hadamard's lemma.
	\begin{lemma}[Hadamard's lemma]
		\label{Hadamard's lemma}
		Let $f \in \mathscr{S}(\mathbb{R}^n)$, such that $f(0, x_2, \dots, x_n) = 0$ for all \\ $(x_2, \dots, x_n) \in \mathbb{R}^{n-1}$. Then there exists a function $g \in \mathscr{S}(\mathbb{R}^n)$ such that
		\[
		f(x_1, \dots, x_n) = x_1 g(x_1, \dots, x_n).
		\]
		More generally, suppose that $f(x) = 0$ on a hyperplane in $\mathbb{R}^n$ defined by the equation $a_1 x_1 + \dots + a_n x_n = 0$. Then there exists $g \in \mathscr{S}(\mathbb{R}^n)$ such that
		\[
		f(x_1, \dots, x_n) = (a_1 x_1 + \dots + a_n x_n) g(x_1, \dots, x_n).
		\]
	\end{lemma}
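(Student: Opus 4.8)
The plan is to establish the first statement directly by an integral (Hadamard) representation and then reduce the general hyperplane case to it by a linear change of coordinates. Assuming $f(0, x_2, \dots, x_n) = 0$, I would apply the fundamental theorem of calculus along the segment joining $(0, x_2, \dots, x_n)$ to $(x_1, x_2, \dots, x_n)$ to obtain
\[
f(x) = f(x) - f(0, x_2, \dots, x_n) = x_1 \int_0^1 (\partial_1 f)(t x_1, x_2, \dots, x_n) \, \mathrm{d}t,
\]
and define $g$ to be the integral on the right. Equivalently, off the hyperplane $\{x_1 = 0\}$ one simply has $g = f/x_1$; the two descriptions agree and each is convenient in a different region. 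Smoothness of $g$ is then routine: since $f$ and all its derivatives are bounded, differentiation under the integral sign is justified and yields
\[
(D^l g)(x) = \int_0^1 t^{l_1} (\partial_1^{l_1 + 1} \partial_2^{l_2} \cdots \partial_n^{l_n} f)(t x_1, x_2, \dots, x_n) \, \mathrm{d}t,
\]
so $g \in C^\infty(\mathbb{R}^n)$.

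The substantive point, and the main obstacle, is verifying that every seminorm $\left\| g \right\|_{k,l}$ is finite, i.e.\ that $g$ and all its derivatives decay rapidly. The integral formula alone is awkward here, because for large $|x_1|$ the integrand samples $\partial_1 f$ near the origin, where nothing forces smallness. I would therefore estimate $\sup_x |x^k D^l g|$ by splitting $\mathbb{R}^n$ into the regions $\{|x_1| \le 1\}$ and $\{|x_1| \ge 1\}$. On $\{|x_1| \le 1\}$ the factor $x_1^{k_1}$ is bounded by $1$, and moving the remaining monomial $x_2^{k_2} \cdots x_n^{k_n}$ inside the integral above bounds the integrand pointwise by a finite sum of Schwartz seminorms of $f$, the $t$-integral contributing only a bounded constant. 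On $\{|x_1| \ge 1\}$ I would instead use $g = f/x_1$: by the Leibniz rule $D^l(f/x_1)$ is a finite combination of terms $(D^{l'} f)\, x_1^{-m}$ with $m \ge 1$, each dominated by a derivative of $f$ since $|x_1| \ge 1$, and multiplying by $x^k$ keeps everything controlled by Schwartz seminorms of $f$. Taking the maximum over the two regions gives $\left\| g \right\|_{k,l} < \infty$ for all $k, l$, so $g \in \mathscr{S}(\mathbb{R}^n)$.

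Finally, for a general hyperplane $\{a_1 x_1 + \dots + a_n x_n = 0\}$ with $(a_1, \dots, a_n) \ne 0$, I would choose an invertible linear map $L$ whose first coordinate is $y_1 = a_1 x_1 + \dots + a_n x_n$, and set $\tilde f = f \circ L^{-1}$. Since precomposition with an invertible linear map preserves $\mathscr{S}(\mathbb{R}^n)$, the function $\tilde f$ is Schwartz and vanishes on $\{y_1 = 0\}$, so the first part supplies $\tilde g \in \mathscr{S}(\mathbb{R}^n)$ with $\tilde f(y) = y_1 \tilde g(y)$. Pulling back, $g(x) := \tilde g(Lx)$ lies in $\mathscr{S}(\mathbb{R}^n)$ and satisfies $f(x) = (a_1 x_1 + \dots + a_n x_n)\, g(x)$, as required.
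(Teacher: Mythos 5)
The paper states this lemma without proof, treating it as a classical fact borrowed from the Ogneva--Helemskii framework, so there is no argument of the author's to compare yours against. Your proof is correct and is the standard one: the integral representation $f(x) = x_1\int_0^1(\partial_1 f)(tx_1,x_2,\dots,x_n)\,\mathrm{d}t$ handles existence and smoothness of $g$, and your two-region estimate is exactly the right device for the Schwartz seminorms --- the integral formula controls the region $|x_1|\le 1$ (where the monomial $x_2^{k_2}\cdots x_n^{k_n}$ passes harmlessly under the integral because those coordinates are not rescaled), while $g=f/x_1$ controls $|x_1|\ge 1$ (where the Leibniz expansion produces only factors $x_1^{-m}$ with $m\ge 1$, bounded by $1$ there). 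The reduction of the hyperplane case to the coordinate case by an invertible linear change of variables is also standard and valid, since $\mathscr{S}(\mathbb{R}^n)$ is preserved under precomposition with invertible linear maps. No gaps.
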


		

	Recall that $\mathscr{S}(\mathbb{R}^2)$ admits the following structure of a $\mathscr{S}(\mathbb{R})$-$\hat{\otimes}$-bimodule:
	\[
	(\varphi \cdot f)(x, y) = \varphi(x)f(x, y), \quad (f \cdot \varphi)(x, y) = f(x, y) \varphi(y) 
	\]
	for any $\varphi \in \mathscr{S}(\mathbb{R})$, $f \in \mathscr{S}(\mathbb{R}^2)$, $x, y \in \mathbb{R}$.
	
	The Theorem 1.1 gives a similar $\mathscr{S}(\mathbb{R})_{\text{conv}}$-$\hat{\otimes}$-bimodule structure on $\mathscr{S}(\mathbb{R}^2)_{\text{conv}}$.
	
	\begin{proposition}[\cite{ogneva1984homological}, Proposition 3]
		The following diagram is commutative, moreover, the rows of the diagram are short exact sequences of $\mathscr{S}(\mathbb{R})$-$\hat{\otimes}$-bimodules which split in the categories $\mathscr{S}(\mathbb{R})$-\textbf{mod} and \textbf{mod}-$\mathscr{S}(\mathbb{R})$:
		\begin{equation}
			\label{exactseq1}
			\begin{tikzcd}
			0 \arrow[r] &  \mathscr{S}(\mathbb{R}^2) \arrow[r, "j"] & \mathscr{S}(\mathbb{R}^2) \arrow[r,  "\pi"] & \mathscr{S}(\mathbb{R}) \arrow[r]  & 0 \\
			0 \arrow[r] &  \mathscr{S}(\mathbb{R}) \hat{\otimes} \mathscr{S}(\mathbb{R}) \arrow[r, "k"] \isoarrow{u}  & \mathscr{S}(\mathbb{R}) \hat{\otimes} \mathscr{S}(\mathbb{R}) \isoarrow{u}  \arrow[r, "m"] & \mathscr{S}(\mathbb{R}) \arrow[r] \arrow[u, "\text{Id}"] & 0
			\end{tikzcd}
		\end{equation}
		where
		\[
		\begin{aligned}
			j(f)(x, y) &= (x - y) f(x, y) \quad & \text{for all } f \in \mathscr{S}(\mathbb{R}^2), \\
			\pi(f)(x) &= f(x, x) \quad & \text{for all } f \in \mathscr{S}(\mathbb{R}^2) \\
			k(f \otimes g) &= fx \otimes g - f \otimes gx \quad & \text{for all } f \in \mathscr{S}(\mathbb{R}^2), \\
			m(f \otimes g) &= fg \quad & \text{for all } f \in \mathscr{S}(\mathbb{R}^2).
		\end{aligned}
		\]
	\end{proposition}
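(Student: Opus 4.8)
The plan is to deduce every assertion from the canonical topological isomorphism $\mathscr{S}(\mathbb{R}) \hat{\otimes} \mathscr{S}(\mathbb{R}) \cong \mathscr{S}(\mathbb{R}^2)$, $f \otimes g \mapsto \big[(x,y) \mapsto f(x)g(y)\big]$, which furnishes the two nontrivial vertical arrows of \eqref{exactseq1}; once the top row is shown to be exact and split, the isomorphism transports all of this to the bottom row. First I would verify commutativity on elementary tensors. For the left square, applying the vertical isomorphism to $k(f \otimes g) = fx \otimes g - f \otimes gx$ produces the function $(x,y) \mapsto (x-y)f(x)g(y)$, which is exactly $j$ applied to the image $f(x)g(y)$ of $f \otimes g$; for the right square, $m(f\otimes g) = fg$ is sent to $(x,y)\mapsto f(x)g(x)$, and this is $\pi\big(f(x)g(y)\big)$. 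Both identities extend from elementary tensors by continuity and bilinearity. The same kind of computation, checked against the actions $(\varphi \cdot f)(x,y) = \varphi(x)f(x,y)$ and $(f \cdot \varphi)(x,y) = f(x,y)\varphi(y)$, shows that $j,\pi,k,m$ are $\mathscr{S}(\mathbb{R})$-bimodule morphisms, and this is immediate.

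Next I would establish exactness of the top row. Injectivity of $j$ is clear, since $(x-y)f(x,y)$ can vanish identically only if $f$ vanishes off the diagonal, hence everywhere by continuity. The inclusion $\operatorname{im} j \subseteq \ker \pi$ is obvious, as $(x-y)f(x,y)$ vanishes on the diagonal. The reverse inclusion $\ker \pi \subseteq \operatorname{im} j$ is the only substantial point, and I expect it to be the main obstacle: if $f(x,x)=0$ for all $x$, then $f$ vanishes on the hyperplane $x-y=0$, so the generalized form of Hadamard's lemma (Lemma \ref{Hadamard's lemma} with $a_1 = 1$, $a_2 = -1$) produces $g \in \mathscr{S}(\mathbb{R}^2)$ with $f = (x-y)g = j(g)$. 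This is precisely where the division of a Schwartz function by a linear factor must be controlled inside the Schwartz class; everything else in the argument is formal.

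To obtain surjectivity of $\pi$ together with the splitting, I would exhibit explicit continuous module sections rather than argue abstractly. Fixing $\phi \in \mathscr{S}(\mathbb{R})$ with $\phi(0)=1$, define $s(g)(x,y) = g(x)\phi(x-y)$ and $s'(g)(x,y) = g(y)\phi(x-y)$. Both are continuous linear maps $\mathscr{S}(\mathbb{R}) \to \mathscr{S}(\mathbb{R}^2)$, the rapid decay in each variable being inherited from the factors $g$ and $\phi$, and both satisfy $\pi \circ s = \pi \circ s' = \mathrm{Id}$, which in particular yields surjectivity of $\pi$. A direct check against the two actions shows that $s$ respects the left action and $s'$ respects the right action, so $s$ is a morphism in $\mathscr{S}(\mathbb{R})$-\textbf{mod} and $s'$ is a morphism in \textbf{mod}-$\mathscr{S}(\mathbb{R})$; hence the top row splits in each of the two one-sided categories via the respective section.

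Finally, since all spaces in sight are Fréchet and $\pi$ admits a continuous section, the open mapping theorem guarantees that $j$ is a topological embedding onto $\ker \pi = \operatorname{im} j$, so the top row is a genuine short exact sequence of Fréchet bimodules. Transporting $s$ and $s'$ through the vertical isomorphism gives corresponding sections for the bottom row, so the bottom row is likewise exact and splits in both $\mathscr{S}(\mathbb{R})$-\textbf{mod} and \textbf{mod}-$\mathscr{S}(\mathbb{R})$, which completes the argument.
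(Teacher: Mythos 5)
Your argument is correct and follows essentially the route the paper attributes to Ogneva--Helemskii (and reuses itself in Section 2): transport everything through $\mathscr{S}(\mathbb{R})\hat{\otimes}\mathscr{S}(\mathbb{R})\cong\mathscr{S}(\mathbb{R}^2)$, obtain $\ker\pi\subseteq\operatorname{im}j$ from Hadamard's lemma (Lemma \ref{Hadamard's lemma} with $a_1=1$, $a_2=-1$), and split the rows with explicit one-sided module sections of $\pi$, exactly the pattern of $\rho_x,\rho_y$ in Lemma \ref{lemma2.5}. The paper states this proposition with only a citation, so there is no in-text proof to diverge from; your use of the open mapping theorem in place of an explicit continuous retraction of $j$ is a legitimate and slightly cleaner way to finish.
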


	Let us restate the above proposition for $\mathscr{S}(\mathbb{R})_{\text{conv}}$. First of all, we will formulate a lemma which can be considered as the ``Fourier dual'' to Hadamard's lemma.
	
	\begin{lemma}
		Let $f \in \mathscr{S}(\mathbb{R}^n)$ such that $\int_{\mathbb{R}} f(t, x_2, \dots, x_n) \text{d}t = 0$ for any $(x_2, \dots, x_n) \in \mathbb{R}^{n-1}.$ Then there exists a function $g \in \mathscr{S}(\mathbb{R}^n)$ satisfying
		\[
		f(x_1, \dots, x_n) = \frac{\partial}{\partial x_1} g(x_1, \dots, x_n).
		\]
		More generally, if there is a vector $v = (v_1, \dots, v_n) \in \mathbb{R}^n$ such that the integral $\int_{\mathbb{R}} f(x + tv) \text{d} t = 0$ for any $x \in \mathbb{R}^n$, then there exists a function $g \in \mathscr{S}(\mathbb{R}^n)$ satisfying
		\[
		f(x_1, \dots, x_n) = \left( \sum_{i = 1}^n v_i \frac{\partial}{\partial x_i} \right) g(x_1, \dots, x_n).
		\]
	\end{lemma}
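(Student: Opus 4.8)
The plan is to treat the special direction $v = e_1 = (1, 0, \dots, 0)$ first and then reduce the general case to it by a linear change of coordinates. For the special case, the only reasonable candidate for $g$ is the partial antiderivative in the first variable,
\[
g(x_1, \dots, x_n) = \int_{-\infty}^{x_1} f(t, x_2, \dots, x_n)\, \mathrm{d}t,
\]
which manifestly satisfies $\partial g / \partial x_1 = f$ by the fundamental theorem of calculus, and whose integral converges absolutely since $f$ is rapidly decreasing. The entire content of the lemma is therefore to verify that $g \in \mathscr{S}(\mathbb{R}^n)$. The key structural observation is that the hypothesis $\int_{\mathbb{R}} f(t, x')\,\mathrm{d}t = 0$ (writing $x' = (x_2, \dots, x_n)$) gives the second representation
\[
g(x_1, x') = -\int_{x_1}^{\infty} f(t, x')\,\mathrm{d}t,
\]
so that $g$ is a tail integral of $f$ from either end; this is exactly what will force the decay of $g$ as $x_1 \to \pm\infty$.

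For the seminorm estimates I would first reduce to the case where no derivative is taken in the first variable. Indeed, if a multi-index $l$ has $l_1 \geq 1$ then $D^l g = D^{l - e_1} f \in \mathscr{S}(\mathbb{R}^n)$, so its seminorms are finite; while for $l_1 = 0$, differentiating under the integral sign gives $D^l g(x_1, x') = \int_{-\infty}^{x_1}(D^l f)(t, x')\,\mathrm{d}t$, and the function $h := D^l f \in \mathscr{S}(\mathbb{R}^n)$ again satisfies $\int_{\mathbb{R}} h(t, x')\,\mathrm{d}t = D^l \int_{\mathbb{R}} f(t, x')\,\mathrm{d}t = 0$. Thus it suffices to bound $\sup_x \left| x^k \int_{-\infty}^{x_1} h(t, x')\,\mathrm{d}t \right|$ for such an $h$, and here both tail representations are available. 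Splitting according to the sign of $x_1$ and using $|x_1|^{k_1} \leq |t|^{k_1}$ on the relevant range of $t$, one obtains in either case
\[
\left| x^k \int_{-\infty}^{x_1} h(t, x')\,\mathrm{d}t \right| \leq \int_{\mathbb{R}} |t|^{k_1}\, |(x')^{k'} h(t, x')|\,\mathrm{d}t, \qquad k' = (k_2, \dots, k_n).
\]
Inserting the factor $(1 + t^2)^{-1}(1 + t^2)$ and pulling the supremum out of the integral bounds the right-hand side by $\pi\,\bigl(\|f\|_{(k_1, k'), l} + \|f\|_{(k_1 + 2, k'), l}\bigr)$, since $\int_{\mathbb{R}}(1 + t^2)^{-1}\,\mathrm{d}t = \pi$. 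Hence every seminorm of $g$ is dominated by finitely many seminorms of $f$, so $g \in \mathscr{S}(\mathbb{R}^n)$.

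For the general direction $v \neq 0$, I would choose any $T \in GL_n(\mathbb{R})$ with $T e_1 = v$ and pass to $\tilde f := f \circ T$, which lies in $\mathscr{S}(\mathbb{R}^n)$ because the Schwartz space is invariant under invertible linear substitutions. The substitution $x = Ty$ shows $\int_{\mathbb{R}} \tilde f(y + t e_1)\,\mathrm{d}t = \int_{\mathbb{R}} f(Ty + tv)\,\mathrm{d}t = 0$, so $\tilde f$ meets the hypothesis of the special case and there is $\tilde g \in \mathscr{S}(\mathbb{R}^n)$ with $\tilde f = \partial \tilde g / \partial y_1$. Setting $g := \tilde g \circ T^{-1} \in \mathscr{S}(\mathbb{R}^n)$ and applying the chain rule to $\tilde g = g \circ T$ gives $\partial \tilde g / \partial y_1 = \bigl(\sum_{i=1}^n v_i\, \partial g / \partial x_i\bigr) \circ T$, which together with $\tilde f = f \circ T$ yields $f = \sum_{i=1}^n v_i\, \partial g / \partial x_i$ after cancelling the bijection $T$. (If $v = 0$ the hypothesis forces $f \equiv 0$, and $g = 0$ works.)

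The main obstacle is the seminorm bound above, and more precisely the decay of $g$ as $x_1 \to +\infty$: without the vanishing-integral hypothesis the antiderivative $g$ would tend to the nonzero constant $\int_{\mathbb{R}} f(t, x')\,\mathrm{d}t$ and fail to be Schwartz, so it is exactly here that the hypothesis is consumed. As a consistency check, this lemma is the Fourier transform of Hadamard's Lemma~\ref{Hadamard's lemma}: the vanishing-integral condition says that the partial Fourier transform of $f$ vanishes on the hyperplane $\{\xi_1 = 0\}$, and dividing by $\xi_1$ there corresponds to the antiderivative. One could therefore alternatively run the whole argument on the Fourier side, applying the hyperplane version of Lemma~\ref{Hadamard's lemma} to $\langle v, \xi\rangle$ and inverting.
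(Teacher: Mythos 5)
Your argument is correct. The paper actually states this lemma without any proof, positioning it immediately after the Fourier-transform theorem and after Hadamard's Lemma~\ref{Hadamard's lemma} and calling it the ``Fourier dual'' of the latter; the intended justification is evidently the one you relegate to your final consistency check, namely that $\mathcal{F}$ carries the vanishing-integral condition to vanishing of $\hat f$ on the hyperplane $\langle v,\xi\rangle=0$, Hadamard's lemma produces $\hat f = \langle v,\xi\rangle\,\hat g$, and inverting the transform turns multiplication by $2\pi i\langle v,\xi\rangle$ into the directional derivative. Your main line of proof is a genuinely different, self-contained route: you construct $g$ directly as the partial antiderivative, use the hypothesis to get the two tail representations $g=\int_{-\infty}^{x_1}=-\int_{x_1}^{\infty}$, and derive the seminorm bounds by splitting on the sign of $x_1$ and inserting $(1+t^2)^{-1}(1+t^2)$; the reduction of the general direction to $e_1$ via $T\in GL_n(\mathbb{R})$ with $Te_1=v$ and the invariance of $\mathscr{S}(\mathbb{R}^n)$ under linear substitutions is clean, and the degenerate case $v=0$ is correctly disposed of. The direct approach buys an explicit formula for $g$ and explicit seminorm constants $\pi\bigl(\|f\|_{(k_1,k'),l}+\|f\|_{(k_1+2,k'),l}\bigr)$, hence continuity of the solution operator $f\mapsto g$, at the cost of the case analysis; the Fourier route is shorter on paper but only because it pushes all the analytic work into Hadamard's lemma. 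The only points worth making explicit in a written-up version are the justification for differentiating $\int_{\mathbb{R}}f(t,x')\,\mathrm{d}t=0$ under the integral sign in $x'$ (dominated convergence from the uniform rapid decay) and the smoothness of $g$ needed before writing $D^l g = D^{l-e_1}f$; both are routine.
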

	
	\begin{proposition}
		The following diagram is commutative, moreover, the rows of the diagram are short exact sequences of $\mathscr{S}(\mathbb{R})_{\text{conv}}$-$\hat{\otimes}$-bimodules which split in the categories $\mathscr{S}(\mathbb{R})_{\text{conv}}$-\textbf{mod} and \textbf{mod}-$\mathscr{S}(\mathbb{R})_{\text{conv}}$:
		\begin{equation}
		\label{exactseqconv1}
		\begin{tikzcd}
		0 \arrow[r] &  \mathscr{S}(\mathbb{R}^2)_{\text{conv}} \arrow[r, "j"] & \mathscr{S}(\mathbb{R}^2)_{\text{conv}} \arrow[r,  "\pi"] & \mathscr{S}(\mathbb{R})_{\text{conv}} \arrow[r]  & 0 \\
		0 \arrow[r] &  \mathscr{S}(\mathbb{R})_{\text{conv}} \hat{\otimes} \mathscr{S}(\mathbb{R})_{\text{conv}} \arrow[r, "k"] \isoarrow{u} & \mathscr{S}(\mathbb{R})_{\text{conv}} \hat{\otimes} \mathscr{S}(\mathbb{R})_{\text{conv}} \arrow[r, "m"] \isoarrow{u} & \mathscr{S}(\mathbb{R})_{\text{conv}} \arrow[r] \arrow[u, "\text{Id}"] & 0
		\end{tikzcd}
		\end{equation}
		where
		\[
		\begin{aligned}
		j(f)(x, y) &= \left(\frac{\partial}{\partial x} - \frac{\partial}{\partial y} \right) f(x, y) \quad & \text{for all } f \in \mathscr{S}(\mathbb{R}^2), \\
		\pi(f)(x) &= \int_{\mathbb{R}} f(y, x-y) \text{d} y \quad & \text{for all } f \in \mathscr{S}(\mathbb{R}^2). \\
		k(f \otimes g) &= f' \otimes g - f \otimes g' \quad & \text{for all } f \in \mathscr{S}(\mathbb{R}^2), \\
		m(f \otimes g) &= f*g \quad & \text{for all } f \in \mathscr{S}(\mathbb{R}^2).
		\end{aligned}
		\]
	\end{proposition}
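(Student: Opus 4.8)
The plan is to deduce this proposition from the already-established pointwise version \eqref{exactseq1} by transporting the entire diagram through the Fourier transform, rather than reproving exactness and splitting from scratch; this is what the word ``restate'' above is meant to suggest. The key structural input is Theorem 1.1: $\mathcal{F}_1 : \mathscr{S}(\mathbb{R}) \to \mathscr{S}(\mathbb{R})_{\text{conv}}$ is an isomorphism of $\hat{\otimes}$-algebras, and, since the Fourier transform on a product factorizes, $\mathcal{F}_2 = \mathcal{F}_1 \hat{\otimes} \mathcal{F}_1$ under the canonical identification $\mathscr{S}(\mathbb{R}^2) \cong \mathscr{S}(\mathbb{R}) \hat{\otimes} \mathscr{S}(\mathbb{R})$ (legitimate since all spaces involved are nuclear Fr\'echet spaces). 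First I would assemble a vertical family of Fourier maps from diagram \eqref{exactseq1} to diagram \eqref{exactseqconv1}: $\mathcal{F}_2$ on the two copies of $\mathscr{S}(\mathbb{R}^2)$ in the top rows, $\mathcal{F}_1 \hat{\otimes} \mathcal{F}_1$ on the two tensor-product terms in the bottom rows, and $\mathcal{F}_1$ on the right-hand columns. Each of these is a topological isomorphism.

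The core of the argument is then to verify that this family of Fourier maps intertwines all the structure maps, which it suffices to check on elementary tensors $f \otimes g$ since these span a dense subspace. Using the two transform identities $\mathcal{F}_1(f \cdot g) = \mathcal{F}_1(f) * \mathcal{F}_1(g)$ (Theorem 1.1) and $\mathcal{F}_1(xf) = \tfrac{i}{2\pi}(\mathcal{F}_1 f)'$, a direct computation gives the following. The maps $m$ and $\pi$ are intertwined exactly: indeed $\pi_{\text{old}}(f \otimes g)(x) = f(x)g(x)$, whose Fourier transform is $\mathcal{F}_1 f * \mathcal{F}_1 g = \pi(\mathcal{F}_1 f \otimes \mathcal{F}_1 g)$, so the formula $\pi(f)(x) = \int_{\mathbb{R}} f(y,x-y)\,dy$ is precisely what $\mathcal{F}_2$ turns the diagonal restriction $f \mapsto f(x,x)$ into. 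The maps $j$ and $k$ are intertwined up to the common nonzero scalar $\tfrac{i}{2\pi}$, since multiplication by the coordinate becomes differentiation. I would also check that $\mathcal{F}$ carries the pointwise bimodule structure $(\varphi \cdot f)(x,y) = \varphi(x)f(x,y)$ to the convolution bimodule structure on $\mathscr{S}(\mathbb{R}^2)_{\text{conv}}$ --- this is exactly the ``similar structure'' furnished by Theorem 1.1 mentioned before the statement --- so that the Fourier maps are morphisms of $\mathscr{S}(\mathbb{R})_{\text{conv}}$-bimodules.

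Once the intertwining is in place, the conclusion is formal. The scalar $\tfrac{i}{2\pi}$ appearing in the $j$- and $k$-squares can be absorbed by rescaling the left vertical isomorphism, or simply ignored, since multiplying a map in a short exact sequence by a nonzero scalar changes neither exactness nor splitting. As $\mathcal{F}_1$ is an isomorphism of $\hat{\otimes}$-algebras, it induces isomorphisms of the module categories $\mathscr{S}(\mathbb{R})\text{-}\mathbf{mod} \cong \mathscr{S}(\mathbb{R})_{\text{conv}}\text{-}\mathbf{mod}$ and likewise for right modules and bimodules; under such an isomorphism the commutativity, the exactness, and the one-sided splitting of \eqref{exactseq1} transport verbatim to \eqref{exactseqconv1}.

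I expect the main obstacle to be purely bookkeeping: keeping the normalization of $\mathcal{F}_1$ consistent so that the convolution theorem holds with no constant while tracking the constant $\tfrac{i}{2\pi}$ in the differentiation identity, and confirming that $\mathcal{F}_2$ genuinely equals $\mathcal{F}_1 \hat{\otimes} \mathcal{F}_1$ and respects the canonical vertical identifications $\mathscr{S}(\mathbb{R}) \hat{\otimes} \mathscr{S}(\mathbb{R}) \cong \mathscr{S}(\mathbb{R}^2)$. As an alternative that avoids transport altogether, one can prove exactness of the top row of \eqref{exactseqconv1} directly: the preceding lemma (the Fourier dual of Hadamard's lemma) applied to the direction $v = (1,-1)$ shows that $h \in \ker \pi$, i.e.\ $\int_{\mathbb{R}} h(y,x-y)\,dy \equiv 0$, holds if and only if $h = (\partial_x - \partial_y)\tilde{h}$ for some $\tilde{h} \in \mathscr{S}(\mathbb{R}^2)$, which is exactly $\operatorname{im} j$; injectivity of $j$ and surjectivity of $\pi$ are then straightforward, and the bottom row follows through the vertical identifications.
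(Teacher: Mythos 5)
Your proposal is correct and follows exactly the route the paper intends: the paper gives no explicit proof, presenting the proposition as the Fourier-transported ``restatement'' of the pointwise version \eqref{exactseq1}, with the dual Hadamard lemma supplied as the transported key tool, and your transport argument (including the harmless scalar $\tfrac{i}{2\pi}$ in the $j$/$k$ squares and the exact intertwining of $m$ and $\pi$ via the convolution theorem) fills in precisely the omitted details.
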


	In the next section we will show that the diagram \ref{exactseqconv1} can be generalized if we replace $\mathscr{S}(\mathbb{R})$ with smooth crossed products of Fr\'echet-Arens-Michael algebras by $\mathbb{R}$ and $\mathbb{T}$.

\section{\texorpdfstring{$\widehat{\Omega}^1$}{Omega1}-like admissible sequences for smooth crossed products}
\subsection{Smooth m-tempered actions and smooth crossed products}

\begin{definition}
	Let $E$ be a Hausdorff topological vector space. For a function $f : \mathbb{R}^n \rightarrow E$ and $x \in \mathbb{R}$ we denote
	\[
	\frac{\partial f}{\partial x_i}(x) := \lim_{h \rightarrow 0} \frac{f(x_1, \dots, x_i + h, \dots, x_n) - f(x_1, \dots, x_i, \dots, x_n)}{h}.
	\]
\end{definition}

\begin{definition}
	Let $X$ be a Fr\'echet space with topology, generated by a sequence of seminorms $\{ \left\| \cdot \right\|_m : m \in \mathbb{N}\}$.\\
	(1) The space $\mathscr{S}(\mathbb{T}^n, X) := C^\infty(\mathbb{T}^n, X)$ is a Fr\'echet space with respect to the system $$\left\lbrace \left\| f \right\|_{k, m} = \sup\limits_{x \in \mathbb{T}^n} \left\| D^k(f)(x) \right\|_m : k \in \mathbb{Z}^n_{\ge 0}, m \in \mathbb{N} \right\rbrace.$$
	(2) Define the following space:
	\[
	\mathscr{S}(\mathbb{R}^n, X) = \left\lbrace f : \mathbb{R}^n \rightarrow X : \left\| f \right\|_{k, l, m} := \sup_{x \in \mathbb{R}^n} \left\| x^l D^k(f)(x) \right\|_m < \infty \text{ for all } k, l \in \mathbb{Z}_{\ge 0}^n, m \in \mathbb{N}\right\rbrace,
	\]
	where $D^k(f) = \frac{\partial^{k_1}}{\partial x_i^{k_1}} \dots \frac{\partial^{k_n}}{\partial x_i^{k_n}}f$. (assuming $\mathbb{T} = \mathbb{R} / \mathbb{Z}$.) The topology on $\mathscr{S}(\mathbb{R}^n, X)$ is defined by the system $\{ \left\| f \right\|_{k, l, m} : k, l \in \mathbb{Z}_{\ge 0}^n, m \in \mathbb{N} \}$.
\end{definition}

The following proposition can be proven in the same way as in the \cite[Chapter 11.2]{book:338950}. 

\begin{proposition}
	Let $A$ be a Fr\'echet space. Then the natural maps
	\[
	\begin{aligned}
	\mathscr{S}(\mathbb{R}^n) \hat{\otimes} A \rightarrow \mathscr{S}(\mathbb{R}^n, A),& \quad f \otimes a \mapsto (x \mapsto f(x)a), \\ \mathscr{S}(\mathbb{T}^n) \hat{\otimes} A \rightarrow \mathscr{S}(\mathbb{T}^n, A),& \quad f \otimes a \mapsto (x \mapsto f(x)a).
	\end{aligned}
	\]
	are topological isomorphisms for $n \in \mathbb{N}$. As a corollary, we have
	\[
	\mathscr{S}(\mathbb{R}^m) \hat{\otimes} \mathscr{S}(\mathbb{R}^n) \cong \mathscr{S}(\mathbb{R}^m, \mathscr{S}(\mathbb{R}^n)) \cong \mathscr{S}(\mathbb{R}^{n+m}),
	\]
	\[
	\mathscr{S}(\mathbb{T}^m) \hat{\otimes} \mathscr{S}(\mathbb{T}^n) \cong \mathscr{S}(\mathbb{T}^m, \mathscr{S}(\mathbb{T}^n)) \cong \mathscr{S}(\mathbb{T}^{n+m}),
	\]
\end{proposition}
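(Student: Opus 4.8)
The plan is to follow the scalar-valued template: exhibit the algebraic tensor product $\mathscr{S}(\mathbb{R}^n) \otimes A$ as a dense subspace of $\mathscr{S}(\mathbb{R}^n, A)$ on which the projective tensor topology and the subspace topology coincide, and then pass to completions. Throughout, the decisive structural fact is that $\mathscr{S}(\mathbb{R}^n)$ and $\mathscr{S}(\mathbb{T}^n)$ are \emph{nuclear} Fr\'echet spaces; this is what forces the completed projective tensor product to agree with the space of vector-valued functions (a vector-valued form of Schwartz's kernel theorem). I treat the $\mathbb{R}^n$ case; the $\mathbb{T}^n$ case is identical and slightly easier.

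First I would check that the map is well defined and continuous. The bilinear map $\mathscr{S}(\mathbb{R}^n) \times A \to \mathscr{S}(\mathbb{R}^n, A)$ sending $(f, a)$ to $x \mapsto f(x)a$ satisfies
\[
\| f a \|_{k, l, m} = \sup_{x} \| x^l D^k(f)(x)\, a \|_m = \| f \|_{k, l}\, \| a \|_m ,
\]
so it is jointly continuous; by the universal property of $\hat{\otimes}$ it extends to a continuous linear map $\iota : \mathscr{S}(\mathbb{R}^n) \hat{\otimes} A \to \mathscr{S}(\mathbb{R}^n, A)$, and the displayed identity together with the definition of the projective tensor seminorms $p_{k,l,m}$ gives $\|\iota(u)\|_{k,l,m} \le p_{k,l,m}(u)$, i.e.\ $\iota$ does not increase the canonical seminorms.

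The core of the argument is to produce a continuous inverse. Here I would use a Schauder basis realization: the Hermite functions $\{h_\beta\}_{\beta \in \mathbb{Z}_{\ge 0}^n}$ form an absolute basis of $\mathscr{S}(\mathbb{R}^n)$ for which $f = \sum_\beta c_\beta(f) h_\beta$ with coefficients $c_\beta(f)$ of rapid decay, identifying $\mathscr{S}(\mathbb{R}^n)$ with the space $s$ of rapidly decreasing sequences. Applying the coefficient functionals in the vector-valued setting assigns to $F \in \mathscr{S}(\mathbb{R}^n, A)$ the family $c_\beta(F) \in A$, and the seminorm estimates for Hermite expansions (uniform in the $A$-seminorms) show that $\beta \mapsto c_\beta(F)$ lies in the space $s(A)$ of rapidly decreasing $A$-valued sequences and that $F \mapsto (c_\beta(F))_\beta$ is a topological isomorphism $\mathscr{S}(\mathbb{R}^n, A) \cong s(A)$. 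Nuclearity of $s$ gives $s \hat{\otimes} A \cong s(A)$, and under these identifications $\iota$ becomes the identity; in particular the algebraic tensor product (finite Hermite expansions) is dense and $\iota$ is a topological isomorphism. For $\mathbb{T}^n$ the same argument runs with the Fourier characters $\{e^{2\pi i \langle \beta, x\rangle}\}$ in place of the Hermite functions, which is more transparent because the coefficients are literally Fourier coefficients.

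The main obstacle is precisely this coincidence of topologies, equivalently the continuity of the inverse: the easy estimate $\|\iota(u)\|_{k,l,m} \le p_{k,l,m}(u)$ never uses nuclearity, whereas the reverse bound does, and for $\mathbb{R}^n$ one must control simultaneously the polynomial-growth factors $x^l$ and the derivatives $D^k$, which is what the rapid decay of Hermite coefficients encodes. Once the isomorphisms are in hand, the corollary is immediate: taking $A = \mathscr{S}(\mathbb{R}^n)$ gives $\mathscr{S}(\mathbb{R}^m) \hat{\otimes} \mathscr{S}(\mathbb{R}^n) \cong \mathscr{S}(\mathbb{R}^m, \mathscr{S}(\mathbb{R}^n))$, and the identification $\mathscr{S}(\mathbb{R}^m, \mathscr{S}(\mathbb{R}^n)) \cong \mathscr{S}(\mathbb{R}^{m+n})$ follows by unwinding the defining seminorms, since joint smoothness and rapid decay in $(x, y) \in \mathbb{R}^{m+n}$ is equivalent to the iterated condition; the $\mathbb{T}^n$ statement is handled the same way.
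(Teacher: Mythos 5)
Your argument is correct, but it is worth noting that the paper does not actually prove this proposition: it simply remarks that the statement ``can be proven in the same way as in'' the cited textbook treatment of $C^\infty(\mathbb{R}^n,A)$, so there is no in-paper proof to match step by step. Your route --- reduce $\mathscr{S}(\mathbb{R}^n)$ to the sequence space $s$ via the Hermite (resp.\ Fourier) absolute basis, identify $\mathscr{S}(\mathbb{R}^n,A)$ with $s(A)$ by vector-valued coefficient functionals, and invoke $s\,\hat{\otimes}\,A\cong s(A)$ --- is a standard and fully legitimate way to carry out that reduction, and it has the advantage of producing the continuous inverse and the density of the algebraic tensor product simultaneously, rather than appealing abstractly to the coincidence of the $\pi$- and $\varepsilon$-topologies for nuclear spaces. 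The one place where you are terser than the argument really requires is the vector-valued Hermite expansion itself: you should at least record that $c_\beta(F)=\int F(x)h_\beta(x)\,\mathrm{d}x$ exists as an $A$-valued integral, that the rapid decay of $\|c_\beta(F)\|_m$ comes from iterating the harmonic oscillator identity $c_\beta\bigl((-\Delta+|x|^2)^N F\bigr)=(2|\beta|+n)^N c_\beta(F)$ seminorm by seminorm, and that the partial sums converge to $F$ in every $\|\cdot\|_{k,l,m}$; these are routine transcriptions of the scalar proof but they are the actual content of the claimed isomorphism $\mathscr{S}(\mathbb{R}^n,A)\cong s(A)$. The final identification $\mathscr{S}(\mathbb{R}^m,\mathscr{S}(\mathbb{R}^n))\cong\mathscr{S}(\mathbb{R}^{m+n})$ by matching iterated and joint seminorms is fine as stated.
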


This proposition gives us another way to differentiate and integrate vector-valued Schwartz functions.

\begin{definition}
	Let $A$ be a Fr\'echet algebra. Then for $G = \mathbb{T}, \mathbb{R}$ we define the derivative \\ $\frac{d}{d x} : \mathscr{S}(G, A) \rightarrow \mathscr{S}(G, A)$ and the integral $\int_G : \mathscr{S}(G, A) \rightarrow A$ using the universal property of the completed projective tensor product:
	\[
	\frac{d}{dx}(f \otimes a) := \left( \frac{d}{dx}f(x) \right) \otimes a , \quad \int_G (f \otimes a) d\mu := \left(  \int_G f d \mu \right)  \otimes a.
	\]
\end{definition}

\begin{definition}
	\label{special actions on Frechet algebras}
	Let $A$ be a Fr\'echet-Arens-Michael algebra, and let $G = \mathbb{R}$ or $G = \mathbb{T}$. Then the action of $G$ on a $A$ is called:
	\begin{enumerate}[label=(\alph*)]
		\item \textit{$m$-tempered}, if there exists a generating family of submultiplicative seminorms $\{ \left\| \cdot \right\|_m\}_{m \in \mathbb{N}}$ on $A$ such that for every $m \in \mathbb{N}$ there is a polynomial $p_m(x) \in \mathbb{R}[x]$, satisfying
		\[
		\left\| \alpha_x(a) \right\|_m \le |p_m(x)| \left\| a \right\|_m \quad (a \in A, x \in G).
		\]
		
		\item \textit{$C^\infty$-$m$-tempered} or \textit{smooth $m$-tempered} , if the following conditions are satisfied:
		\begin{enumerate}[label=(\arabic*)]
			\item for every $a \in A$ the function
			\[
			\alpha_x(a) : G \longrightarrow A, \quad x \mapsto \alpha_x(a),
			\]
			is $C^\infty$-differentiable,
			\item there exists a generating family of submultiplicative seminorms $\{ \left\| \cdot \right\|_m\}_{m \in \mathbb{N}}$ on $A$ such that for any $k \ge 0$ and $m > 0$ there exists a polynomial $p_{k, m} \in\mathbb{R}[x]$, satisfying
			\[
			\left\| \alpha_x^{(k)}(a) \right\|_m \le |p_{k, m}(x)| \left\| a \right\|_m \quad (k \in \mathbb{N}, x \in G, a \in A).
			\]
		\end{enumerate}
	\end{enumerate}
\end{definition}

The following theorem can be viewed as a definition of smooth crossed products.

\begin{theorem}[\cite{schweitzer1993dense}, Theorem 3.1.7]
	\label{def of smooth ore extensions}
	Let $A$ be a Fr\'echet-Arens-Michael algebra with an \newline $m$-tempered action of one of the groups $G = \mathbb{R}$ or $G = \mathbb{T}$. Then the space $\mathscr{S}(G, A)$ endowed with the following multiplication:
	\[
	(f *_\alpha g)(x) = \int_G f(y) \alpha_y(g(x-y)) dy
	\]
	becomes a Fr\'echet-Arens-Michael algebra. 
\end{theorem}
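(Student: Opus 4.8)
The plan is to verify the four defining properties in turn: that $\mathscr{S}(G,A)$ is a Fréchet space, that $*_\alpha$ is well defined and lands in $\mathscr{S}(G,A)$, that it is associative, and that it is jointly continuous and carries a generating family of submultiplicative seminorms. The Fréchet structure is already supplied by the preceding proposition, which identifies $\mathscr{S}(G,A)$ with $\mathscr{S}(G)\hat{\otimes}A$ and equips it with the seminorms $\|\cdot\|_{k,l,m}$ (resp. $\|\cdot\|_{k,m}$ when $G=\mathbb{T}$), so only the algebraic and metric properties of $*_\alpha$ need work. Throughout I would fix the generating submultiplicative seminorms $\{\|\cdot\|_m\}$ on $A$ and the polynomials $p_m$ from the $m$-tempered hypothesis, and replace each $p_m$ by a submultiplicative majorant $\tilde p_m(x)=C_m(1+|x|)^{d_m}\ge|p_m(x)|$ with $C_m\ge 1$, so that $\tilde p_m(x+y)\le\tilde p_m(x)\tilde p_m(y)$ while $\|\alpha_x(a)\|_m\le\tilde p_m(x)\|a\|_m$ is retained.

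For well-definedness, note that for fixed $x$ the integrand $y\mapsto f(y)\,\alpha_y(g(x-y))$ is a continuous $A$-valued function with $\|f(y)\alpha_y(g(x-y))\|_m\le\|f(y)\|_m\,\tilde p_m(y)\,\|g(x-y)\|_m$, using submultiplicativity of $\|\cdot\|_m$ on $A$ and the $m$-tempered bound; since $f$ decays faster than any polynomial this is absolutely integrable, so the Bochner integral exists in $A$. Crucially, the variable $x$ enters only through $g(x-y)$, so smoothness of $f*_\alpha g$ in $x$ is inherited from $g$ and not from the action, which is precisely why the plain $m$-tempered hypothesis (without the $C^\infty$-condition) suffices. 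Differentiating under the integral, justified by the same dominating bound, yields $\frac{d}{dx}(f*_\alpha g)=f*_\alpha\frac{dg}{dx}$, while writing $x=(x-y)+y$ inside the integral and pulling the scalar through the linear map $\alpha_y$ gives $M(f*_\alpha g)=f*_\alpha(Mg)+(Mf)*_\alpha g$, where $M$ denotes multiplication by the variable. Iterating these two identities expresses each $\|f*_\alpha g\|_{k,l,m}$ as a finite sum of sup-norms of twisted convolutions of derivatives and polynomial multiples of $f$ and $g$, each finite by the integrable bound, so $f*_\alpha g\in\mathscr{S}(G,A)$.

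Associativity I would prove by the standard change of variables: expanding both $((f*_\alpha g)*_\alpha h)(x)$ and $(f*_\alpha(g*_\alpha h))(x)$ as double integrals, pulling $\alpha_y$ inside the inner integral (it is continuous and linear), and invoking the homomorphism property $\alpha_y\circ\alpha_w=\alpha_{y+w}$ together with Fubini's theorem (legitimate by absolute convergence), both sides collapse to $\int\!\int f(y)\,\alpha_y(g(w))\,\alpha_{y+w}(h(x-y-w))\,dw\,dy$.

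The main obstacle is local $m$-convexity, that is, producing honestly submultiplicative generating seminorms. The natural weighted estimate, obtained from the bound above via the substitution $z=x-y$ and $\tilde p_m(x)\le\tilde p_m(y)\tilde p_m(z)$, reads $\int_G\tilde p_m(x)\|(f*_\alpha g)(x)\|_m\,dx\le\bigl(\int_G\tilde p_m(y)^2\|f(y)\|_m\,dy\bigr)\bigl(\int_G\tilde p_m(z)\|g(z)\|_m\,dz\bigr)$. This gives joint continuity but is \emph{not} submultiplicative: the twist always loads an extra factor $\tilde p_m$ onto the left argument, and for $G=\mathbb{R}$ one sees there can be no single weight $W$ with $W(y+z)\tilde p_m(y)\le W(y)W(z)$, since taking $z=0$ would force $W(0)\ge\tilde p_m(y)$ for all $y$. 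I would circumvent this through the left regular representation: completing $\mathscr{S}(G,A)$ in the weighted Sobolev norms above produces Banach spaces on which $f\mapsto(h\mapsto f*_\alpha h)$ is, by associativity, an algebra homomorphism into bounded operators, so the operator seminorms $\|f\|^{\mathrm{op}}_{m,n}:=\sup\{\|f*_\alpha h\|:\|h\|\le 1\}$ are automatically submultiplicative and finite by the displayed estimate; that this family recovers the original Fréchet topology is the delicate remaining point, to be handled with an approximate identity for $*_\alpha$. For $G=\mathbb{T}$ the difficulty evaporates, since compactness of $\mathbb{T}$ makes $\tilde p_m$ bounded by a constant $B_m$, the weighted $L^1$-Sobolev seminorms $\sum_{k\le n}\int_{\mathbb{T}}\|D^k f\|_m$ are already submultiplicative up to $B_m$, and rescaling renders them genuinely submultiplicative.
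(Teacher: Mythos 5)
You should first note that the paper offers no proof of this statement at all: it is imported verbatim from Schweitzer's Theorem 3.1.7, so there is no internal argument to compare against, and your proposal has to stand on its own. Most of it does. The well-definedness argument (the identities $D(f*_\alpha g)=f*_\alpha Dg$ and $M(f*_\alpha g)=(Mf)*_\alpha g+f*_\alpha(Mg)$ together with the pointwise bound $\|f(y)\alpha_y(g(x-y))\|_m\le\|f(y)\|_m\,\tilde p_m(y)\,\|g(x-y)\|_m$), the Fubini computation for associativity, the joint-continuity estimate, and the case $G=\mathbb{T}$ are all correct, and your observation that only the plain $m$-tempered hypothesis is needed because $x$ enters the integrand solely through $g(x-y)$ is exactly the right remark.

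The gap is where you flag it, and it is not a loose end but the entire content of the Arens--Michael claim for $G=\mathbb{R}$: you never produce a generating family of submultiplicative seminorms. Moreover, the repair you sketch faces a concrete obstruction. Since the action is only $m$-tempered and not assumed smooth, a derivative of $f*_\alpha h$ can only be thrown onto $h$; moving it onto $f$ by integration by parts would require differentiating $y\mapsto\alpha_y$, which you do not have. Consequently the Banach completions on which $L_f\colon h\mapsto f*_\alpha h$ acts boundedly must carry the derivative seminorms $\sum_{k\le N}\int\|D^k h(x)\|_m(1+|x|)^n\,dx$ with $N$ at least as large as the order you want to recover, and to deduce $\|f\|\lesssim\|L_f\|^{\mathrm{op}}$ from $f*_\alpha e_\lambda\to f$ you need an approximate identity bounded in exactly those norms. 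A mollifier family $e_\lambda(x)=\lambda\varphi(\lambda x)$ has $\int\|D^k e_\lambda\|\sim\lambda^k\to\infty$ for $k\ge 1$, and the usual rescue $f*D^k e_\lambda=D^k f* e_\lambda$ is again unavailable for the twisted product. So the ``delicate remaining point'' is not merely delicate: as set up, the argument does not close, and it is precisely at this step that you would have to consult the cited source, which attacks the seminorms on $\mathscr{S}(\mathbb{R},A)$ directly rather than through the left regular representation. Until that step is supplied, your proposal proves that $\mathscr{S}(\mathbb{R},A;\alpha)$ is a Fr\'echet algebra with jointly continuous multiplication, but not that it is Arens--Michael.
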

When $G = \mathbb{R}$, we will denote this algebra by $\mathscr{S}(\mathbb{R}, A; \alpha)$, and in the case $G = \mathbb{T}$ we will write $C^\infty(\mathbb{T}, A; \alpha)$.

\textbf{Remark.} If $\alpha$ is the trivial action, then $\mathscr{S}(G, A; \alpha) = \mathscr{S}(G, A)$ with the usual convolution product.

\begin{proposition}
	\label{criteria for smooth act}
	Let $A$ be a Fr\'echet-Arens-Michael algebra. Consider an action $\alpha : \mathbb{R} \rightarrow \text{Aut}(A)$. Then $\alpha$ is a smooth $m$-tempered action if and only if the following holds:
	\begin{enumerate}
		\item the derivative $\alpha'_x(a)$ exists at $x = 0$ for every $a \in A$, and, as a corollary, derivatives all of orders at zero exist.
		\item there exists a generating family of submultiplicative seminorms $\{ \left\| \cdot \right\|_m\}_{m \in \mathbb{N}}$ on $A$ such that for every $m \in \mathbb{N}$ and $k \in \mathbb{N}$ there exist polynomials $p_m(x) \in \mathbb{R}[x]$ and $C_{k,m} > 0$, satisfying
		\[
		\left\| \alpha_x(a) \right\|_m \le |p_m (x)| \left\| a \right\|_m, \quad \left\| \alpha_0^{(k)}(a) \right\|_m \le C_{k, m} \left\| a \right\|_m  
		\]
		for every $a \in A$, $x \in \mathbb{R}$.
	\end{enumerate}
\end{proposition}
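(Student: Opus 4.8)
The plan is to prove the two implications separately. The forward direction is essentially a specialization of Definition \ref{special actions on Frechet algebras}(b), while the reverse direction rests on a single structural identity coming from the fact that $\alpha$ is a group homomorphism.

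For the forward implication (smooth $m$-tempered $\Rightarrow$ (1),(2)) I would read off the conclusions directly. Smoothness of $x \mapsto \alpha_x(a)$ gives in particular the existence of $\alpha'_0(a)$, which is condition (1). Specializing the tempered estimate $\|\alpha_x^{(k)}(a)\|_m \le |p_{k,m}(x)|\,\|a\|_m$ to $k=0$ produces the polynomial $p_m := p_{0,m}$ controlling $\|\alpha_x(a)\|_m$, and specializing to $x=0$ produces the constants $C_{k,m} := |p_{k,m}(0)|$ (replaced by any positive number should this vanish) controlling $\|\alpha_0^{(k)}(a)\|_m$. This gives condition (2).

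The substantial direction is the converse, and the key observation, to be established first, is the identity
\[
\alpha_x'(a) = \alpha_x\big(\alpha_0'(a)\big) \qquad (a \in A,\ x \in \mathbb{R}).
\]
Writing $\delta := \alpha_0'$, which is a well-defined linear map $A \to A$ by condition (1) together with completeness of $A$, this follows from the homomorphism property $\alpha_{x+h} = \alpha_x \circ \alpha_h$ by factoring the difference quotient as $\alpha_x\big(\tfrac{\alpha_h(a)-a}{h}\big)$ and passing the limit $h \to 0$ through the continuous map $\alpha_x$; continuity of $\alpha_x$ for fixed $x$ is itself a consequence of the first estimate in condition (2). Iterating the identity — applying it to the element $\delta(a) \in A$, whose derivative at zero is again supplied by condition (1) — yields by induction that $x \mapsto \alpha_x(a)$ is $C^\infty$ with
\[
\alpha_x^{(k)}(a) = \alpha_x\big(\delta^k(a)\big), \qquad \text{in particular} \qquad \alpha_0^{(k)}(a) = \delta^k(a).
\]
This simultaneously proves smoothness (part (1) of Definition \ref{special actions on Frechet algebras}(b)) and the ``corollary'' in condition (1) that all higher derivatives at zero exist.

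It then remains to recover the tempered estimates. For every $k$ and $m$, combining the displayed identity with the two bounds of condition (2) gives
\[
\|\alpha_x^{(k)}(a)\|_m = \|\alpha_x(\delta^k(a))\|_m \le |p_m(x)|\,\|\delta^k(a)\|_m \le C_{k,m}\,|p_m(x)|\,\|a\|_m,
\]
so the polynomial $p_{k,m} := C_{k,m}\,p_m$ witnesses part (2) of Definition \ref{special actions on Frechet algebras}(b). I expect the main obstacle to be the careful justification of the limit-exchange step in $\alpha'_x(a) = \alpha_x(\alpha'_0(a))$ and of the inductive passage to higher derivatives in the Fréchet-space-valued setting: the essential point is that differentiability of $x \mapsto \alpha_x(b)$ at \emph{every} point (not merely at zero) must be deduced from differentiability at zero for all $b \in A$, which is exactly what the homomorphism identity provides.
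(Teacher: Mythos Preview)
Your proposal is correct and matches the paper's proof essentially line for line: the forward direction is the same specialization $p_m = p_{0,m}$, $C_{k,m} = p_{k,m}(0)$, and the converse hinges on the identity $\alpha'_x(a) = \alpha_x(\alpha'_0(a))$ obtained from $\alpha_{x+h} = \alpha_x \circ \alpha_h$, iterated to $\alpha_x^{(k)}(a) = \alpha_x(\alpha_0^{(k)}(a))$, followed by the estimate $\|\alpha_x^{(k)}(a)\|_m \le |p_m(x)|\,C_{k,m}\,\|a\|_m$. The only cosmetic difference is your introduction of the notation $\delta = \alpha'_0$ and your explicit remark that continuity of $\alpha_x$ (from the first bound in (2)) is what justifies passing the limit through.
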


\begin{proof}
	$(\Rightarrow)$ If $\alpha$ is $C^\infty$-$m$-tempered, choose the seminorms $\left\| \cdot \right\|_m$ and the polynomials $p_{m, k}(x)$ as in the Definition \ref{special actions on Frechet algebras}, and set
	\[
	p_m(x) = p_{0, m}(x), \quad C_{k, m} = p_{k, m}(0).
	\]
	
	$(\Leftarrow)$ Notice that
	\begin{equation}
	\label{derivation}
		\alpha'_x(a) = \lim_{h \rightarrow 0} \frac{\alpha_{x+h}(a) - \alpha_x(a)}{ h } = \alpha_x \left( \lim_{h \rightarrow 0} \frac{\alpha_{h}(a) - a}{h} \right) = \alpha_x (\alpha'_0(a)) \quad (a \in A).
	\end{equation}
	Therefore,
	\[
	\alpha^{(k)}_x(a) = \alpha^{(k-1)}_x(\alpha'_0(a)) = \alpha^{(k-2)}_x(\alpha'_0(\alpha'_0(a))) =  \dots = \alpha_x(\underbrace{\alpha'_0(\dots (\alpha'_0(a)))}_{k \text{ times}}).
	\]
	However,
	\[
	\alpha'_0(\alpha'_0(x)) = \lim_{h \rightarrow 0} \frac{\alpha_h(\alpha'_0(x)) - \alpha'_0(x)}{h} = \lim_{h \rightarrow 0} \frac{\alpha'_h(x) - \alpha'_0(x)}{h} = \alpha''_0(x).
	\]
	By induction we obtain the following equality:
	\begin{equation}
	\label{derivative of alpha}
		\alpha^{(k)}_x(a) = \alpha_x(\alpha^{(k)}_0(a))
	\end{equation}
	for every $a \in A$, $x \in \mathbb{R}$, $k \in \mathbb{Z}_{\ge 0}$.

	As an immediate corollary, $\alpha_x(a) \in C^\infty(\mathbb{R}, A)$ for every $a \in A$. This also implies that
	\[
	\left\| \alpha^{(k)}_x(a) \right\|_m = \left\| \alpha_x( \alpha^{(k)}_0(x) ) \right\|_m \le \left| p_m(x) \right| \left\| \alpha_0^{(k)}(a) \right\|_m \le \left| p_m(x) \right| C_{k, m} \left\| a \right\|_m.
	\] 
	Now set $p_{k, m}(x) = C_{k, m} p_m(x)$.
\end{proof}

The proposition can be restated for $G = \mathbb{T}$:

\begin{proposition}
	Let $A$ be a Fr\'echet-Arens-Michael algebra. Consider an action $\alpha : \mathbb{T} \rightarrow \text{Aut}(A)$. Then $\alpha$ is a smooth $m$-tempered action if and only if the following holds:
	\begin{enumerate}
		\item the derivative $\alpha'_x(a)$ exists at $x = 0$ for every $a \in A$, and, as a corollary, derivatives all of orders at zero exist.
		\item there exists a generating family of submultiplicative seminorms $\{ \left\| \cdot \right\|_m\}_{m \in \mathbb{N}}$ on $A$ such that for every $m \in \mathbb{N}$ and $k \in \mathbb{N}$ there exist $C_m, C_{k,m} > 0$, satisfying
		\[
		\left\| \alpha_x(a) \right\|_m \le C_m \left\| a \right\|_m, \quad \left\| \alpha_0^{(k)}(a) \right\|_m \le C_{k, m} \left\| a \right\|_m  
		\]
		for every $a \in A$, $x \in \mathbb{T}$.
	\end{enumerate}
\end{proposition}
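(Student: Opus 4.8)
The plan is to mirror the proof of Proposition \ref{criteria for smooth act} almost verbatim; the only genuinely new ingredient is the compactness of $\mathbb{T}$, which is precisely what allows the polynomial growth bound of Definition \ref{special actions on Frechet algebras} to collapse to a constant bound. First I would record that, regarding $\mathbb{T} = \mathbb{R}/\mathbb{Z}$, the map $\alpha : \mathbb{T} \to \text{Aut}(A)$ is still a group homomorphism, so the composition law $\alpha_{x+h} = \alpha_x \circ \alpha_h$ holds. Consequently the chain of identities culminating in \eqref{derivative of alpha}, namely $\alpha^{(k)}_x(a) = \alpha_x(\alpha^{(k)}_0(a))$, carries over unchanged, since the computation in \eqref{derivation} used only the group law together with the existence of the derivative at $0$.

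For the forward direction $(\Rightarrow)$, suppose $\alpha$ is smooth $m$-tempered in the sense of Definition \ref{special actions on Frechet algebras}, and pick the submultiplicative seminorms $\left\| \cdot \right\|_m$ and polynomials $p_{k,m}$ furnished there. Since $\mathbb{T}$ is compact and each $p_{k,m}$ is continuous, I would set
\[
C_m = \sup_{x \in \mathbb{T}} |p_{0,m}(x)|, \qquad C_{k,m} = |p_{k,m}(0)|,
\]
both of which are finite. Specializing the defining inequality $\left\| \alpha^{(k)}_x(a) \right\|_m \le |p_{k,m}(x)| \left\| a \right\|_m$ to $k = 0$ and to $x = 0$ respectively yields the two required estimates, while condition (1) is immediate from the smoothness clause of Definition \ref{special actions on Frechet algebras}.

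For the converse $(\Leftarrow)$, I would argue exactly as in the case $G = \mathbb{R}$. The identity \eqref{derivative of alpha} shows that $x \mapsto \alpha_x(a)$ is $C^\infty$ and reduces the estimate on the $k$-th derivative to the bounds at $x = 0$:
\[
\left\| \alpha^{(k)}_x(a) \right\|_m = \left\| \alpha_x(\alpha^{(k)}_0(a)) \right\|_m \le C_m \left\| \alpha^{(k)}_0(a) \right\|_m \le C_m C_{k,m} \left\| a \right\|_m,
\]
so one may take the constant polynomial $p_{k,m}(x) = C_m C_{k,m}$ in Definition \ref{special actions on Frechet algebras}. I expect no real obstacle: the sole conceptual point is that compactness of $\mathbb{T}$ turns polynomial tempering into plain boundedness, and that the group-theoretic identity \eqref{derivative of alpha} is insensitive to whether the parameter group is $\mathbb{R}$ or $\mathbb{T}$.
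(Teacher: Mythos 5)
Your proposal is correct and follows essentially the same route as the paper: the author likewise reduces to the proof of Proposition \ref{criteria for smooth act}, observing only that $|p_{k,m}(x)| \le \sup_{x \in \mathbb{T}} |p_{k,m}(x)| < \infty$, so that compactness of $\mathbb{T}$ turns the polynomial tempering bounds into constant bounds. Your write-up merely makes explicit the choice of constants and the reuse of the identity \eqref{derivative of alpha}, which the paper leaves implicit.
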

\begin{proof}
	The proof is the same as in the previous proposition, we only need keep in mind that 
	\[
	|p_{k, m}(x)| \le \sup_{x \in \mathbb{T}} |p_{k, m}(x)| < \infty. 
	\]
\end{proof}
\subsection{Explicit construction}

\textbf{Remark.} In this subsection we only treat the case $G = \mathbb{R}$ here, the case $G = \mathbb{T}$ can be dealt with in the same way.

\begin{definition}
	\label{self-induced}
	A $\hat{\otimes}$-algebra $A$ is called \textit{self-induced}, if the multiplication map $m : A \hat{\otimes}_A A \longrightarrow A$ is a $A$-$\hat{\otimes}$-bimodule isomorphism.
\end{definition}
Until the end of this section, $A$ will denote a self-induced Fr\'echet-Arens-Michael algebra. Also let $\alpha$ denote a smooth $m$-tempered $\mathbb{R}$-action of $A$.

In this subsection we will construct a $\hat{\Omega}^1_A$-like admissible sequence for $\mathscr{S}(\mathbb{R}, A ;\alpha)$.

\begin{proposition}
	\label{properties of the operator T}
	For any $F \in \mathscr{S}(\mathbb{R}, A)$ define $T(F)(x) = \alpha_x(F(x))$. Then the following statements hold:
	\begin{enumerate}
		\item The mapping $T$ is a well-defined continuous linear map $T : \mathscr{S}(\mathbb{R}, A) \rightarrow \mathscr{S}(\mathbb{R}, A)$,
		\item Moreover, $T$ is invertible, with the inverse, defined for every $F \in \mathscr{S}(\mathbb{R}, A)$ as follows:
		\[
		T^{-1}(F)(x) = \alpha_{-x}(F(x)).
		\]
		In particular, we have
		\begin{equation}
		\label{eq4}
			\left( T \circ \frac{\text{d}}{\text{d}x} \circ T^{-1}\right)(F)(x) = F'(x) -\alpha'_0(F(x)) 
		\end{equation}
		for any $F \in \mathscr{S}(\mathbb{R}, A)$.
		\item For any $F, G \in \mathscr{S}(\mathbb{R}, A ;\alpha)$ we have $$F' *_\alpha T(G) = F *_\alpha T(G').$$ This equality is equivalent to $$F' *_\alpha G = F *_\alpha \left( T \circ \frac{\text{d}}{\text{d}x} \circ T^{-1}\right)(G).$$
	\end{enumerate}
\end{proposition}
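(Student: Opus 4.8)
The plan is to prove all three statements by first verifying them on elementary tensors $F = f \otimes a$ with $f \in \mathscr{S}(\mathbb{R})$ and $a \in A$, where the formula reads $T(f\otimes a)(x) = f(x)\alpha_x(a)$, and then extending to all of $\mathscr{S}(\mathbb{R}, A) = \mathscr{S}(\mathbb{R})\hat{\otimes} A$ by continuity, using that the algebraic tensor product is dense and that evaluation $F \mapsto F(x)$ is continuous (so that any continuous extension automatically agrees with the pointwise formula). The two identities that make everything collapse are $\alpha'_x(a) = \alpha_x(\alpha'_0(a))$ from \eqref{derivation} and $\alpha_x^{(k)}(a) = \alpha_x(\alpha_0^{(k)}(a))$ from \eqref{derivative of alpha}, together with the submultiplicative bounds $\left\|\alpha_x(a)\right\|_m \le |p_m(x)|\,\left\|a\right\|_m$ and $\left\|\alpha_0^{(k)}(a)\right\|_m \le C_{k,m}\left\|a\right\|_m$ of Proposition \ref{criteria for smooth act}.

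For (1), linearity is immediate, so the content is that $T(F)$ again lies in $\mathscr{S}(\mathbb{R}, A)$ and that $T$ is continuous. On an elementary tensor the Leibniz rule together with \eqref{derivative of alpha} gives
\[
D^k\big(f(x)\alpha_x(a)\big) = \sum_{j=0}^k \binom{k}{j} f^{(j)}(x)\,\alpha_x\big(\alpha_0^{(k-j)}(a)\big).
\]
Applying $\left\|\cdot\right\|_m$ and the two bounds above yields $\left\|x^l D^k T(f\otimes a)(x)\right\|_m \le \sum_j \binom{k}{j}\, |x^l f^{(j)}(x)|\,|p_m(x)|\, C_{k-j,m}\,\left\|a\right\|_m$; since $x^l f^{(j)}(x)\,p_m(x)$ is a Schwartz function times a polynomial and hence bounded, every seminorm $\left\|T(f\otimes a)\right\|_{k,l,m}$ is finite and is dominated by finitely many Schwartz seminorms of $f$ times $\left\|a\right\|_m$. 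This shows $T$ maps the algebraic tensor product continuously into $\mathscr{S}(\mathbb{R}, A)$ and so extends continuously to the completion. I expect this estimate to be the main obstacle, since it is where the full strength of smooth $m$-temperedness is needed and where one must keep track of the interplay between the polynomial weights, the derivatives, and the seminorm index $m$.

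For (2), the candidate inverse $S(F)(x) = \alpha_{-x}(F(x))$ is well-defined and continuous by exactly the same argument (the bounds hold with $p_m(-x)$ in place of $p_m(x)$). Because $\alpha$ is a group homomorphism, $\alpha_{-x}\alpha_x = \alpha_0 = \mathrm{id}_A$, so $S\circ T = T\circ S = \mathrm{id}$ and $T^{-1} = S$. For \eqref{eq4} I again compute on $F = f\otimes a$: here $T^{-1}(F)(x) = f(x)\alpha_{-x}(a)$, and the chain rule together with \eqref{derivation} gives $\frac{d}{dx}\big(f(x)\alpha_{-x}(a)\big) = f'(x)\alpha_{-x}(a) - f(x)\alpha_{-x}(\alpha'_0(a))$; applying $T$ and using $\alpha_x\alpha_{-x} = \mathrm{id}$ collapses this to $f'(x)a - f(x)\alpha'_0(a) = F'(x) - \alpha'_0(F(x))$. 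As $T$, $\frac{d}{dx}$, $T^{-1}$ and $F \mapsto \alpha'_0\circ F$ (continuous since $\left\|\alpha'_0(a)\right\|_m \le C_{1,m}\left\|a\right\|_m$) are all continuous, the identity propagates to every $F$ by density.

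For (3), the key simplification is $\alpha_y\big(T(G)(x-y)\big) = \alpha_y\alpha_{x-y}(G(x-y)) = \alpha_x(G(x-y))$, which removes $T$ from the convolution. On $F = f\otimes a$, $G = g\otimes b$ both $F' *_\alpha T(G)$ and $F *_\alpha T(G')$ reduce to a scalar convolution multiplied by the fixed element $a\,\alpha_x(b)$, namely $(f'*g)(x)\,a\,\alpha_x(b)$ and $(f*g')(x)\,a\,\alpha_x(b)$, and the equality follows from the classical identity $f'*g = f*g' = (f*g)'$ for scalar Schwartz functions (equivalently, integrate the Leibniz derivative $\frac{d}{dy}\big[F(y)\alpha_x(G(x-y))\big]$ over $\mathbb{R}$ and note that the boundary terms vanish by rapid decay). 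Extending by bilinearity and the joint continuity of $*_\alpha$ from Theorem \ref{def of smooth ore extensions} gives the first equality for all $F, G$, and the second, equivalent form is obtained by substituting $G \mapsto T^{-1}(G)$: the relation $F' *_\alpha T(T^{-1}G) = F *_\alpha T\big((T^{-1}G)'\big)$ reads precisely $F' *_\alpha G = F *_\alpha (T\circ \frac{d}{dx}\circ T^{-1})(G)$.
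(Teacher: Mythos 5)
Your proposal is correct and follows essentially the same route as the paper: the same Leibniz-type formula combined with $\alpha_x^{(k)}(a)=\alpha_x(\alpha_0^{(k)}(a))$ and the tempered seminorm bounds for part (1), the same computation of $\frac{d}{dx}\alpha_{-x}(\cdot)$ for \eqref{eq4}, and the same collapse $\alpha_y(T(G)(x-y))=\alpha_x(G(x-y))$ followed by integration by parts for part (3). The only difference is presentational — you verify everything on elementary tensors and extend by density, while the paper computes directly with general $A$-valued Schwartz functions.
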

\begin{proof}
	\indent
	\begin{enumerate}
		\item Let us write down the derivative of $\alpha_x(F(x))$:
		\[
		\begin{aligned}
		\frac{\text{d}}{\text{d} x} (\alpha_x(F(x))) & = \lim_{h \rightarrow 0}  \frac{\alpha_{x+h}(F(x+h)) - \alpha_x(F(x))}{h} = \\ &= \alpha_x\left( \lim_{h \rightarrow 0} \frac{\alpha_h(F(x) + F'(x)h + o(h)) - F(x)}{h} \right) = \\
		&= \alpha_x \left( \alpha'_0(F(x)) + F'(x)\right) = \alpha'_x(F(x)) + \alpha_x(F'(x)). 
		\end{aligned}
		\]
		It is easily seen that
		\begin{equation}
			\frac{\text{d}^k}{\text{d} x^k} (\alpha_x(F(x))) = \sum_{i = 0}^k \binom{n}{k} \alpha^{(i)}_x(F^{(k-i)}(x)).
		\end{equation}
		Now fix a generating system of seminorms on $A$ which satisfies the conditions of the Proposition \ref{criteria for smooth act}. Let us show that $\alpha^{(m)}_x(F(x))$ lies in $\mathscr{S}(\mathbb{R}, A)$ for any $F \in \mathscr{S}(\mathbb{R}, A)$ and $m \ge 0$:
		\[
		\begin{aligned}
		&\left\| T(F) \right\|_{k, l, m} =  \sup_{x \in \mathbb{R}}\left\| x^l \frac{\text{d}^k}{\text{d} x^k} (\alpha_x(F(x))) \right\|_m \le \sum_{i = 0}^k \binom{k}{i} \sup_{x \in \mathbb{R}}\left\| \alpha^{(i)}_x(F^{(k-i)}(x))  \right\| \le \\ &\le \sum_{i = 0}^k \binom{k}{i} \sup_{x \in \mathbb{R}} \left(  |p_{i, m}(x)| \left\| F^{(k-i)}(x) \right\|_m \right) < \infty.
		\end{aligned}
		\]
		
		\item Notice that the same argument shows works for $T^{-1}$, as well. As for the equality, notice that 
		\begin{equation}
		\label{eq5}
			\frac{\text{d}}{\text{d} x} (\alpha_{-x}(F(x))) = -\alpha'_{-x}(F(x)) + \alpha_{-x}(F'(x)),
		\end{equation}
		so we have
		\[
		T\left( \frac{\text{d}}{\text{d} x} (\alpha_{-x}(F(x)))\right) = -\alpha_x(\alpha'_0(F(x))) + (F'(x)) \stackrel{\ref{derivative of alpha}}{=} -\alpha'_0(F(x)) + F'(x)
		\]
		\item This is equivalent to
		\[
		\begin{aligned}
		\int_{\mathbb{R}} F'(y) \alpha_y(TG(x-y)) \text{d} y &= \int_{\mathbb{R}} F(y) \alpha_y(TG'(x-y)) \text{d} y \Leftrightarrow \\
		\Leftrightarrow \int_{\mathbb{R}} F'(y) \alpha_x(G(x-y)) \text{d} y &= \int_{\mathbb{R}} F(y) \alpha_x(G'(x-y)) \text{d} y \Leftrightarrow \\
		\Leftrightarrow \int_{\mathbb{R}} \frac{\text{d}}{\text{d}y} (F(y)) \alpha_x(G(x-y)) \text{d} y &= - \int_{\mathbb{R}} F(y) \frac{\text{d}}{\text{d}y}(\alpha_x(G(x-y))) \text{d} y \Leftrightarrow\\
		\Leftrightarrow \int_{\mathbb{R}} \frac{\text{d}}{\text{d}y} (F(y)) \alpha_x(G(x-y)) \text{d} y &+ \int_{\mathbb{R}} F(y) \frac{\text{d}}{\text{d}y}(\alpha_x(G(x-y))) \text{d} y = 0 \quad \text{(integration by parts)}
		\end{aligned}
		\]
	\end{enumerate}
\end{proof}

Let $\mathscr{S}(\mathbb{R}, A; \alpha)_\alpha =: S_\alpha$ denote the $\mathscr{S}(\mathbb{R}, A; \alpha)$-$\hat{\otimes}$-bimodule and $A$-$\hat{\otimes}$-bimodule, which coincides with $\mathscr{S}(\mathbb{R}, A)$ as a LCS, and the bimodule actions are given below:
\[
\begin{aligned}
(F \circ a)(x) &= F(x) \alpha_x(a), & \quad a \circ F(x) &= a F(x) \quad & \text{ for any } &a \in A, F \in S_\alpha \\
(F \circ G)(x) &= (F *_\alpha G)(x), & \quad (G \circ F)(x) &= (G *_\alpha F)(x) \quad & \text{ for any } &F, G \in S_\alpha
\end{aligned}
\]

\begin{proposition}
	\indent
	\begin{enumerate}
		\item For any $F \in \mathscr{S}(\mathbb{R}, A)$ and $a \in A$ the functions $F \circ a$ and $a \circ F$ belong to $\mathscr{S}(\mathbb{R}, A)$. As a corollary, $S_\alpha$ is well-defined.
		\item The following equalities take place:
		\begin{equation}
		\label{eq1}
			(F \circ a)' = (F' \circ a) + (F \circ \alpha'_0(a)),
		\end{equation}
		\begin{equation}
		\label{eq2}
			\left( T \circ \frac{\text{d}}{\text{d}x} \circ T^{-1}\right) (F \circ a) = \left( \left( T \circ \frac{\text{d}}{\text{d}x} \circ T^{-1}\right) F\right)  \circ a,
		\end{equation}
		\begin{equation}
		\label{eq3}
			\left( T \circ \frac{\text{d}}{\text{d}x} \circ T^{-1}\right) (a \circ F) = a \circ \left( T \circ \frac{\text{d}}{\text{d}x} \circ T^{-1}\right)(F) - \alpha'_0(a) \circ F.
		\end{equation}
	\end{enumerate}
\end{proposition}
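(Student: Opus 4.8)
The plan is to reduce every identity to the explicit description of the operator $D := T \circ \frac{\text{d}}{\text{d}x} \circ T^{-1}$ furnished by \eqref{eq4}, namely $D(G)(x) = G'(x) - \alpha'_0(G(x))$, and to exploit two structural facts about the infinitesimal generator $\alpha'_0$. The first is that $\alpha'_0$ is a continuous derivation of $A$: this follows by differentiating the automorphism identity $\alpha_x(bc) = \alpha_x(b)\alpha_x(c)$ at $x = 0$ and invoking joint continuity of the multiplication to apply the product rule, giving $\alpha'_0(bc) = \alpha'_0(b)c + b\alpha'_0(c)$, while continuity is immediate from the bound $\left\| \alpha'_0(a) \right\|_m \le C_{1,m}\left\| a \right\|_m$ of Proposition \ref{criteria for smooth act}. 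The second is the commutation $\alpha'_0(\alpha_x(a)) = \alpha_x(\alpha'_0(a))$, which follows from the group law $\alpha_h \circ \alpha_x = \alpha_{h+x}$ together with \eqref{derivative of alpha}.

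For part 1, the action $a \circ F(x) = aF(x)$ is handled at once: submultiplicativity gives $\left\| x^l (aF)^{(k)}(x) \right\|_m = \left\| a\, x^l F^{(k)}(x) \right\|_m \le \left\| a \right\|_m \left\| x^l F^{(k)}(x) \right\|_m$, whose supremum over $x$ is $\left\| a \right\|_m \left\| F \right\|_{k,l,m} < \infty$. For $F \circ a(x) = F(x)\alpha_x(a)$ I would apply the Leibniz rule $\frac{\text{d}^k}{\text{d}x^k}(F(x)\alpha_x(a)) = \sum_{i=0}^k \binom{k}{i} F^{(k-i)}(x)\,\alpha_x^{(i)}(a)$ and estimate each summand exactly as in Proposition \ref{properties of the operator T}(1): submultiplicativity and the tempered bound $\left\| \alpha_x^{(i)}(a) \right\|_m \le |p_{i,m}(x)|\left\| a \right\|_m$ turn the term into $|p_{i,m}(x)|\,\left\| x^l F^{(k-i)}(x) \right\|_m \left\| a \right\|_m$, and the polynomial factor $p_{i,m}(x)$ is absorbed by the rapid decay of $F$ into a finite combination of Schwartz seminorms $\left\| F \right\|_{k-i,l',m}$. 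This shows $F \circ a \in \mathscr{S}(\mathbb{R},A)$ and hence that $S_\alpha$ is well defined.

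For part 2, equation \eqref{eq1} is a one-line Leibniz computation: $\frac{\text{d}}{\text{d}x}(F(x)\alpha_x(a)) = F'(x)\alpha_x(a) + F(x)\alpha'_x(a)$, and rewriting $\alpha'_x(a) = \alpha_x(\alpha'_0(a))$ via \eqref{derivative of alpha} identifies the two terms as $(F' \circ a)(x)$ and $(F \circ \alpha'_0(a))(x)$. For \eqref{eq2} I would expand $D(F \circ a)(x) = (F\circ a)'(x) - \alpha'_0((F \circ a)(x))$ using \eqref{eq4}, substitute \eqref{eq1} into the first term, and apply the derivation property of $\alpha'_0$ together with the commutation $\alpha'_0(\alpha_x(a)) = \alpha_x(\alpha'_0(a))$ to the second; the two copies of $F(x)\alpha_x(\alpha'_0(a))$ cancel, leaving $(F'(x) - \alpha'_0(F(x)))\alpha_x(a) = ((DF)\circ a)(x)$. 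Equation \eqref{eq3} is even shorter, since $(a \circ F)'(x) = aF'(x)$ and $\alpha'_0(aF(x)) = \alpha'_0(a)F(x) + a\alpha'_0(F(x))$ by the derivation property; collecting terms gives $a(F'(x) - \alpha'_0(F(x))) - \alpha'_0(a)F(x) = (a \circ DF)(x) - (\alpha'_0(a)\circ F)(x)$.

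Once the two structural facts about $\alpha'_0$ are in place, the computations in part 2 are purely formal, so the only step requiring genuine care is part 1: checking that the polynomial growth coming from the $m$-temperedness of $\alpha$ is uniformly dominated by the rapid decay of the Schwartz functions across all the seminorms $\left\| \cdot \right\|_{k,l,m}$. Since this is precisely the mechanism already verified in Proposition \ref{properties of the operator T}, I would cite that estimate rather than reproduce it, treating it as the main technical ingredient of the proof.
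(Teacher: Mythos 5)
Your proposal is correct, but for the second part it takes a genuinely different route from the paper. The paper proves \eqref{eq2} and \eqref{eq3} by conjugating through $T$: it computes $T^{-1}(F \circ a)(x) = \alpha_{-x}(F(x))\,a$ and $T^{-1}(a \circ F)(x) = \alpha_{-x}(a)\,(T^{-1}F)(x)$, observes that in these coordinates the right action is untwisted (so it commutes with $\tfrac{\text{d}}{\text{d}x}$) while the left action produces the extra term $-\alpha'_{-x}(a)(T^{-1}F)(x)$, and then applies $T$ and \eqref{derivative of alpha}. You instead work entirely with the closed form $D(G)(x) = G'(x) - \alpha'_0(G(x))$ from \eqref{eq4}, which forces you to supply two structural facts not stated in the paper: that $\alpha'_0$ is a (continuous) derivation, and that $\alpha'_0 \circ \alpha_x = \alpha_x \circ \alpha'_0$. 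Both are true and your justifications (differentiating $\alpha_x(bc)=\alpha_x(b)\alpha_x(c)$ at $0$ with the product rule legitimized by joint continuity, and the group law combined with \eqref{derivative of alpha}) are sound, so the cancellations you describe do go through; the paper's conjugation trick simply sidesteps the need for the derivation property. For part 1 your estimate is, if anything, more careful than the paper's: the paper's displayed Leibniz expansion writes $\alpha_0^{(k-i)}(a)$ and bounds it by $C_{k-i,m}\|a\|_m$, whereas the correct $k$-th derivative of $F(x)\alpha_x(a)$ involves $\alpha_x^{(i)}(a)=\alpha_x(\alpha_0^{(i)}(a))$, which carries the tempered polynomial $p_m(x)$; your version absorbs that polynomial into higher Schwartz seminorms of $F$, exactly as needed. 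The trade-off is that the paper's argument is shorter and self-contained within its established lemmas, while yours makes the role of the infinitesimal generator $\alpha'_0$ as a derivation explicit, which is conceptually clarifying but adds two auxiliary verifications.
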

\begin{proof}
	\indent
	\begin{enumerate}
		\item 
		The argument for $a \circ F$ is pretty much trivial, we only need to check that $F \circ a \in \mathscr{S}(\mathbb{R}, A)$. Fix a generating system of seminorms $\{ \left\| \cdot \right\|_m \}$ on $A$, satisfying the conditions of the Proposition \ref{criteria for smooth act}.
		
		Notice that for every $k, l \in \mathbb{Z}_{\ge 0}$ and $m \in \mathbb{N}$ we have
		\[
		\begin{aligned}
		\left\| F \circ a \right\|_{k, l, m} &=  \sup_{x \in \mathbb{R}} \left\| x^l \frac{\text{d}^k}{\text{d} x^k} (F \circ a)(x) \right\|_m = \sup_{x \in \mathbb{R}} \left\| \sum_{i = 0}^k x^l  \binom{k}{i} F^{(i)}(x) \alpha^{(k-i)}_0(a) \right\|_m \le \\
		&\le \sum_{i = 0}^k \binom{k}{i} |x^l| \left\| F^{(i)}(x) \right\|_m \left\| \alpha_0^{(k-i)}(a) \right\|_m \le \left\| a \right\|_m  \sum_{i = 0}^k \binom{k}{i} C_{k-i, m} |x^l| \left\| F^{(i)}(x) \right\|_m = \\
		&= \left\| a \right\|_m  \sum_{i = 0}^k \binom{k}{i} C_{k-i, m} \left\| F \right\|_{i, l, m} < \infty.
		\end{aligned}
		\]
		
		\item
		Checking these equalities is pretty straightforward:
		\[
		(F \circ a)'(x) = F(x) \alpha'_x(a) + F'(x) \alpha_x(a) \stackrel{\ref{derivative of alpha}}{=} F(x) \alpha_x(\alpha'_0(a)) + F'(x) \alpha_x(a) = (F' \circ a)(x) + (F \circ \alpha'_0(a))(x),
		\]
		\[
		\left( T \circ \frac{\text{d}}{\text{d}x} \circ T^{-1}\right) (F \circ a) = \left(  \left( T \circ \frac{\text{d}}{\text{d}x} \right) (T^{-1}F) \circ a\right) = \left( \left( T \circ \frac{\text{d}}{\text{d}x} \circ T^{-1}\right) F\right)  \circ a.
		\]
		\[
		\begin{aligned}
			&\left( T \circ \frac{\text{d}}{\text{d}x} \circ T^{-1}\right) (a \circ F)(x) = \left( T \circ \frac{\text{d}}{\text{d}x} \right) (\alpha_{-x}(a) (T^{-1}F)(x)) = \\ &= T\left( \alpha_{-x}(a) (T^{-1}F)'(x) - \alpha'_{-x}(a) (T^{-1}F)(x) \right) 
			= \left( a \circ \left( T \circ \frac{\text{d}}{\text{d}x} \circ T^{-1}\right)(F) - \alpha'_0(a) \circ F\right)(x).
		\end{aligned}
		\]
	\end{enumerate}
\end{proof}

\begin{lemma}
	\label{lemma2.1}
	The bimodule $S_\alpha$ belongs to the categories $\mathscr{S}(\mathbb{R}, A; \alpha)$-\textbf{mod}-$A(\textbf{Fr})$ and $A$-\textbf{mod}-$\mathscr{S}(\mathbb{R}, A; \alpha)(\textbf{Fr})$.
	In particular, the following $\mathscr{S}(\mathbb{R}, A; \alpha)$-$\hat{\otimes}$-bimodule structure on $S_\alpha \hat{\otimes}_A S_\alpha$ is well-defined:
	\[
	H \circ (F \otimes G) = (H *_\alpha F) \otimes G, \quad (F \otimes G) \circ H = F \otimes (G *_\alpha H)
	\]
	for any $F, G, H \in \mathscr{S}(\mathbb{R}, A)$.
\end{lemma}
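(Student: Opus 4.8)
The plan is to check directly that the four operations used to define $S_\alpha$ obey the relevant module and bimodule axioms and are jointly continuous, and then to obtain the structure on $S_\alpha \hat{\otimes}_A S_\alpha$ by pushing the outer $\mathscr{S}(\mathbb{R}, A; \alpha)$-actions through the balanced tensor product. Since $S_\alpha$ is by definition $\mathscr{S}(\mathbb{R}, A)$ as a locally convex space, it is automatically a Fr\'echet space, so only the algebraic axioms and joint continuity need attention.

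First I would sort out which operations serve which category. Membership in $\mathscr{S}(\mathbb{R}, A; \alpha)$-\textbf{mod}-$A$ uses the left convolution $H \circ F = H *_\alpha F$ and the right action $(F \circ a)(x) = F(x)\alpha_x(a)$; membership in $A$-\textbf{mod}-$\mathscr{S}(\mathbb{R}, A; \alpha)$ uses the left action $(a \circ F)(x) = aF(x)$ and the right convolution $F \circ H = F *_\alpha H$. That each $A$-action lands in $\mathscr{S}(\mathbb{R}, A)$, together with the seminorm estimate (in terms of the submultiplicative seminorms of Proposition \ref{criteria for smooth act}) witnessing joint continuity, is exactly the content of part (1) of the preceding proposition; the convolution actions are jointly continuous because they are restrictions of the multiplication of $\mathscr{S}(\mathbb{R}, A; \alpha)$, a $\hat{\otimes}$-algebra by Theorem \ref{def of smooth ore extensions}.

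Next I would verify the algebraic identities, all of which rest on two properties of the action: each $\alpha_x$ is an algebra automorphism, and $\alpha_y \circ \alpha_{x-y} = \alpha_x$. The right $A$-module law $(F \circ a) \circ b = F \circ (ab)$ follows from $\alpha_x(a)\alpha_x(b) = \alpha_x(ab)$, the left $A$-module law is immediate, and associativity of both convolution actions is associativity of $*_\alpha$. The two crucial identities are the compatibilities making $S_\alpha$ a genuine bimodule: $(H \circ F)\circ a = H \circ (F \circ a)$ and $a \circ (F \circ H) = (a \circ F)\circ H$. The second is a one-line manipulation inside the convolution integral; the first is where the twist matters, and it holds because $\alpha_y(\alpha_{x-y}(a)) = \alpha_x(a)$ lets the constant factor $\alpha_x(a)$ be moved past the integration variable $y$.

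Finally, for the \emph{in particular} clause I would read the two claimed actions as operators tensored over $A$. The left action $H \circ (F \otimes G) = (H *_\alpha F)\otimes G$ is $L_H \hat{\otimes}_A \mathrm{id}$, where $L_H\colon S_\alpha \to S_\alpha$ is left convolution by $H$; the first compatibility identity says precisely that $L_H$ is a morphism of right $A$-modules, so it descends to $S_\alpha \hat{\otimes}_A S_\alpha$. Dually $(F \otimes G)\circ H = \mathrm{id}\hat{\otimes}_A R_H$ with $R_H$ right convolution by $H$, which the second compatibility identity makes a morphism of left $A$-modules. Because $L_H$ and $R_H$ act on different factors they commute, yielding the bimodule axiom, while $L_{H_1}\circ L_{H_2} = L_{H_1 *_\alpha H_2}$ gives associativity; joint continuity descends since convolution is jointly continuous and $-\hat{\otimes}_A-$ is a continuous bifunctor. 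The only genuinely delicate point in the whole argument is confirming that $L_H$ and $R_H$ are one-sided $A$-module morphisms, for this is what guarantees that the induced maps on the completed balanced tensor product are well-defined; the remainder is bookkeeping built on the two preceding propositions.
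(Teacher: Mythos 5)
Your proposal is correct and follows essentially the same route as the paper: the paper's proof likewise reduces everything to the two compatibility identities $(H \circ F) \circ a = H \circ (F \circ a)$ and $(a \circ F) \circ H = a \circ (F \circ H)$, verified by the same manipulation $\alpha_y(\alpha_{x-y}(a)) = \alpha_x(a)$ inside the convolution integral. Your additional remarks on joint continuity and on realizing the induced actions as $L_H \hat{\otimes}_A \mathrm{id}$ and $\mathrm{id} \hat{\otimes}_A R_H$ only make explicit what the paper leaves implicit.
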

\begin{proof}
	We only need to prove that $(H \circ F) \circ a = H \circ (F \circ a)$ and $(a \circ F) \circ H = a \circ (F \circ H)$ for any $a \in A; F, H \in S_\alpha$.
	\[
	H \circ (F \circ a)(x) = \int_\mathbb{R} H(y) \alpha_y(F(x-y)) \alpha_x(a) \text{d}y = (H \circ F) \circ a(x),
	\]
	\[
	(a \circ F) \circ H(x) = \int_\mathbb{R} aF(y) \alpha_y(H(x-y)) \text{d}y = a \circ (H \circ F)(x).
	\]
\end{proof}

It is also easy to see that $S_\alpha \hat{\otimes}_A S_\alpha$ is a well-defined $A$-$\hat{\otimes}$-bimodule.

\begin{lemma}
	\label{lemma2.2}
	Define the following maps:
	\[
	\begin{aligned}
	m : S_\alpha \hat{\otimes}_A S_\alpha & \longrightarrow S_\alpha, & \quad m(F \otimes G) &= F *_\alpha G, \\
	j : S_\alpha \hat{\otimes}_A S_\alpha & \longrightarrow S_\alpha \hat{\otimes}_A S_\alpha, & \quad j(F \otimes G) &= F' \otimes G - F \otimes \left( T \circ \frac{\text{d}}{\text{d}x} \circ T^{-1}\right) G .
	\end{aligned}
	\]
	These maps are well-defined $\mathscr{S}(\mathbb{R}, A; \alpha)$-$\hat{\otimes}$-bimodule and $A$-$\hat{\otimes}$-bimodule homomorphisms.
\end{lemma}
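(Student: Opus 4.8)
The plan is to deduce everything from the universal property of the balanced tensor product: a continuous $A$-balanced bilinear map out of $S_\alpha \times S_\alpha$ into a complete LCS factors uniquely and continuously through $S_\alpha \hat{\otimes}_A S_\alpha$. Writing $D := T \circ \frac{\text{d}}{\text{d}x} \circ T^{-1}$ for brevity, I would first exhibit $m$ and $j$ as induced by the bilinear maps $(F,G) \mapsto F *_\alpha G$ and $(F,G) \mapsto F' \otimes G - F \otimes DG$, respectively. Continuity of $m$ is just joint continuity of the convolution on the Fr\'echet-Arens-Michael algebra $\mathscr{S}(\mathbb{R}, A; \alpha)$ (Theorem \ref{def of smooth ore extensions}); continuity of $j$ follows because $F \mapsto F'$ is continuous on $\mathscr{S}(\mathbb{R}, A)$, $D$ is continuous as a composition of the continuous maps $T^{\pm 1}$ (Proposition \ref{properties of the operator T}) and $\frac{\text{d}}{\text{d}x}$, and the canonical map into $S_\alpha \hat{\otimes}_A S_\alpha$ is continuous bilinear. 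Thus $j$ lands in the complete LCS $S_\alpha \hat{\otimes}_A S_\alpha$, so the universal property applies to it as well.

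Next I would verify $A$-balancedness. For $m$ this is direct: unwinding $(F\circ a)*_\alpha G$ and $F *_\alpha (a\circ G)$ and using that each $\alpha_y$ is an algebra automorphism shows both equal $\int_\mathbb{R} F(y)\,\alpha_y(a\,G(x-y))\,\text{d}y$. For $j$ the balancing is the one genuinely bookkeeping-heavy step, but it reduces cleanly to the already-established identities: starting from $j((F\circ a)\otimes G)=(F\circ a)'\otimes G - (F\circ a)\otimes DG$, I would rewrite $(F\circ a)'$ via \eqref{eq1}, then push all three $A$-factors across the tensor sign using the defining balancing relations of $\hat{\otimes}_A$, and finally recognize the result as $j(F\otimes(a\circ G))$ after substituting $D(a\circ G)=a\circ DG-\alpha'_0(a)\circ G$ from \eqref{eq3}. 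No new computation beyond \eqref{eq1} and \eqref{eq3} is needed.

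For the homomorphism properties it suffices, by continuity and density of elementary tensors, to check them on $F\otimes G$. That $m$ is an $\mathscr{S}(\mathbb{R},A;\alpha)$-bimodule map is just associativity of $*_\alpha$ applied to the actions of Lemma \ref{lemma2.1}; that it is an $A$-bimodule map follows from $(a\circ F)*_\alpha G = a\circ(F*_\alpha G)$ (linearity of the integral) and from $F*_\alpha(G\circ a)=(F*_\alpha G)\circ a$, where the cocycle identity $\alpha_y\circ\alpha_{x-y}=\alpha_x$ produces the required factor $\alpha_x(a)$. For $j$, the left $\mathscr{S}$-linearity rests on $(H*_\alpha F)'=H*_\alpha F'$ (differentiation under the integral hits only $\alpha_y(F(x-y))$), left $A$-linearity on the trivial $(a\circ F)'=a\circ F'$, and right $A$-linearity is exactly \eqref{eq2}.

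The main obstacle is right $\mathscr{S}(\mathbb{R},A;\alpha)$-linearity of $j$, i.e.\ the identity $D(G*_\alpha H)=DG *_\alpha H$, needed to match $j((F\otimes G)\circ H)$ with $j(F\otimes G)\circ H$. I would prove it by splitting $D=\frac{\text{d}}{\text{d}x}-\alpha'_0(\cdot)$ (via \eqref{eq4}): the derivative part uses $(G*_\alpha H)'=G*_\alpha H'$ together with part (3) of Proposition \ref{properties of the operator T}, while the term $\alpha'_0(G*_\alpha H)$ is handled by the Leibniz rule $\alpha'_0(G*_\alpha H)=(\alpha'_0 G)*_\alpha H + G*_\alpha(\alpha'_0 H)$. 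This last rule is where the work sits, and it follows from $\alpha'_0$ being a derivation of $A$ (differentiate $\alpha_x(ab)=\alpha_x(a)\alpha_x(b)$ at $x=0$) combined with $\alpha'_0(\alpha_y(b))=\alpha_y(\alpha'_0(b))$ from \eqref{derivative of alpha}; equivalently one may obtain the same identity directly by integrating by parts in $y$, the boundary terms vanishing because all functions are Schwartz. Assembling these gives $D(G*_\alpha H)-DG*_\alpha H=0$ and completes the proof.
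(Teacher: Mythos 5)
Your proposal is correct and follows the same overall plan as the paper: balancedness of $j$ via \eqref{eq1} and \eqref{eq3}, left $\mathscr{S}(\mathbb{R},A;\alpha)$-linearity from $(H*_\alpha F)'=H*_\alpha F'$, right $A$-linearity from \eqref{eq2}, and the $A$-bimodule and $m$-related checks by direct computation on elementary tensors. The one place where you genuinely diverge is the right $\mathscr{S}(\mathbb{R},A;\alpha)$-linearity of $j$, i.e.\ the identity $\left(T\circ\frac{\text{d}}{\text{d}x}\circ T^{-1}\right)(G*_\alpha H)=\left(T\circ\frac{\text{d}}{\text{d}x}\circ T^{-1}\right)(G)*_\alpha H$: the paper establishes it by explicitly computing $T^{-1}(G*H)$, differentiating under the integral, applying $T$, and changing variables, whereas you split the operator as $F\mapsto F'-\alpha_0'(F)$ via \eqref{eq4}, use that $\alpha_0'$ is a derivation commuting with each $\alpha_y$ (by \eqref{derivative of alpha}) to get the Leibniz rule for $\alpha_0'$ on convolutions, and then invoke part (3) of Proposition \ref{properties of the operator T} to convert $G*_\alpha DH$ into $G'*_\alpha H$. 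Both routes rest on the same two facts (the intertwining $\alpha_y'=\alpha_y\circ\alpha_0'$ and an integration by parts, the latter hidden inside Proposition \ref{properties of the operator T}(3) in your version); yours is more structural and reuses the already-proved proposition, while the paper's is a self-contained integral manipulation. Your additional remarks on continuity and the universal property of $\hat{\otimes}_A$ are points the paper leaves implicit, and they are correctly handled.
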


\begin{proof}
	First of all, let us prove that $m$ and $j$ are well-defined:
	\[
	m(F \circ a \otimes G)(x) = \int_\mathbb{R} F(y)\alpha_y(a) \alpha_y(G(x-y)) \text{d} y = \int_\mathbb{R} F(y) \alpha_y(aG(x-y)) \text{d} y = m(F \otimes a \circ G)(x, y).
	\]
	\[
	\begin{split}
	j(F \circ a \otimes G) &= (F \circ a)' \otimes G - (F \circ a) \otimes \left( T \circ \frac{\text{d}}{\text{d}x} \circ T^{-1}\right)(G) \stackrel{\ref{eq1}}{=} \\
	&\stackrel{\ref{eq1}}{=} (F' \circ a) \otimes G + (F \circ \alpha'_0(a)) \otimes G - (F \circ a) \otimes \left( T \circ \frac{\text{d}}{\text{d}x} \circ T^{-1}\right)(G) = \\
	&= F' \otimes a \circ G + F \otimes \alpha'_0(a) \circ G - F \otimes a \circ \left( T \circ \frac{\text{d}}{\text{d}x} \circ T^{-1}\right)(G).
	\end{split}
	\]
	\[
	\begin{split}
	j(F \otimes a \circ G) &= F' \otimes a \circ G - F(x) \otimes \left( T \circ \frac{\text{d}}{\text{d}x} \circ T^{-1}\right) (a \circ G) \stackrel{\ref{eq3}}{=} \\
	&\stackrel{\ref{eq3}}{=} F' \otimes a \circ G - F \otimes a \circ \left( T \circ \frac{\text{d}}{\text{d}x} \circ T^{-1}\right)(G) + F \otimes \alpha'_0(a) \circ G = j(F \circ a \otimes G).
	\end{split}
	\]
	The algebra $\mathscr{S}(\mathbb{R}, A; \alpha)$ is associative, therefore, $m$ is a $\mathscr{S}(\mathbb{R}, A; \alpha)$-$\hat{\otimes}$-bimodule homomorphism.
	
	It is relatively easy to show that $j$ is a left $\mathscr{S}(\mathbb{R}, A; \alpha)$-$\hat{\otimes}$-module homomorphism:
	\[
	\begin{split}
	j((H *_\alpha F) \otimes G) &= (H *_\alpha F)' \otimes G - (H *_\alpha F) \otimes \left( T \circ \frac{\text{d}}{\text{d}x} \circ T^{-1}\right)(G) = \\
	&= (H *_\alpha F') \otimes G - (H *_\alpha F) \otimes \left( T \circ \frac{\text{d}}{\text{d}x} \circ T^{-1}\right)(G) = H *_\alpha j(F \otimes G).
	\end{split}
	\]
	And it is slightly more difficult to show that it is a right $\mathscr{S}(\mathbb{R}, A; \alpha)$-$\hat{\otimes}$-module homomorphism.
	\[
	\begin{split}
	j( F \otimes (G *_\alpha H)) &= F' \otimes (G *_\alpha H) - F \otimes T((T^{-1}(G * H))')
	\end{split}
	\]
	\[ 
	T^{-1}(G * H)(x) = \int_\mathbb{R} \alpha_{-x}(G(y)) \alpha_{-x+y}(H(x-y)) \text{d}y = \int_\mathbb{R} \alpha_{-x}(G(y+x)) \alpha_y(H(-y)) \text{d}y
	\]
	\[
	\begin{aligned}
		(T^{-1}(G * H))'(x) &=  \int_{\mathbb{R}} \frac{\text{d}}{\text{d}x} \left( \alpha_{-x}(G(y+x)) \right)  \alpha_y(H(-y)) \text{d}y \stackrel{\ref{eq5}}{=} \\
		&\stackrel{\ref{eq5}}{=} \int_{\mathbb{R}} (\alpha_{-x}(G'(x+y)) - \alpha'_{-x}(G(x+y)) ) \alpha_y(H(-y)) \text{d}y.
	\end{aligned}
	\]
	\[
	\begin{split}
	T((T^{-1}(G * H))')(x) &= \int_{\mathbb{R}} ( G'(x+y) - \alpha_x(\alpha'_{-x}(G(x+y)))) \alpha_{x+y}(H(-y)) \text{d}y \stackrel{\ref{derivative of alpha}}{=} \\ &\stackrel{\ref{derivative of alpha}}{=} \int_{\mathbb{R}} (G'(x+y) - \alpha'_0(G(x+y))) \alpha_{x+y}(H(-y)) \text{d}y = \\
	&= \int_{\mathbb{R}} (G'(y) - \alpha'_0(G(y))) \alpha_{y}(H(x-y)) \text{d}y \stackrel{\ref{eq4}}{=} \left(  \left( T \circ \frac{\text{d}}{\text{d}x} \circ T^{-1}\right)(G) *_\alpha H \right) (x).
	\end{split}
	\]
	Therefore, we have
	\[
	j( F \otimes (G *_\alpha H)) = F' \otimes (G *_\alpha H) - F \otimes \left( T \circ \frac{\text{d}}{\text{d}x} \circ T^{-1}\right)(G) *_\alpha H = j(F \otimes G) *_\alpha H.
	\]
	Now let us check that $j$ and $m$ are $A$-$\hat{\otimes}$-bimodule homomorphisms:
	\[
	\begin{aligned}
	m(a \circ F \otimes G)(x) = \int_R a F(y) \alpha_y(G(x-y)) \text{d}y = (a \circ m(F \otimes G))(x) \\
	m(F \otimes G \circ a)(x) = \int_R F(y) \alpha_y(G(x-y)) \alpha_x(a) \text{d}y = (m(F \otimes G) \circ a)(x) 
	\end{aligned}
	\]
	\[
	\begin{aligned}
	j(a \circ F \otimes G) &= (a \circ F)' \otimes G - a \circ F \otimes \left( T \circ \frac{\text{d}}{\text{d}x} \circ T^{-1}\right)(G) = a \circ j(F \otimes G) \\
	j(F \otimes G \circ a) &= F' \otimes G \circ a - F \otimes \left( T \circ \frac{\text{d}}{\text{d}x} \circ T^{-1}\right)(G \circ a) \stackrel{\ref{eq2}}{=}\\
	&\stackrel{\ref{eq2}}{=} F' \otimes G \circ a - F(x) \otimes \left( T \circ \frac{\text{d}}{\text{d}x} \circ T^{-1}\right)(G) \circ a = j(F \otimes G) \circ a.
	\end{aligned}
	\]
\end{proof}
As a corollary from the Proposition \ref{properties of the operator T} we have $m \circ j = 0$.

\begin{proposition}
	The tensor product $S_\alpha \hat{\otimes}_A S_\alpha$ is isomorphic to $\mathscr{S}(\mathbb{R}^2, A)$ as a locally convex space:
	\[
	\begin{aligned}
	I_1 : S_\alpha \hat{\otimes}_A S_\alpha \longrightarrow \mathscr{S}(\mathbb{R}^2, A), \quad I_1(F \otimes G)(x, y) = \alpha_{-x}(F(x))G(y). \\
	\end{aligned}
	\]
\end{proposition}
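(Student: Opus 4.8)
The plan is to realize $I_1$ as a composite of two isomorphisms: an ``untwisting'' coming from the operator $T$ of Proposition \ref{properties of the operator T}, followed by a collapse of the balanced tensor product that is powered by the self-induced hypothesis on $A$. First I would check that $I_1$ is a well-defined continuous linear map on $S_\alpha \hat{\otimes}_A S_\alpha$, i.e. that the bilinear assignment $(F, G) \mapsto \big((x, y) \mapsto \alpha_{-x}(F(x)) G(y)\big)$ is continuous and $A$-balanced. Continuity is routine from the seminorm estimates already established for $T$ and $T^{-1}$, and balancedness is the identity
\[
\alpha_{-x}\big((F \circ a)(x)\big) G(y) = \alpha_{-x}(F(x)) \, a \, G(y) = \alpha_{-x}(F(x)) \, (a \circ G)(y),
\]
which holds because $\alpha_{-x} \alpha_x = \mathrm{id}$. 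Hence $I_1$ factors through the balanced tensor product.

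Next I would untwist. Let $M$ denote $\mathscr{S}(\mathbb{R}, A)$ equipped with the \emph{pointwise} $A$-bimodule structure $(a \cdot F \cdot b)(x) = a F(x) b$. The computation $T^{-1}(F \circ a)(x) = \alpha_{-x}(F(x) \alpha_x(a)) = \alpha_{-x}(F(x)) \, a$ shows that $T^{-1} : S_\alpha \to M$ is an isomorphism of right $A$-$\hat{\otimes}$-modules, while the left $A$-actions of $S_\alpha$ and $M$ literally coincide. Consequently $T^{-1} \hat{\otimes}_A \mathrm{id}$ descends to an isomorphism $S_\alpha \hat{\otimes}_A S_\alpha \xrightarrow{\sim} M \hat{\otimes}_A M$, and by construction $I_1 = \mu \circ (T^{-1} \hat{\otimes}_A \mathrm{id})$, where $\mu : M \hat{\otimes}_A M \to \mathscr{S}(\mathbb{R}^2, A)$ is the map $\mu(P \otimes Q)(x, y) = P(x) Q(y)$. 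It therefore suffices to prove that $\mu$ is a topological isomorphism.

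For the last step I would invoke the earlier Proposition identifying $\mathscr{S}(\mathbb{R}, A) \cong \mathscr{S}(\mathbb{R}) \hat{\otimes} A$, under which $M$ becomes the free module $\mathscr{S}(\mathbb{R}) \hat{\otimes} A$ with $A$ acting on the second factor. Commuting $\hat{\otimes}_A$ past the scalar Fr\'echet factors $\mathscr{S}(\mathbb{R})$ and applying the self-induced hypothesis $A \hat{\otimes}_A A \cong A$ gives
\[
M \hat{\otimes}_A M \cong \mathscr{S}(\mathbb{R}) \hat{\otimes} (A \hat{\otimes}_A A) \hat{\otimes} \mathscr{S}(\mathbb{R}) \cong \mathscr{S}(\mathbb{R}) \hat{\otimes} A \hat{\otimes} \mathscr{S}(\mathbb{R}) \cong \mathscr{S}(\mathbb{R}^2, A),
\]
and tracking a simple tensor $(f \otimes a) \otimes (g \otimes b)$ through this chain yields $(x, y) \mapsto f(x) g(y) \, ab$, which is exactly $\mu\big((f \otimes a) \otimes (g \otimes b)\big)$; so the chain of isomorphisms reproduces $\mu$.

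The hard part will be the last paragraph: rigorously justifying that the completed projective tensor product over $A$ may be rearranged past the scalar Schwartz factors, and that the self-induced isomorphism is precisely the ingredient collapsing $A \hat{\otimes}_A A$ to $A$. Here I would lean on nuclearity of $\mathscr{S}(\mathbb{R})$, so that the various tensor-product topologies agree and the associativity/commutation isomorphisms for $\hat{\otimes}$ are genuine topological isomorphisms, and on functoriality of $\hat{\otimes}_A$ to ensure that $T^{-1} \hat{\otimes}_A \mathrm{id}$ and the collapse maps are homeomorphisms rather than merely continuous bijections.
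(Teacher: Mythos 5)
Your proposal is correct, and it takes a genuinely different (and somewhat more structured) route than the paper. The paper proves the proposition by verifying the universal property of the balanced tensor product head-on: it replaces $\mathscr{S}(\mathbb{R}^2, A)$ by $\mathscr{S}(\mathbb{R}^2, A \hat{\otimes}_A A)$ using self-inducedness, and then, for an arbitrary continuous $A$-balanced map $Q : S_\alpha \times S_\alpha \to X$, defines the required factorization $\widetilde{Q}$ on the dense span of elementary tensors $f(x)g(y)\,a \otimes b$ and appeals to density and balancedness to conclude it is well defined and continuous. Your argument instead exhibits $I_1$ as an explicit composite: the untwisting $T^{-1} \hat{\otimes}_A \mathrm{id}$ (which is the one-sided half of the paper's own isomorphism $S_\alpha \cong {}_{\alpha^{-1}}S$ from Corollary \ref{corollary1}, applied to the first factor only), followed by the collapse $\mu$ of the balanced tensor product of two pointwise modules. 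Both proofs rest on the same two ingredients --- the identification $\mathscr{S}(\mathbb{R}, A) \cong \mathscr{S}(\mathbb{R}) \hat{\otimes} A$ and the self-induced isomorphism $A \hat{\otimes}_A A \cong A$ --- but they package the analytic difficulty differently. The paper's density argument leaves the continuity of the extension $\widetilde{Q}$ essentially unjustified (density alone does not let you extend a map from a dense subspace), whereas your factorization isolates the delicate point in a single reusable lemma, namely that $\hat{\otimes}_A$ commutes with tensoring by the scalar Fr\'echet factors $\mathscr{S}(\mathbb{R})$; your plan to justify this via nuclearity of $\mathscr{S}(\mathbb{R})$ (so that quotients by the closed balancing subspace behave well under $\hat{\otimes}$) together with the automatic joint continuity of separately continuous bilinear maps on Fr\'echet spaces is the right one. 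The trade-off is that the paper's proof is shorter and works directly with the universal property, while yours makes the role of the self-induced hypothesis and the continuity issues fully transparent; your verification that $I_1 = \mu \circ (T^{-1} \hat{\otimes}_A \mathrm{id})$ on elementary tensors, and that $T^{-1}$ intertwines exactly the right-module structures needed for the map to descend, is correct as written.
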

\begin{proof}
	First of all, we can replace $\mathscr{S}(\mathbb{R}^2, A)$ with $\mathscr{S}(\mathbb{R}^2, A \hat{\otimes}_A A)$, because $A$ is isomorphic to $A \hat{\otimes}_A A$ as a locally convex space. This is precisely where we use the fact that $A$ is a self-induced algebra.
	
	Let $X$ be a complete LCS and consider a continuous $A$-balanced map $Q : S_\alpha \times S_\alpha \longrightarrow X$. Define the map
	\[
	\widetilde{Q} : \mathscr{S}(\mathbb{R}^2, A \hat{\otimes}_A A) \longrightarrow X, \quad \widetilde{Q}(f(x)g(y) a \otimes b) = I_1( f(x)\alpha_x(a), g(x)b).
	\]
	This map is a well-defined continuous linear map, because $I_1$ is $A$-balanced and the linear span of $\{ f(x)g(y)a \otimes b : f, g \in \mathscr{S}(\mathbb{R}), a, b \in A\}$ is dense in $\mathscr{S}(\mathbb{R}^2, A \hat{\otimes}_A A)$. From the construction of $\widetilde{Q}$ it follows that the following diagram is commutative:
	\[
	\begin{tikzcd}
		\mathscr{S}(\mathbb{R}^2, A \hat{\otimes}_A A) \ar[r, "\widetilde{Q}"] & X \\
		S_\alpha \times S_\alpha \ar[u, "I_1"] \ar[ur, "Q"']
	\end{tikzcd}
	\]
	Moreover, $\widetilde{Q}$ is a unique mapping which makes this diagram commute.
\end{proof}

Therefore, the isomorphism $I_1$ induces the structure of $S_\alpha$-module on $\mathscr{S}(\mathbb{R}^2, A)$, which we will denote by $\mathscr{S}(\mathbb{R}^2, A)_\alpha$. Let us describe the action of $\mathscr{S}(\mathbb{R}, A; \alpha)$ and $A$ on $\mathscr{S}(\mathbb{R}^2, A)_\alpha$ explicitly.

\begin{lemma}
	\label{lemma2.3}
	The algebra $A$ acts on $\mathscr{S}(\mathbb{R}^2, A)_\alpha$ as follows:
	\[
	\begin{aligned}
		(a \circ F)(x, y) &= \alpha_{-x}(a)F(x, y) & \quad \text{ for any } &F \in \mathscr{S}(\mathbb{R}^2, A)_\alpha, a \in A  \\
		(F \circ a)(x, y) &= F(x, y)\alpha_y(a) & \quad \text{ for any } &F \in \mathscr{S}(\mathbb{R}^2, A)_\alpha, a \in A
	\end{aligned}
	\]
	The algebra $\mathscr{S}(\mathbb{R}, A; \alpha)$ acts on $\mathscr{S}(\mathbb{R}^2, A)_\alpha$ as follows:
	\[
	\begin{aligned}
	(H \circ F)(x, y) &= \int_{\mathbb{R}} \alpha_{-x}(H(z)) F(x - z, y) \text{d}z & \quad \text{ for any } &F \in \mathscr{S}(\mathbb{R}^2, A)_\alpha, H \in \mathscr{S}(\mathbb{R}, A; \alpha) \\
	(F \circ H)(x, y) &= \int_{\mathbb{R}} F(x, z) \alpha_z(H(y-z)) \text{d} z & \quad \text{ for any } &F \in \mathscr{S}(\mathbb{R}^2, A)_\alpha, H \in \mathscr{S}(\mathbb{R}, A; \alpha).
	\end{aligned}
	\]
\end{lemma}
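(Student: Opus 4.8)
The plan is to obtain all four formulas by transport of structure along the locally convex isomorphism $I_1$, reducing everything to a direct computation on elementary tensors. By construction the bimodule structure on $\mathscr{S}(\mathbb{R}^2, A)_\alpha$ is the unique one making $I_1$ an isomorphism of $\mathscr{S}(\mathbb{R}, A; \alpha)$-$A$-$\hat{\otimes}$-bimodules; concretely, for each of the module operations $\bullet$ and each $\Phi \in \mathscr{S}(\mathbb{R}^2, A)$ one has $\bullet\,\Phi = I_1\big(\bullet\, I_1^{-1}(\Phi)\big)$. Since the elementary tensors $F \otimes G$ (with $F, G \in S_\alpha$) have dense linear span in $S_\alpha \hat{\otimes}_A S_\alpha$ and all maps involved are continuous, I would only need to verify each identity on elements $\Phi = I_1(F \otimes G)$, that is $\Phi(x, y) = \alpha_{-x}(F(x)) G(y)$.

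For the two $A$-actions this is immediate. Using the $A$-bimodule structure on $S_\alpha \hat{\otimes}_A S_\alpha$ (so $a \circ (F \otimes G) = (a \circ F) \otimes G$ with $(a \circ F)(x) = a F(x)$), applying $I_1$ and using only that each $\alpha_{-x}$ is an algebra homomorphism gives
\[
I_1\big((a \circ F) \otimes G\big)(x, y) = \alpha_{-x}\big(a F(x)\big) G(y) = \alpha_{-x}(a)\, \alpha_{-x}(F(x))\, G(y) = \alpha_{-x}(a)\, \Phi(x, y),
\]
which is the stated left $A$-formula; the right one follows the same way from $(F \otimes G) \circ a = F \otimes (G \circ a)$, $(G \circ a)(y) = G(y) \alpha_y(a)$, yielding $\Phi(x, y) \alpha_y(a)$.

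For the convolution actions the one extra ingredient is the cocycle identity $\alpha_{-x} \circ \alpha_z = \alpha_{z - x} = \alpha_{-(x - z)}$. On the left, $H \circ (F \otimes G) = (H *_\alpha F) \otimes G$ by Lemma \ref{lemma2.1}; expanding the convolution and pushing $\alpha_{-x}$ through the integral (legitimate since $\alpha_{-x}$ is continuous and linear) I would get
\[
I_1\big((H *_\alpha F) \otimes G\big)(x, y) = \int_{\mathbb{R}} \alpha_{-x}(H(z))\, \alpha_{-(x - z)}(F(x - z))\, G(y)\, \text{d}z = \int_{\mathbb{R}} \alpha_{-x}(H(z))\, \Phi(x - z, y)\, \text{d}z,
\]
where the final step recognizes $\alpha_{-(x - z)}(F(x - z)) G(y) = \Phi(x - z, y)$. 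The right action is handled identically: from $(F \otimes G) \circ H = F \otimes (G *_\alpha H)$ one recognizes $\alpha_{-x}(F(x)) G(z) = \Phi(x, z)$ inside the integral and obtains $\int_{\mathbb{R}} \Phi(x, z) \alpha_z(H(y - z))\, \text{d}z$.

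All of this is elementary bookkeeping; the only point demanding care is that each induced formula must be written intrinsically in terms of $\Phi$ rather than the representative $F \otimes G$. Well-definedness is free from the transport-of-structure definition, so there is nothing to prove there; the real content is the rewriting step, where the cocycle identity for $\alpha$ and the identification $I_1$ (resting on the self-inducedness of $A$) conspire to turn the shifted $\alpha$-twists back into evaluations $\Phi(x - z, y)$ and $\Phi(x, z)$. I expect no genuine obstacle beyond tracking the argument shifts under $\alpha_{-x}$ correctly.
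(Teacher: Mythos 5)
Your proposal is correct and follows essentially the same route as the paper: define the actions on $\mathscr{S}(\mathbb{R}^2,A)_\alpha$ by transport along $I_1$, verify the formulas on elementary tensors (the paper parametrizes them as $TG_1\otimes G_2$ so that the image is the product function $G_1(x)G_2(y)$, while you work with $\Phi=I_1(F\otimes G)$ directly --- a cosmetic difference), and conclude by density and continuity. The computations, including the use of $\alpha_{-x}\circ\alpha_z=\alpha_{z-x}$ to recover $\Phi(x-z,y)$, match the paper's.
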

\begin{proof}
	In all cases we will check every relation on a dense subset, then we will use the continuity arguments to finish the proof. Let $F(x, y) = G_1(x) G_2(y)$.
	\[
	\begin{aligned}
	(a \circ F)(x, y) &= I_1( a \circ TG_1 \otimes G_2)(x, y) = \alpha_{-x}(a) G_1(x) G_2(y) = \alpha_{-x}(a)F(x, y) \\
	(F \circ a)(x, y) &= I_1(TG_1 \otimes G_2 \circ a)(x, y) = G_1(x)G_2(y)\alpha_y(a) = F(x, y) \alpha_y(a)
	\end{aligned}
	\]
	\[
	\begin{aligned}
	&(H \circ F)(x, y) = I_1\left( \int_{\mathbb{R}}H(z)\alpha_z(TG_1(x-z)) \text{d}z \otimes G_2(x) \right) \otimes G_2(x) = \\ &= I_1 \left( \int_{\mathbb{R}}H(z)\alpha_x(G_1(x-z)) \text{d}z \otimes G_2(x) \right) = \left( \int_{\mathbb{R}}\alpha_{-x}(H(z)) (G_1(x-z)) G_2(y) \text{d}z \right) =\\
	&= \int_{\mathbb{R}} \alpha_{-x}(H(z)) F(x - z, y) \text{d}z.
	\end{aligned}
	\]
	\[
	\begin{aligned}
	&(F \circ H)(x, y) = I_1 \left( TG_1(x) \otimes \int_{\mathbb{R}} G_2(z) \alpha_z(H(x-z)) \text{d} z \right) =  \int_{\mathbb{R}} G_1(x) G_2(z) \alpha_z(H(y -z)) \text{d} z = \\
	&= \int_{\mathbb{R}} F(x, z) \alpha_z(H(y-z)) \text{d} z.
	\end{aligned}
	\]
\end{proof}

Now we want to construct the right inverse maps to $m$ and $j$. First of all, let us describe the action of these maps on $\mathscr{S}(\mathbb{R}^2, A)$.

\begin{lemma}
	\label{lemma2.4}
	The following diagrams are commutative:
	\[
	\begin{tikzcd}
	S_\alpha \hat{\otimes}_A S_\alpha \ar[d, "I_1"] \ar[r, "j"] & S_\alpha \hat{\otimes}_A S_\alpha \ar[d, "I_1"] & S_\alpha \hat{\otimes}_A S_\alpha \ar[d, "I_1"] \ar[r, "m"] & S,  \\
	\mathscr{S}(\mathbb{R}^2, A)_\alpha \ar[r, "\iota"] & \mathscr{S}(\mathbb{R}^2, A)_\alpha  & \mathscr{S}(\mathbb{R}^2, A)_\alpha \ar[ru, "\pi"] & 
	\end{tikzcd}
	\]
	where 
	\[
	\begin{aligned}
	\iota(F)(x, y) &= \left( \frac{\partial F }{\partial x} - \frac{\partial F}{\partial y}  \right)(x, y) + \alpha'_0(F(x, y))  & \text{ for any } F\in \mathscr{S}(\mathbb{R}^2, A) \\
	\pi(F)(x) &= \int_{\mathbb{R}} \alpha_y ( F(y,x-y) ) \text{d} y & \text{ for any } F \in \mathscr{S}(\mathbb{R}^2, A)
	\end{aligned}
	\]
\end{lemma}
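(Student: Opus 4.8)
The plan is to verify both diagrams on the dense subspace spanned by elementary tensors $F \otimes G$ and then to extend by continuity of all maps involved to the whole of $S_\alpha \hat{\otimes}_A S_\alpha$. Throughout I would lean on four facts already established: the formula $\left(T \circ \frac{\text{d}}{\text{d}x}\circ T^{-1}\right)(G)(y) = G'(y) - \alpha'_0(G(y))$ from \eqref{eq4}; the product rule $\frac{\text{d}}{\text{d}x}\bigl(\alpha_{-x}(F(x))\bigr) = -\alpha'_{-x}(F(x)) + \alpha_{-x}(F'(x))$ from \eqref{eq5}; the commutation identity $\alpha'_{-x}(a) = \alpha_{-x}(\alpha'_0(a))$ coming from \eqref{derivative of alpha}; and the fact that $\alpha'_0$ is a derivation of $A$ commuting with each $\alpha_{-x}$, obtained by differentiating $\alpha_t(ab) = \alpha_t(a)\alpha_t(b)$ and the flow relation at $t = 0$.

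The triangle is immediate. Substituting $I_1(F \otimes G)(x,y) = \alpha_{-x}(F(x))G(y)$ into $\pi$ and using that each $\alpha_y$ is multiplicative with $\alpha_y \circ \alpha_{-y} = \mathrm{id}$ gives
\[
\pi(I_1(F\otimes G))(x) = \int_{\mathbb{R}} \alpha_y\bigl(\alpha_{-y}(F(y))\,G(x-y)\bigr)\,\text{d}y = \int_{\mathbb{R}} F(y)\,\alpha_y(G(x-y))\,\text{d}y = (F *_\alpha G)(x),
\]
which is exactly $m(F \otimes G)$, so $\pi \circ I_1 = m$ on elementary tensors.

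For the square I would compute the two composites separately. Applying $I_1$ to $j(F\otimes G) = F' \otimes G - F \otimes \left(T\circ \frac{\text{d}}{\text{d}x}\circ T^{-1}\right)G$ and invoking \eqref{eq4} on the second tensor factor yields
\[
I_1(j(F\otimes G))(x,y) = \alpha_{-x}(F'(x))G(y) - \alpha_{-x}(F(x))G'(y) + \alpha_{-x}(F(x))\,\alpha'_0(G(y)).
\]
On the other side, writing $H(x,y) = \alpha_{-x}(F(x))G(y)$, I would compute $\partial H/\partial x$ via \eqref{eq5} and the commutation identity, compute $\partial H/\partial y$ directly, and expand $\alpha'_0(H(x,y))$ using the Leibniz rule together with $\alpha'_0 \circ \alpha_{-x} = \alpha_{-x}\circ \alpha'_0$. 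Forming $\frac{\partial H}{\partial x} - \frac{\partial H}{\partial y} + \alpha'_0(H)$, the two terms $\mp\,\alpha_{-x}(\alpha'_0(F(x)))G(y)$ cancel, and what survives coincides with the right-hand side above. Hence $\iota \circ I_1 = I_1 \circ j$ on elementary tensors.

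The calculations are routine once the identities are in place; the one genuine subtlety, and the step I would treat most carefully, is the bookkeeping of where $\alpha'_0$ is applied — specifically that $\alpha'_0$ is a derivation and commutes with the flow $\alpha_{-x}$, which is precisely what makes the unwanted cross terms cancel. After that, continuity of $I_1$, $j$, $m$, $\iota$, and $\pi$ (each assembled from differentiation, integration, and the continuous automorphisms $\alpha_x$ on the relevant Schwartz spaces) propagates both identities from the dense linear span of elementary tensors to all of $S_\alpha \hat{\otimes}_A S_\alpha$, completing the proof.
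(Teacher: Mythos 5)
Your proposal is correct and follows essentially the same route as the paper: both verify the two diagrams on a dense set of elementary tensors using the identities \eqref{eq4}, \eqref{eq5}, \eqref{derivative of alpha} together with the fact that $\alpha'_0$ is a derivation commuting with each $\alpha_{-x}$, and then extend by continuity. The only cosmetic difference is that the paper computes the single composite $I_1 \circ j \circ I_1^{-1}$ on functions of the form $G(x)H(y)$, whereas you compute $I_1 \circ j$ and $\iota \circ I_1$ separately on $F \otimes G$; the underlying cancellation of the $\alpha_{-x}(\alpha'_0(F(x)))G(y)$ terms is identical.
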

\begin{proof}
	It is obvious that $\iota$ and $\rho$ are continuous, so we can assume that $F(x, y) = G(x)H(y)$ for some $G, H \in S$:
	\[
	\begin{aligned}
		&I_1 \circ j \circ I^{-1}_1(F)(x, y) = I_1 \circ j(TG \otimes H)(x, y) = I_1\left( (TG)'(x)H(y) - (TG)(x)(T(T^{-1}H)(y)) \right)  = \\ &= T^{-1}((TG)'(x)) H(y) - G(x) T((T^{-1}H)'(y)) \stackrel{\ref{eq4}}{=} (G'(x) + \alpha'_0(G(x))) H(y) - G(x)(H'(y) - \alpha'_0(H(y))) = \\
		&= \left( \frac{\partial F }{\partial x} - \frac{\partial F}{\partial y}  \right)(x, y) + \alpha'_0(G(x))H(y) + G(x)\alpha'_0(H(y)) =  \left( \frac{\partial F }{\partial x} - \frac{\partial F}{\partial y}  \right)(x, y) + \frac{\text{d}}{\text{d} t} \left. \alpha_t(G(x)H(y))\right|_{t=0}.
	\end{aligned}
	\]
	\[
	m \circ I_1^{-1}(F)(x) = \int_R TG(y) \alpha_y(H(x-y)) \text{d}y = \int_R \alpha_y(G(y)H(x-y)) \text{d}y = \pi(F)(x).
	\]
\end{proof}

Now we can construct the right inverse maps for $\rho$.

\begin{lemma}
	\label{lemma2.5}
	Fix a function $\varphi \in C^\infty_c(\mathbb{R})$ with $\int_{\mathbb{R}} \varphi(t) \text{d}t = 1$. Define the maps
	\[
	\begin{aligned}
	\rho_x : S_\alpha & \longrightarrow \mathscr{S}(\mathbb{R}^2, A)_\alpha, \quad 
	\rho_x(F)(x, y) = \varphi(y)\alpha_{-x}(F(x+y)), \\
	\rho_y : S_\alpha & \longrightarrow \mathscr{S}(\mathbb{R}^2, A)_\alpha, \quad 
	\rho_y(F)(x, y) = \varphi(x) \alpha_{-x}(F(x+y)).
	\end{aligned}
	\]
	Then $\rho_x$ is a $\mathscr{S}(\mathbb{R}, A; \alpha)$-$A$-$\hat{\otimes}$-bimodule homomorphism, and $\rho_y$ is a $A$-$\mathscr{S}(\mathbb{R}, A; \alpha)$-$\hat{\otimes}$-bimodule homomorphism. Moreover, we have
	\[
	\pi \circ \rho_x = \pi \circ \rho_y = \text{Id}_{S_\alpha}.
	\]
	As a corollary, the algebra $\mathscr{S}(\mathbb{R}, A; \alpha)$ is projective as a left and right $\mathscr{S}(\mathbb{R}, A; \alpha)$-$\hat{\otimes}$ module.
\end{lemma}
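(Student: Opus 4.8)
The plan is to verify the four assertions of the lemma in turn---well-definedness and continuity of $\rho_x,\rho_y$, the bimodule-homomorphism properties, the identities $\pi\circ\rho_x=\pi\circ\rho_y=\mathrm{Id}$, and finally the projectivity corollary---by reducing each to a direct computation on the explicit formulas, using the module actions computed in Lemma \ref{lemma2.3} and the identities for $T$, $T^{-1}$ and $\alpha^{(k)}_x$ from Proposition \ref{properties of the operator T}.

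First I would show $\rho_x(F),\rho_y(F)\in\mathscr{S}(\mathbb{R}^2,A)$. The one subtle point is that the bare function $(x,y)\mapsto\alpha_{-x}(F(x+y))$ is \emph{not} rapidly decreasing on $\mathbb{R}^2$, being essentially constant along the lines $x+y=\mathrm{const}$; the factor $\varphi$, whose support is compact in one variable, is exactly what forces the product into $\mathscr{S}(\mathbb{R}^2,A)$. Concretely I would rewrite $\rho_x(F)(x,y)=\varphi(y)\,\alpha_y\!\big((T^{-1}F)(x+y)\big)$, differentiate by Leibniz, and bound each seminorm $\|\cdot\|_{k,l,m}$ using the $m$-tempered estimates $\|\alpha_x(a)\|_m\le|p_m(x)|\,\|a\|_m$ and $\|\alpha^{(k)}_0(a)\|_m\le C_{k,m}\|a\|_m$ of Proposition \ref{criteria for smooth act}: since $y$ ranges over $\operatorname{supp}\varphi$, the variable $x$ differs from $x+y$ by a bounded amount, so the polynomial weights $|x^l|$ and $|p_m(x)|$ are absorbed by the rapid decay of $T^{-1}F$. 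The same estimate handles $\rho_y$ and simultaneously gives continuity.

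Next I would check the homomorphism properties by substitution. For the left $\mathscr{S}(\mathbb{R},A;\alpha)$-linearity of $\rho_x$ I would expand both $\rho_x(H\circ F)$ and $H\circ\rho_x(F)$ via the convolution action of Lemma \ref{lemma2.3}; after moving $\alpha_{-x}$ inside the integral and using the cocycle relation $\alpha_{-x}\circ\alpha_z=\alpha_{z-x}$, the two sides coincide verbatim. The right $A$-linearity $\rho_x(F\circ a)=\rho_x(F)\circ a$ collapses onto the identity $\alpha_{-x}\circ\alpha_{x+y}=\alpha_y$ together with the fact that each $\alpha_t$ is an algebra homomorphism; the two computations for $\rho_y$ are mirror images, the right $\mathscr{S}(\mathbb{R},A;\alpha)$-linearity requiring only a single shift $z\mapsto z-x$ of the integration variable. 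The splitting identities are then immediate: since $\rho_x(F)(y,x-y)=\varphi(x-y)\,\alpha_{-y}(F(x))$, one gets $\pi(\rho_x(F))(x)=\int_{\mathbb{R}}\alpha_y\big(\varphi(x-y)\alpha_{-y}(F(x))\big)\,dy=F(x)\int_{\mathbb{R}}\varphi(x-y)\,dy=F(x)$, where the normalization $\int_{\mathbb{R}}\varphi=1$ and the telescoping $\alpha_y\circ\alpha_{-y}=\mathrm{Id}$ are precisely what is used; the computation for $\rho_y$ is identical.

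Finally, for the corollary: the maps $\pi$ (which is $m$ transported along $I_1$, hence a $\mathscr{S}(\mathbb{R},A;\alpha)$-bimodule homomorphism by Lemma \ref{lemma2.2}) and $\rho_x$ are both homomorphisms of left $\mathscr{S}(\mathbb{R},A;\alpha)$-modules with $\pi\circ\rho_x=\mathrm{Id}$, so $S_\alpha$ is a retract of $S_\alpha\hat{\otimes}_A S_\alpha$ in $\mathscr{S}(\mathbb{R},A;\alpha)$-\textbf{mod}, and symmetrically $\rho_y$ exhibits it as a retract in \textbf{mod}-$\mathscr{S}(\mathbb{R},A;\alpha)$. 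As a retract of a projective module is projective, it remains to know that $S_\alpha\hat{\otimes}_A S_\alpha$ is projective as a one-sided module: this is the non-unital analogue of the fact that $A\hat{\otimes}_R A$ is free in the sequence \eqref{seq1}. In the transparent case $A=\mathbb{C}$ this is clear, since then $S_\alpha\hat{\otimes}_A S_\alpha=S_\alpha\hat{\otimes}S_\alpha$ sits inside the free module $(S_\alpha)_+\hat{\otimes}S_\alpha$ and $I_1^{-1}\circ\rho_x$ becomes a genuine left-module section of the canonical action map, so Helemskii's projectivity criterion applies directly and Ogneva's result is recovered. I expect this last point---establishing the projectivity of the ambient module $S_\alpha\hat{\otimes}_A S_\alpha$ for general self-induced $A$, rather than merely the retraction---to be the main obstacle, alongside the Schwartz-space estimates of the first step.
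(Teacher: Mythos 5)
Your verification of the two bimodule-homomorphism properties and of $\pi\circ\rho_x=\pi\circ\rho_y=\text{Id}_{S_\alpha}$ is exactly the paper's proof: the same substitutions, the same use of $\alpha_{-x}\circ\alpha_z=\alpha_{z-x}$, the same evaluation $\rho_x(F)(y,x-y)=\varphi(x-y)\alpha_{-y}(F(x))$ followed by the normalization $\int_{\mathbb{R}}\varphi=1$. The well-definedness estimates in your first step are not in the paper's proof at all; your observation that $(x,y)\mapsto\alpha_{-x}(F(x+y))$ by itself fails to be rapidly decreasing and that the compactly supported factor $\varphi$ is what forces the product into $\mathscr{S}(\mathbb{R}^2,A)$ is correct and fills a point the paper leaves implicit.

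On the corollary, which you flag as the main obstacle: you are right that the retraction alone does not suffice and that one must separately know that $S_\alpha\hat{\otimes}_A S_\alpha$ is projective as a one-sided $S$-module; you are also right that this is not established at this point in the text --- the paper's own proof of the lemma consists only of the computations above and says nothing about the corollary. The missing ingredient is supplied later: Corollary \ref{corollary1} identifies $S_\alpha$ with ${}_{\alpha^{-1}}S$, whose one-sided $A$-module structure is that of $\mathscr{S}(\mathbb{R})\hat{\otimes}A$, and concludes that $S_\alpha$ is projective as a left and right $A$-module; Lemma \ref{projective1} then gives projectivity of $S\hat{\otimes}_A X$ over $S$ whenever $X$ is $A$-projective. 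Combined with your retraction argument this yields the corollary for a general self-induced $A$, not only for $A=\mathbb{C}$: the resolution is by base change from $A$ up to $S$, not by exhibiting $S_\alpha\hat{\otimes}_A S_\alpha$ inside a free $S$-module directly. So your proposal is complete on everything the paper's proof actually proves, and the one step you could not finish is a forward reference to Section 3 rather than something derivable from the present lemma alone.
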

\begin{proof}
	For any $F, H \in \mathscr{S}(\mathbb{R}, A; \alpha), a \in A$ we have
	\[
	\begin{aligned}
	\rho_x(H *_\alpha F)(x, y) &= \varphi(y) \int_{\mathbb{R}} \alpha_{-x}(H(z)) \alpha_{z-x}(F(x+y-z)) \text{d} z \\
	H \circ \rho_x(F)(x, y) &= \int_{\mathbb{R}} \alpha_{-x}(H(z)) \rho_x(F)(x-z, y) \text{d} z = \int_{\mathbb{R}} \varphi(y) \alpha_{-x}(H(z)) \alpha_{z-x}(F(x+y-z)) \text{d} z \\
	\rho_x(F \circ a)(x, y) &= \varphi(y) \alpha_{-x}((F \circ a)(x+y)) = \varphi(y) F(x+y) \alpha_y(a) = (\rho_x(F)) \circ a \\
	(\pi \circ \rho_x)(F)(x) &= \int_{\mathbb{R}} \alpha_y(\rho_x(F)(y,x-y)) \text{d}y = \int_{\mathbb{R}} \varphi(x - y) F(x)\text{d}y = F(x) \int_{\mathbb{R}} \varphi(x-y) \text{d} y = F(x).
	\end{aligned}
	\]
	\[
	\begin{aligned}
	\rho_y(F *_\alpha H)(x, y) &= \varphi(x) \int_{\mathbb{R}} \alpha_{-x}(F(z)) \alpha_{z-x}(H(x+y-z)) \text{d}z = \int_{\mathbb{R}} \varphi(x) \alpha_{-x}(F(z+x)) \alpha_z(H(y-z)) \text{d}z \\
	\rho_y(F) \circ H(x, y) &= \int_{\mathbb{R}} \rho_y(F)(x, z) \alpha_z(H(y-z)) \text{d}z = \int_{\mathbb{R}} \varphi(x) \alpha_{-x}(F(x+z)) \alpha_z(H(y-z)) \text{d}z \\
	\rho_y(a \circ F)(x, y) &= \varphi(x) \alpha_{-x}((a \circ F)(x + y)) = \varphi(x) \alpha_{-x}(a) F(x+y) = (a \circ \rho_y(F))(x, y) \\
	(\pi \circ \rho_y)(F)(x) &= \int_{\mathbb{R}} \alpha_y(\rho_y(F)(y, x-y)) \text{d}y = \int_{\mathbb{R}} \varphi(y) F(x) \text{d}y = F(x).
	\end{aligned}
	\]
\end{proof}

\begin{lemma}
	\label{lemma2.6}
	Fix a function $\varphi \in C^\infty_c(\mathbb{R})$ with $\int_{\mathbb{R}} \varphi(t) \text{d}t = 1$. Define the maps
	\[
	\begin{aligned}
	\beta_x & : \mathscr{S}(\mathbb{R}^2, A)_\alpha \longrightarrow \mathscr{S}(\mathbb{R}^2, A)_\alpha, \\
	\beta_y & : \mathscr{S}(\mathbb{R}^2, A)_\alpha \longrightarrow \mathscr{S}(\mathbb{R}^2, A)_\alpha, \\
	\beta_x &(F)(x, y) = \int_{-\infty}^x \left( \alpha_{t-x}(F(t, x+y - t)) - \varphi(x+y-t) \int_{\mathbb{R}} \alpha_{z-x}(F(z, x+y-z)) \text{d}z \right) \text{d}t \\
	\beta_y &(F)(x, y) = \int_{-\infty}^x \left( \alpha_{t-x}(F(t, x+y - t)) - \varphi(t) \int_{\mathbb{R}} \alpha_{z-x}(F(z, x+y-z)) \text{d}z \right) \text{d}t
	\end{aligned}
	\]
	Then $\beta_x$ is a $\mathscr{S}(\mathbb{R}, A; \alpha)$-$A$-$\hat{\otimes}$-bimodule homomorphism, and $\beta_y$ is a $A$-$\mathscr{S}(\mathbb{R}, A; \alpha)$-$\hat{\otimes}$-bimodule homomorphism. Moreover, we have
	\[
	\beta_x \circ \iota = \beta_y \circ \iota = \text{Id}_{\mathscr{S}(\mathbb{R}^2, A)_\alpha}.
	\]
\end{lemma}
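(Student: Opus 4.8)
The plan is to verify the three assertions separately, treating $\beta_x$ throughout; the argument for $\beta_y$ is identical, since the two maps differ only in whether $\varphi(s-t)$ or $\varphi(t)$ appears in the correction term, and this factor is immaterial in every decisive step. It is convenient to abbreviate $s = x+y$ and, using that $\alpha$ is a one-parameter group, to pull the outer automorphism out of both integrals:
\[
\beta_x(F)(x,y) = \alpha_{-x}\!\left( \int_{-\infty}^{x} \Phi_s(t)\,\text{d}t - \left( \int_{y}^{\infty}\varphi(u)\,\text{d}u \right) \int_{\mathbb{R}} \Phi_s(z)\,\text{d}z \right), \qquad \Phi_s(t) := \alpha_t(F(t,s-t)),
\]
where the substitution $u = s-t$ produces the factor $\int_y^\infty \varphi$. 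The computational core is a single differentiation identity, valid for fixed $x$: combining the product rule, the chain rule $\frac{\text{d}}{\text{d}t}F(t,s-t) = (\partial_x F - \partial_y F)(t,s-t)$, and the relation $\alpha'_u(a) = \alpha_u(\alpha'_0(a))$ from \eqref{derivative of alpha}, one obtains
\[
\frac{\text{d}}{\text{d}t}\big[\alpha_{t-x}(F(t,s-t))\big] = \alpha_{t-x}\big( (\partial_x F - \partial_y F)(t,s-t) + \alpha'_0(F(t,s-t)) \big) = \alpha_{t-x}\big( (\iota F)(t,s-t) \big).
\]

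For the retraction $\beta_x \circ \iota = \text{Id}$ I would substitute $\iota F$ into $\beta_x$ and read off the antiderivative from the identity above. The inner integral of the correction term becomes $\int_{\mathbb{R}} \frac{\text{d}}{\text{d}z}\big[\alpha_{z-x}(F(z,s-z))\big]\,\text{d}z$, which vanishes because $F$ is rapidly decreasing and hence both endpoint limits at $z \to \pm\infty$ are zero; thus the entire correction term dies on the image of $\iota$. The surviving term telescopes by the fundamental theorem of calculus to $\alpha_0(F(x,s-x)) - \lim_{t\to-\infty}\alpha_{t-x}(F(t,s-t)) = F(x,y)$, using $s-x=y$. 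The same computation gives $\beta_y \circ \iota = \text{Id}$.

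The module-homomorphism properties I would check directly against the explicit actions in Lemma \ref{lemma2.3}. For the right $A$-action, $(F\circ a)(t,s-t) = F(t,s-t)\,\alpha_{s-t}(a)$, so $\alpha_{t-x}((F\circ a)(t,s-t)) = \alpha_{t-x}(F(t,s-t))\,\alpha_{s-x}(a)$; the factor $\alpha_{s-x}(a) = \alpha_y(a)$ is independent of the integration variable, so it pulls out of both integrals in the factored form above, yielding $\beta_x(F\circ a) = \beta_x(F)\circ a$. The left $\mathscr{S}(\mathbb{R},A;\alpha)$-action reduces, after the same factoring, to an application of Fubini to the convolution formula $(H\circ F)(x,y) = \int_{\mathbb{R}}\alpha_{-x}(H(z))F(x-z,y)\,\text{d}z$; the symmetric computations show that $\beta_y$ is an $A$-$\mathscr{S}(\mathbb{R},A;\alpha)$-bimodule homomorphism.

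The genuine obstacle is \textbf{well-definedness}, i.e.\ that $\beta_x(F)$ lies in $\mathscr{S}(\mathbb{R}^2, A)$ rather than merely in $C^\infty(\mathbb{R}^2, A)$. Along the lines $s = x+y = \text{const}$, the primitive $\int_{-\infty}^{x}\Phi_s$ tends to $\int_{\mathbb{R}}\Phi_s$ as $x \to +\infty$, so it does not decay on its own; the correction term is engineered to cancel exactly this constant, since $\int_y^\infty \varphi \to 1$ as $y \to -\infty$ and $\to 0$ as $y \to +\infty$. To make this quantitative I would bound the seminorms $\|\beta_x(F)\|_{k,l,m}$ directly: each derivative in $x$ or $y$ falls either onto $\Phi_s$, giving again rapidly decreasing integrands, or onto the compactly supported $\varphi$, and the polynomial weights are absorbed by playing the $m$-tempered bound $\|\alpha_{-x}(a)\|_m \le |p_m(x)|\,\|a\|_m$ from Proposition \ref{criteria for smooth act} against the rapid decay of $F$ in both arguments, uniformly in the transverse variable. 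Carrying these estimates through is the only place demanding real work; once the differentiation identity and the factored form are available, the remaining assertions are formal.
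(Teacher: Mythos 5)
Your proposal is correct and, for the two substantive identities, follows the same route as the paper: the engine in both is the differentiation identity $\frac{\text{d}}{\text{d}t}\bigl[\alpha_{t-x}(F(t,x+y-t))\bigr]=\alpha_{t-x}\bigl(\iota(F)(t,x+y-t)\bigr)$, after which $\beta_x\circ\iota=\text{Id}$ follows from the fundamental theorem of calculus together with the vanishing of $\int_{\mathbb{R}}\frac{\text{d}}{\text{d}z}\bigl[\alpha_{z-x}(F(z,x+y-z))\bigr]\,\text{d}z$, and the bimodule properties are verified exactly as you describe, by pulling $\alpha_y(a)$ (resp.\ $\alpha_{-x}(H(s))$, via Fubini) out of the integrals using the explicit actions of Lemma \ref{lemma2.3}. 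The one point where you genuinely diverge is well-definedness, and it is also the one point you leave as a sketch. The paper does not estimate seminorms at all: it computes that the integrand defining $\beta_x(F)$ has vanishing total integral in $t$ over $\mathbb{R}$ --- precisely because the correction term subtracts $\varphi(x+y-t)$ times the total mass --- and then invokes the vector-valued form of the ``Fourier-dual'' Hadamard lemma from Section 1, whose content is exactly that a Schwartz function with zero integral along a family of parallel lines admits a Schwartz antiderivative in that direction. Your direct approach via bounds on $\left\|\beta_x(F)\right\|_{k,l,m}$ would work and is more self-contained, but it amounts to reproving that lemma by hand in the case at hand (the delicate point being rapid decay jointly in all directions of $\mathbb{R}^2$, not just along the lines $x+y=\text{const}$), and as written it is a plan rather than a proof. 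Either carry those estimates out, or observe that your own computation of the cancellation is exactly the hypothesis of the dual Hadamard lemma, which then finishes well-definedness in two lines --- that is the reason the correction term is built the way it is.
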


\begin{proof}
	We'll start by proving that $\beta_x$ is well-defined: it is not entirely obvious from the construction that these integrals define functions which belong to $\mathscr{S}(\mathbb{R}^2, A)$. Let us prove that the corresponding integral over $\mathbb{R}$ equals zero, then we can use the vector-valued version of the Haramard's lemma to prove that the antiderivative lies in $\mathscr{S}(\mathbb{R}^2, A)$, as well.
	\[
	\begin{aligned}
	& \int_{\mathbb{R}} \left( \alpha_{t-x}(F(t, x+y - t)) - \varphi(x+y-t) \int_{\mathbb{R}} \alpha_{z-x}(F(z, x+y-z)) \text{d}z \right) \text{d}t = \\
	&= \int_{\mathbb{R}}  \alpha_{t-x}(F(t, x+y - t)) \text{d}t   - \int_{\mathbb{R}} \left( \varphi(x+y-t) \int_{\mathbb{R}} \alpha_{z-x}(F(z, x+y-z)) \text{d}z \right) \text{d}t = \\
	&= \int_{\mathbb{R}}  \alpha_{t-x}(F(t, x+y - t)) \text{d}t - \int_{\mathbb{R}} \varphi(x+y-t) \text{d}t  \int_{\mathbb{R}} \alpha_{z-x}(F(z, x+y-z)) \text{d}z = \\
	&= \int_{\mathbb{R}}  \alpha_{t-x}(F(t, x+y - t)) \text{d}t -  \int_{\mathbb{R}} \alpha_{z-x}(F(z, x+y-z)) \text{d}z = 0
	\end{aligned}
	\]
	Now we can prove that $\beta_x$ is a $\hat{\otimes}$-bimodule homomorphism. We notice that for every $H \in S, F \in \mathscr{S}(\mathbb{R}^2, A)$ we have
	\[
	\begin{aligned}
	\alpha_{t-x}((H \circ F)(t, x+y - t)) &= \alpha_{t-x}\left( \int_{\mathbb{R}} \alpha_{-t}(H(s)) F(t-s, x+y-t) \text{d}s \right) = \\ &= \int_{\mathbb{R}} \alpha_{-x}(H(s)) \alpha_{t-x}(F(t-s, x+y-t)) \text{d}s,
	\end{aligned}
	\]
	therefore, we have
	\[
	\small
	\begin{aligned}
		& \beta_x(H \circ F)(x, y) = \int_{-\infty}^x \left( \alpha_{t-x}(H \circ F(t, x+y - t)) - \varphi(x+y-t) \int_{\mathbb{R}} \alpha_{z-x}(H \circ F(z, x+y-z)) \text{d}z \right) \text{d}t = \\
		&= \int_{-\infty}^x \left( \int_{\mathbb{R}} \alpha_{-x}(H(s)) \alpha_{t-x}(F(t-s, x+y-t)) \text{d}s - \varphi(x+y-t) \int_{\mathbb{R}} \int_{\mathbb{R}} \alpha_{-x}(H(s)) \alpha_{z-x}(F(z-s, x+y-z)) \text{d}s \text{d}z \right) \text{d}t = \\
		&= \int_{-\infty}^x \left( \int_{\mathbb{R}} \alpha_{-x}(H(s)) \alpha_{t-x}(F(t-s, x+y-t)) \text{d}s - \varphi(x+y-t) \int_{\mathbb{R}} \alpha_{-x}(H(s)) \int_{\mathbb{R}}  \alpha_{z-x}(F(z-s, x+y-z)) \text{d}z \text{d}s \right) \text{d}t = \\
		&= \int_{-\infty}^x \left( \int_{\mathbb{R}} \alpha_{-x}(H(s)) \left(  \alpha_{t-x}(F(t-s, x+y-t)) - \varphi(x+y-t) \int_{\mathbb{R}}  \alpha_{z-x}(F(z-s, x+y-z)) \text{d}z \right)  \text{d}s \right) \text{d}t = \\
		&= \int_{\mathbb{R}} \left( \int_{-\infty}^x \alpha_{-x}(H(s)) \left(  \alpha_{t-x}(F(t-s, x+y-t)) - \varphi(x+y-t) \int_{\mathbb{R}}  \alpha_{z-x}(F(z-s, x+y-z)) \text{d}z \right)  \text{d}t \right) \text{d}s = \\
		&= \int_{\mathbb{R}} \alpha_{-x}(H(s)) \left( \int_{-\infty}^x \left(  \alpha_{t-x}(F(t-s, x+y-t)) - \varphi(x+y-t) \int_{\mathbb{R}}  \alpha_{z-x}(F(z-s, x+y-z)) \text{d}z \right)  \text{d}t \right) \text{d}s = \\
		&= \int_{\mathbb{R}} \alpha_{-x}(H(s)) \left( \int_{-\infty}^x \left(  \alpha_{t+s-x}(F(t, x+y-t-s)) - \varphi(x+y-t-s) \int_{\mathbb{R}}  \alpha_{z+s-x}(F(z, x+y-z-s)) \text{d}z \right)  \text{d}t \right) \text{d}s = \\
		&= \int_{\mathbb{R}} \alpha_{-x}(H(s)) \beta_x(F)(x-s,y) \text{d}s = (H \circ \beta_x(F))(x, y),
	\end{aligned}
	\]
	\[
	\begin{aligned}
	&\beta_x(F \circ a)(x, y) = \int_{-\infty}^x \left( \alpha_{t-x}((F \circ a)(t, x+y - t)) - \varphi(x+y-t) \int_{\mathbb{R}} \alpha_{z-x}((F \circ a)(z, x+y-z)) \text{d}z \right) \text{d}t = \\
	&= \int_{-\infty}^x \left( \alpha_{t-x}(F(t, x+y-t))\alpha_y(a) - \varphi(x+y-t) \int_{\mathbb{R}} \alpha_{z-x}(F(z, x+y-z))\alpha_y(a) \text{d}z \right) \text{d}t = \\
	&= \left( \int_{-\infty}^x \left( \alpha_{t-x}(F(t, x+y-t)) - \varphi(x+y-t) \int_{\mathbb{R}} \alpha_{z-x}(F(z, x+y-z)) \text{d}z \right) \text{d}t \right)  \alpha_y(a) = (\beta_x(F) \circ a)(x, y).
	\end{aligned}
	\]
	To check that $\beta_x$ is the right inverse to $\iota$, we have to assume that $F(x, y) = G(x)H(y)$. First of all, notice that
	\[
	\begin{aligned}
	& \frac{\text{d}}{\text{d} t} ( \alpha_{t-x}(F(t,x+y-t))) = \frac{\text{d}}{\text{d} t} (\alpha_{t-x}(G(t)) \alpha_{t-x}(H(x+y-t))) = \\
	&= (\alpha_{t-x}(G'(t))+\alpha'_{t-x}(G(t)))\alpha_{t-x}(H(x+y-t)) + \alpha_{t-x}(G(t))( \alpha'_{t-x}(H(x+y-t)) - \alpha_{t-x}(H'(x+y-t))) = \\
	&= \alpha_{t-x}(G'(t)H(x+y-t) + \alpha'_0(G(t))H(x+y-t) + G(t)\alpha'_0(H(x+y-t)) - G(t)H'(x+y-t)) \stackrel{\ref{eq4}}{=}\\
	&\stackrel{\ref{eq4}}{=} \alpha_{t-x}(T^{-1}((TG)')(t) H(x+y-t) - G(t) T((T^{-1}H)')(x+y-t)) = \alpha_{t-x} (\iota(F)(t, x+y-t)).
	\end{aligned}
	\]
	Therefore, we have
	\[
	\begin{aligned}
		&(\beta_x \circ \iota(F))(x, y) = \int_{-\infty}^x \left( \alpha_{t-x}(\iota(F)(t, x+y - t)) - \varphi(x+y-t) \int_{\mathbb{R}} \alpha_{z-x}(\iota(F)(z, x+y-z)) \text{d}z \right) \text{d}t = \\
		&= \int_{-\infty}^x \left(  \frac{\text{d}}{\text{d} t} ( \alpha_{t-x}(F(t,x+y-t))) - \varphi(x+y-t) \int_{\mathbb{R}} \frac{\text{d}}{\text{d} z} ( \alpha_{z-x}(F(z,x+y-z))) \text{d}z\right) \text{d}t = F(x, y).
	\end{aligned}
	\]
	The necessary computations for $\beta_y$ are, essentially, the same.
\end{proof}


By combining the Lemmas \ref{lemma2.1} -- \ref{lemma2.6}, we can formulate the following theorem:

\begin{theorem}
	\label{main theorem}
	Let $A$ be a self-induced Fr\'echet-Arens-Michael algebra with a smooth $m$-tempered action $\alpha$ of $\mathbb{R}$ on $A$. Then the following diagram is commutative, moreover, the rows are short exact sequences of $\mathscr{S}(\mathbb{R}, A; \alpha)$-bimodules which split in the categories $\mathscr{S}(\mathbb{R}, A; \alpha)$-$A$-Mod$(\textbf{Fr})$ and $A$-$\mathscr{S}(\mathbb{R}, A; \alpha)$-Mod$(\textbf{Fr})$:
	\[
	\begin{tikzcd}[column sep = scriptsize]
	0 \arrow[r] & \mathscr{S}(\mathbb{R}^2, A)_\alpha \arrow[r, "\iota"] & \mathscr{S}(\mathbb{R}^2, A)_\alpha \arrow[r, "\pi"] & \mathscr{S}(\mathbb{R}, A; \alpha)_\alpha \arrow[r] & 0, \\
	0 \ar[r] & \mathscr{S}(\mathbb{R}, A; \alpha)_\alpha \hat{\otimes}_A  \mathscr{S}(\mathbb{R}, A; \alpha)_\alpha \ar[r, "j"] \isoarrow{u} & \mathscr{S}(\mathbb{R}, A; \alpha)_\alpha \hat{\otimes}_A \mathscr{S}(\mathbb{R}, A; \alpha)_\alpha \ar[r, "m"] \isoarrow{u} & \mathscr{S}(\mathbb{R}, A; \alpha)_\alpha \ar[u, "\text{Id}"] \ar[r] & 0,
	\end{tikzcd}
	\] 
	where 
	\[
	\begin{aligned}
	\iota(F)(x, y) &= \left( \frac{\partial F }{\partial x} - \frac{\partial F}{\partial y}  \right)(x, y) + \alpha'_0(F(x, y)) & \text{ for any } F \in \mathscr{S}(\mathbb{R}^2, A) \\
	\pi(F)(x) &= \int_{\mathbb{R}} \alpha_y ( F(y,x-y) ) \text{d} y & \text{ for any } F \in \mathscr{S}(\mathbb{R}^2, A) \\
	j(F \otimes G) &= F' \otimes G - F \otimes T((T^{-1}G)') &\text{ for any } F, G \in \mathscr{S}(\mathbb{R}, A; \alpha) \\
	m(F \otimes G) &= F *_\alpha G & \text{ for any } F, G \in \mathscr{S}(\mathbb{R}, A; \alpha).
	\end{aligned}
	\]
\end{theorem}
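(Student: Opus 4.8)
The plan is to read the theorem off as an assembly of Lemmas~\ref{lemma2.1}--\ref{lemma2.6}, exactly as the transitional sentence suggests. The two nontrivial vertical arrows (left and middle) are the locally convex isomorphism $I_1 : S_\alpha \hat{\otimes}_A S_\alpha \to \mathscr{S}(\mathbb{R}^2, A)_\alpha$ constructed in the Proposition preceding Lemma~\ref{lemma2.3}; this is the unique place where the self-inducedness of $A$ (Definition~\ref{self-induced}) is used, since it permits replacing $\mathscr{S}(\mathbb{R}^2, A)$ by $\mathscr{S}(\mathbb{R}^2, A\hat{\otimes}_A A)$. Since the bimodule structure on $\mathscr{S}(\mathbb{R}^2, A)_\alpha$ was \emph{defined} through $I_1$ and computed explicitly in Lemma~\ref{lemma2.3}, $I_1$ is an isomorphism in both relevant bimodule categories; the right-hand vertical arrow is the identity. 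Commutativity of the diagram is precisely Lemma~\ref{lemma2.4}, which gives $\iota = I_1 \circ j \circ I_1^{-1}$ and $\pi = m \circ I_1^{-1}$. That $j, m$, and hence $\iota, \pi$, are $\mathscr{S}(\mathbb{R}, A; \alpha)$-$\hat{\otimes}$-bimodule and $A$-$\hat{\otimes}$-bimodule homomorphisms is Lemma~\ref{lemma2.2}, whose concluding identity $m \circ j = 0$ shows both rows are complexes, i.e. $\pi \circ \iota = 0$.

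Because the vertical maps are isomorphisms in both module categories, it suffices to prove that the \emph{top} row is a short exact sequence splitting in $\mathscr{S}(\mathbb{R}, A; \alpha)$-$A$-Mod$(\textbf{Fr})$ and in $A$-$\mathscr{S}(\mathbb{R}, A; \alpha)$-Mod$(\textbf{Fr})$, and then transport the conclusion along $I_1$ and the identity. Injectivity of $\iota$ follows from $\beta_x \circ \iota = \text{Id}$ (Lemma~\ref{lemma2.6}) and surjectivity of $\pi$ from $\pi \circ \rho_x = \text{Id}$ (Lemma~\ref{lemma2.5}). The substantive point is exactness in the middle: I stress that a complex whose outer maps are a split monomorphism and a split epimorphism need \emph{not} be exact (a three-term example with $\operatorname{im}\iota \subsetneq \ker\pi$ is easy to write down), so $\ker \pi = \operatorname{im} \iota$ genuinely requires argument and is the main obstacle.

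I would settle middle exactness and the splitting in one stroke by establishing the contracting-homotopy identity
\[
\iota \circ \beta_x + \rho_x \circ \pi = \text{Id}_{\mathscr{S}(\mathbb{R}^2, A)_\alpha},
\]
together with its $y$-analogue $\iota \circ \beta_y + \rho_y \circ \pi = \text{Id}$. Combined with $\beta_x \circ \iota = \text{Id}$, $\pi \circ \rho_x = \text{Id}$ and $\pi \circ \iota = 0$, this makes $e := \iota\beta_x$ an idempotent with $\text{Id} - e = \rho_x\pi$, whence $\mathscr{S}(\mathbb{R}^2, A)_\alpha = \operatorname{im}\iota \oplus \operatorname{im}\rho_x$ as objects of $\mathscr{S}(\mathbb{R}, A; \alpha)$-$A$-Mod$(\textbf{Fr})$. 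This simultaneously forces $\ker\pi = \operatorname{im}\iota$ (indeed $\pi(F)=0$ gives $F = \iota(\beta_x F)$) and exhibits the splitting; the $y$-version does the same in $A$-$\mathscr{S}(\mathbb{R}, A; \alpha)$-Mod$(\textbf{Fr})$. As all four maps are continuous bimodule homomorphisms (Lemmas~\ref{lemma2.5} and~\ref{lemma2.6}), the identity may be checked on the dense linear span of product functions $F(x,y) = G(x)H(y)$.

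The verification rests on the computation already isolated inside the proof of Lemma~\ref{lemma2.6}: the twisted antidiagonal section $\Theta(x,y,t) := \alpha_{t-x}(F(t, x+y-t))$ satisfies $(\partial_x - \partial_y + \alpha'_0)\Theta = 0$ (using the group law together with \eqref{derivation} and \eqref{derivative of alpha} in the forms $\alpha'_0 \circ \alpha_s = \alpha'_s = \alpha_s \circ \alpha'_0$), while $\partial_t \Theta = \alpha_{t-x}(\iota(F)(t,x+y-t))$. Applying $\iota$ to $\beta_x(F)$ and differentiating under the integral sign, the lower boundary of $\int_{-\infty}^x \Theta\,\text{d}t$ contributes exactly $\Theta(x,y,x) = F(x,y)$, the integrand of the antiderivative part is annihilated, and the $\varphi$-correction term contributes precisely $\varphi(y)\alpha_{-x}(\pi F(x+y)) = \rho_x(\pi F)(x,y)$ — which is why the two summands of the homotopy identity add up to $F$. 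The Fourier-dual Hadamard lemma is needed only to guarantee that $\beta_x$ actually lands in $\mathscr{S}(\mathbb{R}^2, A)$, as was already checked in Lemma~\ref{lemma2.6}. Conceptually, the relation $(\partial_x - \partial_y + \alpha'_0)\Theta = 0$ says that conjugating $\iota$ by this twist turns it into the plain directional derivative $\partial_x - \partial_y$, identifying the top row with the untwisted Ogneva-type sequence of \eqref{exactseqconv1}; I would present the homotopy identity as the clean, self-contained route and remark on this conjugation as the reason it succeeds.
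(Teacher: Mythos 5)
Your proposal is correct and follows essentially the same route as the paper: the author likewise assembles the theorem from Lemmas \ref{lemma2.1}--\ref{lemma2.6} and reduces everything to verifying the contracting-homotopy identity $\iota \circ \beta_x + \rho_x \circ \pi = \iota \circ \beta_y + \rho_y \circ \pi = \text{Id}_{\mathscr{S}(\mathbb{R}^2, A)_\alpha}$, invoking \cite[Proposition 3.1.8]{Helem1986} to conclude splitting and exactness. Your explicit remark that split mono plus split epi alone does not force middle exactness, and your idempotent decomposition $e = \iota\beta_x$, $\text{Id}-e = \rho_x\pi$, merely make the paper's appeal to that proposition self-contained (note only that the boundary term $\Theta(x,y,x)=F(x,y)$ arises at the \emph{upper} limit $t=x$ of the integral, not the lower one).
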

\begin{proof}
	In the previous lemmas we have constructed the sections $\rho_x, \rho_y, \beta_x, \beta_y$. The only thing that is left to check that $\iota \circ \beta_x + \rho_x \circ \pi = \iota \circ \beta_y +  \rho_y \circ \pi = \text{Id}_{\mathscr{S}(\mathbb{R}^2, A)_\alpha}$, then we use \cite[Proposition 3.1.8]{Helem1986}. 
	
	For any $F(x, y) \in \mathscr{S}(\mathbb{R}^2, A)_\alpha$ we have
	\[
	\begin{aligned}
	&(\iota \circ \beta_x)(F)(x, y) = \iota\left( \int_{-\infty}^x \left( \alpha_{t-x}(F(t, x+y-t))  - \varphi(x+y-t) \int_{\mathbb{R}} \alpha_{z-x}(F(z, x+y-z)) \text{d}z \right) \text{d}t \right) = \\
	&= F(x, y) - \varphi(y) \int_{\mathbb{R}} \alpha_{z-x}(F(z, x+y-z)) \text{d}z + \\ 
	&+ \int_{-\infty}^x \left( \frac{\partial }{\partial x} - \frac{\partial }{\partial y} \right)  \left( \alpha_{t-x}(F(t, x+y-t))  - \varphi(x+y-t) \int_{\mathbb{R}} \alpha_{z-x}(F(z, x+y-z)) \text{d}z \right) \text{d}t + \\
	&+ \int_{-\infty}^x \left( \alpha'_{t-x}(F(t, x+y-t))  - \varphi(x+y-t) \int_{\mathbb{R}} \alpha'_{z-x}(F(z, x+y-z)) \text{d}z \right) \text{d}t = \\
	&= F(x, y) - \varphi(y) \int_{\mathbb{R}} \alpha_{z-x}(F(z, x+y-z)) \text{d}z + \\ 
	&+ \int_{-\infty}^x \left( -\alpha'_{t-x}(F(t, x+y-t)) + \varphi(x+y-t) \int_{\mathbb{R}} \alpha'_{z-x}(F(z, x+y-z)) \text{d}z \right) \text{d}t + \\
	&+ \int_{-\infty}^x \left( \alpha'_{t-x}(F(t, x+y-t))  - \varphi(x+y-t) \int_{\mathbb{R}} \alpha'_{z-x}(F(z, x+y-z)) \text{d}z \right) \text{d}t = \\
	&= F(x, y) - \varphi(y) \int_{\mathbb{R}} \alpha_{z-x}(F(z, x+y-z)) \text{d}z,
	\end{aligned}
	\]
	\[
	\begin{aligned}
	(\rho_x \circ \pi)(F)(x) &= \varphi(y) \alpha_{-x}(\pi(F)(x+y)) = \varphi(y) \int_{\mathbb{R}} \alpha_{z-x}(F(z, x+y-z)) \text{d}z,
	\end{aligned}
	\]
	therefore, we have
	\[
	\iota \circ \beta_x + \rho_x \circ \pi(F)(x, y) = F(x, y).
	\]
	The argument for $\iota \circ \beta_y +  \rho_y \circ \pi$ is similar.
\end{proof}

In the case $G = \mathbb{T}$ we obtain the following theorem.

\begin{theorem}
	\label{main theorem2}
	Let $A$ be a self-induced Fr\'echet-Arens-Michael algebra with a smooth $m$-tempered action $\alpha$ of $\mathbb{T}$ on $A$. Then the following diagram is commutative, moreover, the rows are short exact sequences of $C^{\infty}(\mathbb{T}, A; \alpha)$-bimodules which split in the categories $C^{\infty}(\mathbb{T}, A; \alpha)$-$A$-Mod$(\textbf{Fr})$ and $A$-$C^{\infty}(\mathbb{T}, A; \alpha)$-Mod$(\textbf{Fr})$:
	\[
	\begin{tikzcd}[column sep = scriptsize]
	0 \arrow[r] & C^{\infty}(\mathbb{T}^2, A)_\alpha \arrow[r, "\iota"] & C^{\infty}(\mathbb{T}^2, A)_\alpha \arrow[r, "\pi"] & C^{\infty}(\mathbb{T}, A; \alpha)_\alpha \arrow[r] & 0, \\
	0 \ar[r] & C^{\infty}(\mathbb{T}, A; \alpha)_\alpha \hat{\otimes}_A  C^{\infty}(\mathbb{T}, A; \alpha)_\alpha \ar[r, "j"] \isoarrow{u} & C^{\infty}(\mathbb{T}, A; \alpha)_\alpha \hat{\otimes}_A C^{\infty}(\mathbb{T}, A; \alpha)_\alpha \ar[r, "m"] \isoarrow{u} & C^{\infty}(\mathbb{T}, A; \alpha)_\alpha \ar[u, "\text{Id}"] \ar[r] & 0,
	\end{tikzcd}
	\] 
	where 
	\[
	\begin{aligned}
	\iota(F)(x, y) &= \left( \frac{\partial F }{\partial x} - \frac{\partial F}{\partial y}  \right)(x, y) + \alpha'_0(F(x, y)) & \text{ for any } F \in C^{\infty}(\mathbb{T}^2, A) \\
	\pi(F)(x) &= \int_{\mathbb{T}} \alpha_y ( F(y,x-y) ) \text{d} y & \text{ for any } F \in C^{\infty}(\mathbb{T}^2, A) \\
	j(F \otimes G) &= F' \otimes G - F \otimes T((T^{-1}G)') &\text{ for any } F, G \in C^{\infty}(\mathbb{T}, A; \alpha) \\
	m(F \otimes G) &= F *_\alpha G & \text{ for any } F, G \in C^{\infty}(\mathbb{T}, A; \alpha).
	\end{aligned}
	\]
\end{theorem}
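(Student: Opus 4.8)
The plan is to transcribe the entire construction of the $G = \mathbb{R}$ case, carried out in Lemmas \ref{lemma2.1}--\ref{lemma2.6} and assembled in Theorem \ref{main theorem}, replacing $\mathbb{R}$ by $\mathbb{T} = \mathbb{R}/\mathbb{Z}$, the Schwartz spaces $\mathscr{S}(\mathbb{R}^n, A)$ by the smooth spaces $C^\infty(\mathbb{T}^n, A)$, and the $m$-tempered estimates by the bounded estimates supplied by the $\mathbb{T}$-version of Proposition \ref{criteria for smooth act}. The operator $T(F)(x) = \alpha_x(F(x))$ and its inverse $T^{-1}(F)(x) = \alpha_{-x}(F(x))$ are defined by the same pointwise formulas; since $\alpha$ is a $\mathbb{T}$-action they are periodic in $x$, hence genuine endomorphisms of $C^\infty(\mathbb{T}, A)$, and all the identities of Proposition \ref{properties of the operator T}, being purely local in $x$, remain valid verbatim. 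Likewise the bimodule $C^\infty(\mathbb{T}, A; \alpha)_\alpha$, the tensor product $C^\infty(\mathbb{T}, A; \alpha)_\alpha \hat{\otimes}_A C^\infty(\mathbb{T}, A; \alpha)_\alpha$, and the maps $m, j, \iota, \pi$ are given by the same expressions, with $\int_{\mathbb{R}}$ replaced by $\int_{\mathbb{T}}$.

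First I would re-derive the analogues of Lemmas \ref{lemma2.1}--\ref{lemma2.4} together with the locally convex isomorphism given by $I_1(F \otimes G)(x,y) = \alpha_{-x}(F(x)) G(y)$, identifying $C^\infty(\mathbb{T}, A; \alpha)_\alpha \hat{\otimes}_A C^\infty(\mathbb{T}, A; \alpha)_\alpha$ with $C^\infty(\mathbb{T}^2, A)$. These are the same pointwise computations as in the $\mathbb{R}$ case; the seminorm estimates are in fact simpler, because on the compact group $\mathbb{T}$ the polynomial weights $x^l$ disappear and the growth polynomials $p_{k,m}$ may be taken to be the constants $C_{k,m}$, so that every ``$< \infty$'' in those proofs is immediate. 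The isomorphism $I_1$ uses the self-inducedness of $A$ exactly as before, to replace $C^\infty(\mathbb{T}^2, A)$ by $C^\infty(\mathbb{T}^2, A \hat{\otimes}_A A)$ and to exploit the density of elementary tensors.

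The sections $\rho_x, \rho_y$ of Lemma \ref{lemma2.5} transcribe directly: fixing $\varphi \in C^\infty(\mathbb{T})$ with $\int_{\mathbb{T}} \varphi = 1$ and setting $\rho_x(F)(x,y) = \varphi(y) \alpha_{-x}(F(x+y))$ together with the symmetric $\rho_y$, the same integral identities give $\pi \circ \rho_x = \pi \circ \rho_y = \text{Id}$, whence $C^\infty(\mathbb{T}, A; \alpha)$ is projective on each side. The genuine obstacle is the analogue of Lemma \ref{lemma2.6}: the $\mathbb{R}$-formula for $\beta_x, \beta_y$ uses the antiderivative $\int_{-\infty}^x$, which has no meaning on the circle. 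The remedy is to integrate from a fixed basepoint over one period, $\beta_x(F)(x,y) = \int_0^x (\cdots)\, \mathrm{d}t$ with the same integrand, and the decisive point is to check that this is well-defined on $\mathbb{T}$, i.e. that the integrand has zero mean over a period in $t$. That mean-zero identity is precisely the computation already performed at the start of the proof of Lemma \ref{lemma2.6}, now with $\int_{\mathbb{T}}$ in place of $\int_{\mathbb{R}}$, and it relies only on $\int_{\mathbb{T}} \varphi = 1$; vanishing of the mean is exactly the hypothesis under which a smooth periodic antiderivative exists (the $\mathbb{T}$-version of the Fourier-dual Hadamard lemma). Thus $\beta_x, \beta_y$ land in $C^\infty(\mathbb{T}^2, A)$ and are bimodule homomorphisms with $\beta_x \circ \iota = \beta_y \circ \iota = \text{Id}$.

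With all sections in hand, the conclusion is identical to that of Theorem \ref{main theorem}: one verifies $\iota \circ \beta_x + \rho_x \circ \pi = \iota \circ \beta_y + \rho_y \circ \pi = \text{Id}_{C^\infty(\mathbb{T}^2, A)_\alpha}$ by the same cancellation of the two antiderivative terms against the $\varphi$-correction, and then invokes \cite[Proposition 3.1.8]{Helem1986} to deduce that both rows split in the asserted categories. The main thing to watch, beyond the basepoint and periodicity issue for $\beta_x, \beta_y$, is that no earlier step secretly used non-compactness of $\mathbb{R}$ or Schwartz decay; inspection shows every estimate only becomes easier on the compact group $\mathbb{T}$.
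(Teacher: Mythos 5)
Your proposal is correct and matches the paper's intent exactly: the paper gives no separate proof for $G=\mathbb{T}$, stating only that the case is handled "in the same way" as the $\mathbb{R}$ case of Theorem \ref{main theorem}, which is precisely the transcription you carry out. You in fact go a step beyond the paper by isolating and resolving the one genuine point of difference --- replacing the antiderivative $\int_{-\infty}^{x}$ in $\beta_x,\beta_y$ by $\int_0^x$ and checking that the mean-zero identity makes this well-defined (periodic) on $\mathbb{T}$.
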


\section{Obtaining upper estimates for homological dimensions of smooth crossed products by \texorpdfstring{$\mathbb{R}$}{R} and \texorpdfstring{$\mathbb{T}$}{T}}

\textbf{Remark.} Again, we provide the proofs only for the case $G = \mathbb{R}$, but the same arguments work for $G = \mathbb{T}$, as well.

Here we adapt the arguments in \cite{2017arXiv171206177K}, which were used to obtain the upper estimates to the non-unital case.

\begin{definition}
	\label{non-unital}
	Let $A$ be a $\hat{\otimes}$-algebra. Then a $\hat{\otimes}$-algebra $S$ together with a $A$-$\hat{\otimes}$-bimodule structure is called an $A$-$\hat{\otimes}$-algebra if $S \in A$-\textbf{mod}-$S$ and $S \in S$-\textbf{mod}-$A$.
\end{definition}

This definition works as expected in the unital case.

\begin{proposition}
	Let $A$ be a unital $\hat{\otimes}$-algebra. A $A$-$\hat{\otimes}$-bimodule structure on a unital $A$-$\hat{\otimes}$-algebra $S$ is uniquely defined by a (unital) algebra homomorphism $\eta : A \rightarrow S$:
	\[
	a \circ s = \eta(a) s, \quad s \circ a = s \eta(a)
	\]
	for every $a \in A, s \in S$.
\end{proposition}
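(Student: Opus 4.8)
The assertion is that an $A$-$\hat{\otimes}$-algebra structure on the unital $\hat{\otimes}$-algebra $S$ (in the sense of Definition \ref{non-unital}) is the same datum as a continuous unital algebra homomorphism $\eta : A \to S$, the correspondence being the displayed formulas. The plan is to produce two mutually inverse assignments. The uniqueness half is essentially free: evaluating $a \circ s = \eta(a) s$ at $s = 1_S$ forces $\eta(a) = a \circ 1_S$, so at most one $\eta$ can represent a given structure; and the companion formula $s \circ a = s \eta(a)$ at $s = 1_S$ forces $\eta(a) = 1_S \circ a$.

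For existence from a homomorphism, I would take a continuous unital $\eta$ and define $a \circ s := \eta(a) s$ and $s \circ a := s \eta(a)$. Joint continuity of both actions follows from continuity of $\eta$ together with joint continuity of the multiplication of $S$; the module and bimodule axioms reduce to associativity and unitality in $S$, e.g. $1_A \circ s = \eta(1_A) s = s$ and $(a \circ s) \circ b = \eta(a) s \eta(b) = a \circ (s \circ b)$. The two compatibilities in Definition \ref{non-unital} are then immediate from associativity: $(a \circ s) s' = \eta(a)(s s') = a \circ (s s')$ yields $S \in A\text{-}\textbf{mod}\text{-}S$, and $(s s') \circ a = (s s') \eta(a) = s (s' \circ a)$ yields $S \in S\text{-}\textbf{mod}\text{-}A$.

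Conversely, given the structure I would set $\eta(a) := a \circ 1_S$, which is continuous as the composite of the jointly continuous left action with $a \mapsto (a, 1_S)$. Using $S \in A\text{-}\textbf{mod}\text{-}S$ and unitality of $S$, one gets $a \circ s = a \circ (1_S s) = (a \circ 1_S) s = \eta(a) s$; this simultaneously shows $\eta$ is multiplicative, $\eta(ab) = a \circ (b \circ 1_S) = a \circ \eta(b) = \eta(a)\eta(b)$, while unitality of the left action gives $\eta(1_A) = 1_S$. Symmetrically $S \in S\text{-}\textbf{mod}\text{-}A$ gives $s \circ a = s(1_S \circ a)$. The main obstacle is the last matching step: to write the right action with the \emph{same} $\eta$ one must prove $1_S \circ a = a \circ 1_S$, i.e. that the unit of $S$ is an $A$-$\hat{\otimes}$-bimodule morphism $A \to S$. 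I expect to obtain this from the coherence of the unit $1_S$ with the two-sided $A$-action — precisely the content of viewing $S$ as a monoid in the category of $A$-$\hat{\otimes}$-bimodules, whose unit is by definition a bimodule map — after which $s \circ a = s(1_S \circ a) = s(a \circ 1_S) = s \eta(a)$ completes the identification.
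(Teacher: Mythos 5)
Your construction is the same as the paper's: set $\eta(a) = a \circ 1_S$, check multiplicativity, and use the two compatibilities of Definition \ref{non-unital} to recover $a \circ s = (a\circ 1_S)s$ and $s \circ a = s(1_S \circ a)$. You have also correctly isolated the one nontrivial point, namely the identity $a \circ 1_S = 1_S \circ a$, which the paper's own proof simply builds into the definition of $\eta$ without comment. Where your argument does not close is exactly at that point: you propose to deduce the identity from ``the coherence of the unit $1_S$ with the two-sided $A$-action,'' i.e.\ from viewing $S$ as a monoid in $A$-$\hat{\otimes}$-bimodules whose unit is a bimodule map. But Definition \ref{non-unital} asserts only that $S \in A$-\textbf{mod}-$S$ and $S \in S$-\textbf{mod}-$A$; it does not say that the unit is a bimodule morphism, and that property is not a consequence of the stated axioms. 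Invoking it is assuming the conclusion in disguise.

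Indeed the identity genuinely fails under the stated hypotheses: take $S = A$ unital, $a \circ s = as$, and $s \circ a = s\,\sigma(a)$ for a nontrivial continuous unital automorphism $\sigma$ of $A$. All bimodule axioms hold, $(a\circ s)s' = a\circ(ss')$ and $(ss')\circ a = s(s'\circ a)$ hold, and both actions are unital, yet $a \circ 1_S = a \neq \sigma(a) = 1_S \circ a$; this structure is encoded by the \emph{pair} of homomorphisms $(\mathrm{id}, \sigma)$ and not by a single $\eta$. So the correct statement is either that an $A$-$\hat{\otimes}$-algebra structure on unital $S$ corresponds to a pair of continuous unital homomorphisms $\eta_l, \eta_r : A \to S$ (with $a\circ s = \eta_l(a)s$, $s\circ a = s\eta_r(a)$), or one must add the hypothesis $a \circ 1_S = 1_S \circ a$ to get a single $\eta$. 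This defect is shared by the paper's proof, which writes $\eta(a) = a\circ 1_S = 1_S\circ a$ as if the equality were automatic; your write-up deserves credit for flagging it, but the proposed repair does not follow from the definitions as given. The remainder of your argument (uniqueness of $\eta$, the converse construction from a given $\eta$, and the continuity checks) is correct and agrees with the paper.
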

\begin{proof}
	Define $\eta$ as follows:	$\eta(a) = a \circ 1_S = 1_S \circ a.$ It is easy to see that $\eta$ is an algebra homomorphism. Also, we have
	\[
	\eta(a) s = (a \circ 1_S)s = a \circ (1_Ss) = a \circ s,
	\]
	\[
	s \eta(a) = s (a \circ 1_S) = s (1_S \circ a) = (s 1_S) \circ a = s \circ a
	\]
	for any $a\in A$, $s \in S$.
\end{proof}

As a corollary from Lemma \ref{lemma2.1} we get that the $A$-$\hat{\otimes}$-bimodule structure on $S_\alpha$ makes $\mathscr{S}(\mathbb{R}, A; \alpha)$ into a $A$-$\hat{\otimes}$-algebra.

\begin{proposition}
	\label{smooth extens is left and right free}
	Let $A$ be a Fr\'echet-Arens-Michael algebra, and let $\alpha$ be a $m$-tempered action of $\mathbb{R}$ on $A$. Consider the following multiplication on $\mathscr{S}(\mathbb{R}, A)$:
	\[
	(f *_\alpha' g)(x) = \int_\mathbb{R} \alpha_{-y}(f(x-y)) g(y) \text{d}y.
	\]
	Then the following locally convex algebra isomorphism takes place:
	\[
	i : \mathscr{S}(\mathbb{R}, A; \alpha) \rightarrow (\mathscr{S}(\mathbb{R}, A), *_\alpha'), \quad i(f)(x) = \alpha_{-x} (f(x)).
	\]
\end{proposition}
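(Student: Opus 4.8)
The plan is to exploit the observation that the map $i$ is nothing other than the operator $T^{-1}$ of Proposition \ref{properties of the operator T}: indeed $i(f)(x) = \alpha_{-x}(f(x)) = T^{-1}(f)(x)$. Hence $i$ is already known to be a \emph{topological linear} isomorphism of $\mathscr{S}(\mathbb{R},A)$ onto itself, with continuous inverse $T$, and the only genuine content of the statement is that $i$ intertwines the two multiplications, i.e. $i(f *_\alpha g) = i(f) *_\alpha' i(g)$. I would phrase this as a transport-of-structure argument: since $*_\alpha$ is a jointly continuous associative product (Theorem \ref{def of smooth ore extensions}) and $T^{-1}$ is a linear homeomorphism, the transported product $f \bullet g := T^{-1}(Tf *_\alpha Tg)$ automatically makes $(\mathscr{S}(\mathbb{R},A), \bullet)$ a $\hat{\otimes}$-algebra isomorphic to $\mathscr{S}(\mathbb{R}, A; \alpha)$ via $T^{-1} = i$. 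Thus it suffices to verify the single identity $f \bullet g = f *_\alpha' g$; this simultaneously establishes that $*_\alpha'$ is a well-defined jointly continuous associative product and that $i$ is an algebra isomorphism, with no need to check joint continuity or associativity of $*_\alpha'$ by hand.

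The heart of the proof is therefore a direct computation. Unwinding the definitions and using that each $\alpha_s$ is a continuous algebra automorphism with $\alpha_s \alpha_t = \alpha_{s+t}$, one writes
\[
i(f *_\alpha g)(x) = \alpha_{-x}\!\left( \int_{\mathbb{R}} f(y)\,\alpha_y(g(x-y))\, \text{d}y \right) = \int_{\mathbb{R}} \alpha_{-x}(f(y))\, \alpha_{y-x}(g(x-y))\, \text{d}y,
\]
where $\alpha_{-x}$ has been pulled inside the integral (it is continuous and linear, so it commutes with the vector-valued integral) and distributed across the product in the integrand (it is an algebra homomorphism). On the other side, one computes $i(f) *_\alpha' i(g)$ and uses $\alpha_{-y}\circ\alpha_{-(x-y)} = \alpha_{-x}$ to obtain $\int_{\mathbb{R}} \alpha_{-x}(f(x-y))\,\alpha_{-y}(g(y))\,\text{d}y$. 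The substitution $z = x-y$ then matches this with the expression above, giving the desired equality. (Equivalently, one may compute $f\bullet g = T^{-1}(Tf *_\alpha Tg)$ directly and reach the explicit formula for $*_\alpha'$ after the same change of variable.)

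I expect no deep obstacle here; the argument is essentially bookkeeping with the cocycle relation $\alpha_s\alpha_t=\alpha_{s+t}$ and one change of variable. The two points that deserve a word of justification are: first, that the continuous automorphism $\alpha_{-x}$ may be moved through the integral and applied termwise to the product in the integrand, which rests on the continuity, linearity, and multiplicativity of $\alpha_{-x}$ together with the fact that the integrand is a Schwartz-class $A$-valued function; and second, that $i = T^{-1}$ genuinely maps $\mathscr{S}(\mathbb{R},A)$ into itself, which is exactly the content of Proposition \ref{properties of the operator T} and is where the temperedness of $\alpha$ enters. Everything else (joint continuity and associativity of $*_\alpha'$, bijectivity and bicontinuity of $i$) is inherited for free from the transport-of-structure framework.
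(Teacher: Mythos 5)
Your proof is correct and follows essentially the same route as the paper's: both reduce the statement to the single identity $i(f *_\alpha g) = i(f) *_\alpha' i(g)$, verified by pulling the continuous automorphism $\alpha_{-x}$ through the vector-valued integral, using the cocycle relation $\alpha_s\alpha_t=\alpha_{s+t}$, and making one change of variable. The identification $i = T^{-1}$ and the transport-of-structure packaging are pleasant refinements (the paper simply asserts that $i$ is ``obviously'' a topological isomorphism of locally convex spaces), but the mathematical content is the same.
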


\begin{proof}
	The mapping $i$ is, obviously, a topological isomorphism of locally convex spaces. Now notice that
	\[
	\begin{split}
	(i(f) *' i(g))(x) 
	&= \int_\mathbb{R} \alpha_{-y} (i(f)(x-y)) i(g)(y) \text{d}y = \int_\mathbb{R} \alpha_{-x} (f(x-y)) \alpha_{-y}(g(y)) \text{d}y = \\
	&= \int_\mathbb{R} \alpha_{-x} (f(-y)) \alpha_{-x-y}(g(y+x)) \text{d} y = \alpha_{-x} \left( \int_\mathbb{R} f(-y) \alpha_{-y}(g(y+x)) \text{d}y \right) = \\
	&= i(f * g)(x), 
	\end{split}
	\]
	therefore, $i$ is an algebra homomorphism.
\end{proof}
\begin{corollary}
	\label{corollary1}
	Define the $A$-$\hat{\otimes}$-bimodule and $(\mathscr{S}(\mathbb{R}, A), *_\alpha')$-$\hat{\otimes}$-bimodule ${}_{\alpha^{-1}} \mathscr{S}(\mathbb{R}, A; \alpha) = {}_{\alpha^{-1}} S$ as follows: ${}_{\alpha^{-1}} S$ coincides with $\mathscr{S}(\mathbb{R}, A)$ as a LCS, and
	\[
	\begin{aligned}
	(F \circ a)(x) &= F(x) a, & \quad a \circ F(x) &= \alpha_{-x}(a) F(x) \quad & \text{ for any } &a \in A, F \in {}_{\alpha^{-1}} S \\
	(F \circ G)(x) &= (F *_\alpha' G)(x), & \quad (G \circ F)(x) &= (G *_\alpha' F)(x) \quad & \text{ for any } &F \in {}_{\alpha^{-1}} S, G \in (\mathscr{S}(\mathbb{R}, A), *_\alpha').
	\end{aligned}
	\]
	Then the map 
	\[
	S_\alpha \longrightarrow {}_{\alpha^{-1}} S, \quad F(x) \longmapsto \alpha_{-x}(F(x)),
	\]
	is an isomorphism of $A$-$\hat{\otimes}$-bimodules.	As a corollary, $S_\alpha$ is projective as a left and right $A$-$\hat{\otimes}$-bimodule.
\end{corollary}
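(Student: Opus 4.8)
The plan is to observe first that the map in question, call it $\Phi(F)(x)=\alpha_{-x}(F(x))$, is nothing but the operator $T^{-1}$ from Proposition \ref{properties of the operator T} (equivalently, the underlying linear map of the isomorphism $i$ of Proposition \ref{smooth extens is left and right free}, since $i(f)(x)=\alpha_{-x}(f(x))$). Consequently there is nothing to prove about continuity or bijectivity: $\Phi$ is already known to be a topological isomorphism of locally convex spaces, with continuous inverse $T$, $T(F)(x)=\alpha_x(F(x))$. It therefore remains only to check that $\Phi$ intertwines the two one-sided $A$-actions, and then to read off projectivity.

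For the bimodule-map verification I would simply compute on each side, using that every $\alpha_{-x}$ is an algebra automorphism of $A$ with $\alpha_{-x}\circ\alpha_x=\mathrm{id}$. For the left action, on $S_\alpha$ we have $a\circ F(x)=aF(x)$, so multiplicativity of $\alpha_{-x}$ gives
\[
\Phi(a\circ F)(x)=\alpha_{-x}\bigl(a\,F(x)\bigr)=\alpha_{-x}(a)\,\alpha_{-x}(F(x))=\bigl(a\circ\Phi(F)\bigr)(x),
\]
which is exactly the twisted left action $(a\circ G)(x)=\alpha_{-x}(a)G(x)$ on ${}_{\alpha^{-1}}S$. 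For the right action, on $S_\alpha$ we have $(F\circ a)(x)=F(x)\alpha_x(a)$, whence
\[
\Phi(F\circ a)(x)=\alpha_{-x}\bigl(F(x)\alpha_x(a)\bigr)=\alpha_{-x}(F(x))\,\alpha_{-x}(\alpha_x(a))=\Phi(F)(x)\,a=\bigl(\Phi(F)\circ a\bigr)(x),
\]
matching the untwisted right action on ${}_{\alpha^{-1}}S$. The design of the construction is precisely that conjugating by $T$ cancels the $\alpha_x$-twists built into the actions on $S_\alpha$, so these two one-line computations are the whole content of the isomorphism claim.

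Finally, for projectivity I would transport the question through $\Phi$ and argue inside ${}_{\alpha^{-1}}S$, where both one-sided actions have become the plain pointwise regular actions: as a right $A$-module ${}_{\alpha^{-1}}S$ is just $\mathscr{S}(\mathbb{R})\hat{\otimes}A$ with $A$ acting by right multiplication on the second tensor factor, and symmetrically on the left. Since $\Phi$ is an $A$-$\hat{\otimes}$-bimodule isomorphism, $S_\alpha$ then inherits projectivity on each side from ${}_{\alpha^{-1}}S$. The delicate step — and the one I expect to be the real obstacle — is justifying that $\mathscr{S}(\mathbb{R})\hat{\otimes}A$ with the regular $A$-action is projective in the non-unital sense of Definition \ref{nonunitaldef}: there the free objects are the modules $A_+\hat{\otimes}E$, so the claim reduces to the projectivity of $A$ as a module over itself. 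This is exactly where the self-inducedness hypothesis on $A$ must enter, making the relevant induced objects $A\hat{\otimes}E$ available and allowing the action map $A_+\hat{\otimes}A\to A$ to be split by a one-sided $A$-module morphism; producing that splitting explicitly is the only part of the argument that is not a direct substitution.
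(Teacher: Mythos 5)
Your treatment of the isomorphism claim is correct and is essentially all there is to it: $\Phi=T^{-1}$ is already a topological isomorphism by Propositions \ref{properties of the operator T} and \ref{smooth extens is left and right free}, and your two one-line computations (using only that each $\alpha_{-x}$ is an automorphism with $\alpha_{-x}\circ\alpha_x=\mathrm{id}$) correctly match the twisted actions on $S_\alpha$ with the actions on ${}_{\alpha^{-1}}S$. The paper gives no separate proof of this corollary, and this part of your argument is the intended one.

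The projectivity claim is where there is a genuine gap, which you have correctly located but not closed. Your reduction is right: as a right $A$-module, ${}_{\alpha^{-1}}S\cong\mathscr{S}(\mathbb{R})\hat{\otimes}A$ with $A$ acting by right multiplication on the second factor, and (by pairing against a continuous functional on $\mathscr{S}(\mathbb{R})$ taking the value $1$ at some fixed $f_0$) such a module is projective in the sense of Definition \ref{nonunitaldef} if and only if $A$ itself is projective as a right $A$-$\hat{\otimes}$-module, i.e.\ iff the canonical surjection $A\hat{\otimes}A_+\to A$ admits a continuous right $A$-module section. But your closing assertion that self-inducedness ``must'' supply this section is not justified and is false in general: Definition \ref{self-induced} says that the map $A\hat{\otimes}_A A\to A$ out of the \emph{balanced} tensor product is an isomorphism, which neither produces nor follows from a splitting of the \emph{unbalanced} free cover. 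The two conditions genuinely diverge: a commutative $C^*$-algebra $C_0(\Omega)$ (a Banach, hence Fr\'echet--Arens--Michael, algebra with bounded approximate identity) is self-induced, yet by Helemskii's theorem it is projective over itself only when $\Omega$ is paracompact. So the splitting you defer to the last sentence cannot be manufactured from the stated hypotheses; what is actually needed is the extra assumption that $A$ is projective as a one-sided module over itself, or an explicit section in the style of the Ogneva--Helemskii maps built from the bump function $\varphi$. Since the paper states this corollary without proof and later relies on it (it is the hypothesis fed into Lemma \ref{l3.4} inside Lemma \ref{estimate for essential modules}), your difficulty points at a real issue rather than at a trick you missed --- but as written, the final claim of the corollary is not established by your argument.
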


\begin{definition}
	Let $A$ be a $\hat{\otimes}$-algebra. A left $A$-$\hat{\otimes}$-module $M$ is called essential, if the canonical morphism $\eta_M : A \hat{\otimes}_A M \longrightarrow M$ is an isomorphism of left $A$-$\hat{\otimes}$-modules.
\end{definition}

\begin{lemma}
	\label{essential}
	Let $A$ be a self-induced Fr\'echet-Arens-Michael algebra together with an $m$-tempered $\mathbb{R}$-action $\alpha$ of $\mathbb{R}$. Then the module $\mathscr{S}(\mathbb{R}, A; \alpha)_\alpha$ is an essential left and right $A$-$\hat{\otimes}$-module.
\end{lemma}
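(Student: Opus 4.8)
The plan is to show that the canonical morphism $\eta_{S_\alpha} : A \hat{\otimes}_A \mathscr{S}(\mathbb{R}, A; \alpha)_\alpha \longrightarrow \mathscr{S}(\mathbb{R}, A; \alpha)_\alpha$, given on elementary tensors by $a \otimes F \mapsto a \circ F$, is a topological isomorphism, and likewise for the right action. Self-inducedness of $A$ (Definition \ref{self-induced}) will enter exactly once, through the invertibility of $m : A \hat{\otimes}_A A \to A$. First I would recall that, by the Proposition identifying $\mathscr{S}(\mathbb{R}) \hat{\otimes} A$ with $\mathscr{S}(\mathbb{R}, A)$, the untwisted left action $a \circ F(x) = aF(x)$ on $S_\alpha$ is carried to left multiplication on the second factor: under $S_\alpha \cong \mathscr{S}(\mathbb{R}) \hat{\otimes} A$ one has $a \circ (f \otimes b) = f \otimes (ab)$. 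Thus, as a left $A$-module, $S_\alpha \cong \mathscr{S}(\mathbb{R}) \hat{\otimes} {}_A A$, where $A$ acts only on the regular module ${}_A A$ and $\mathscr{S}(\mathbb{R})$ is an inert coefficient space.

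Next I would invoke the commutation of the balanced tensor product with an inert projective factor: for any complete LCS $E$ and left $A$-module $N$ there is a natural isomorphism $A \hat{\otimes}_A (E \hat{\otimes} N) \cong E \hat{\otimes} (A \hat{\otimes}_A N)$, proven by matching universal properties (continuous $A$-balanced maps out of $A \times (E \hat{\otimes} N)$ correspond to continuous maps out of $E \hat{\otimes} (A \hat{\otimes}_A N)$, and conversely). Applying this with $E = \mathscr{S}(\mathbb{R})$ and $N = {}_A A$ yields
\[
A \hat{\otimes}_A S_\alpha \;\cong\; A \hat{\otimes}_A (\mathscr{S}(\mathbb{R}) \hat{\otimes} A) \;\cong\; \mathscr{S}(\mathbb{R}) \hat{\otimes} (A \hat{\otimes}_A A).
\]
Under this chain the canonical map becomes $\mathrm{Id}_{\mathscr{S}(\mathbb{R})} \hat{\otimes}\, m$, since $\eta_{S_\alpha}(a \otimes (f \otimes b)) = f \otimes (ab) = (\mathrm{Id} \hat{\otimes} m)(f \otimes (a \otimes b))$. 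Because $A$ is self-induced, $m$ is an isomorphism, hence so is $\mathrm{Id} \hat{\otimes} m$, and therefore $\eta_{S_\alpha}$ is an isomorphism of left $A$-$\hat{\otimes}$-modules.

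For the right action I would pass to the model ${}_{\alpha^{-1}} S$ of Corollary \ref{corollary1}, which is isomorphic to $S_\alpha$ as an $A$-$\hat{\otimes}$-bimodule and in which the right action $(F \circ a)(x) = F(x) a$ is untwisted. The identical argument, now writing ${}_{\alpha^{-1}} S \cong \mathscr{S}(\mathbb{R}) \hat{\otimes} A_A$ and using the commutation isomorphism on the right-hand factor, reduces right essentiality to the invertibility of $m : A \hat{\otimes}_A A \to A$ once more.

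The main obstacle is the bookkeeping, not the algebra: one must verify that under the commutation isomorphism the \emph{canonical} morphism $\eta$ (rather than merely some abstract isomorphism of the two objects) is precisely the one induced by $m$, since it is only this identification that makes self-inducedness applicable. As elsewhere in this section, this has to be checked on the dense linear span of elementary tensors $f \otimes a$ and then extended by continuity, exactly as in the verification that $I_1$ is well defined.
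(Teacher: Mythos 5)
Your proposal is correct and follows essentially the same route as the paper: identify $S_\alpha$ as an untwisted $\mathscr{S}(\mathbb{R})\hat{\otimes}A$ (resp.\ $A\hat{\otimes}\mathscr{S}(\mathbb{R})$) over the acting side, commute the balanced tensor product past the inert Schwartz factor so that the canonical morphism becomes $\mathrm{Id}\hat{\otimes}\,m$, invoke self-inducedness, verify the identification on the dense span of elementary tensors, and reduce the right-module case to ${}_{\alpha^{-1}}S$ via Corollary \ref{corollary1}. The only difference is cosmetic (the order of the tensor factors and making the commutation isomorphism an explicitly stated intermediate step, which the paper leaves implicit).
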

\begin{example}
	If $A$ is a self-induced $\hat{\otimes}$-algebra, then for any left $A$-$\hat{\otimes}$-module $M$ the module $A \hat{\otimes}_A M$ is essential.
\end{example}
\begin{proof}
	Recall that the mapping $i : A \hat{\otimes} \mathscr{S}(\mathbb{R})_\alpha \longrightarrow \mathscr{S}(\mathbb{R}, A; \alpha)$, $i(a \otimes f)(x) = f(x)a$, is an isomorphism of left $A$-$\hat{\otimes}$-modules. Therefore, we can write the following composition of isomorphisms:
	\[
	A \hat{\otimes}_A \mathscr{S}(\mathbb{R}, A; \alpha)_\alpha \xrightarrow{i^{-1}} A \hat{\otimes}_A A \hat{\otimes} \mathscr{S}(\mathbb{R}) \xrightarrow{m} A \hat{\otimes} \mathscr{S}(\mathbb{R}) \xrightarrow{i} \mathscr{S}(\mathbb{R}, A; \alpha)_\alpha.
	\]
	Notice that $i^{-1} \circ m \circ i(a \otimes f(x)b) = f(x)ab = \eta_M(a \otimes f(x)b)$, so $\eta_M$ coincides with $i^{-1} \circ m \circ i$ on a dense subset, therefore, $\eta_M$ is an isomorphism of left $A$-$\hat{\otimes}$-modules.
	The same argument shows that ${}_{\alpha^{-1}} \mathscr{S}(\mathbb{R}, A; \alpha)$ is an essential right $A$-$\hat{\otimes}$-module, but ${}_{\alpha^{-1}} \mathscr{S}(\mathbb{R}, A; \alpha) \cong \mathscr{S}(\mathbb{R}, A; \alpha)_\alpha$.
\end{proof}

%

\begin{lemma}
	\label{projective1}
	Let $A$ be a $\hat{\otimes}$-algebra and let $S$ be an $A$-$\hat{\otimes}$-algebra.
	\begin{enumerate}[label=(\arabic*)]
		\item Let $X$ be a projective right $A$-$\hat{\otimes}$-module. Then the module $X \hat{\otimes}_A S$ is a projective right $S$-$\hat{\otimes}$-module. Similarly, if $X$ is a projective left $A$-$\hat{\otimes}$-module, then $S \hat{\otimes}_A X$ is a projective left $S$-$\hat{\otimes}$-module.
		\item Let $X$ be a projective $A$-$\hat{\otimes}$-bimodule. Then the module $S \hat{\otimes}_A X \hat{\otimes}_A S$ is a projective $S$-$\hat{\otimes}$-bimodule.
	\end{enumerate}
\end{lemma}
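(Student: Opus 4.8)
The plan is to prove both parts at once by reducing to the base change of \emph{free} modules and then invoking the standard characterization of relative projectivity: in these categories a module is projective if and only if it is a retract (a direct summand realized by continuous module morphisms) of a free module, where ``free'' means a module of the form $E \hat{\otimes} A_+$, $A_+ \hat{\otimes} E$, or $A_+ \hat{\otimes} E \hat{\otimes} A_+$ for some $E \in \textbf{LCS}$. Each of the three functors appearing in the statement, namely $- \hat{\otimes}_A S$, $S \hat{\otimes}_A -$, and $S \hat{\otimes}_A - \hat{\otimes}_A S$, is additive and continuous, hence carries a retraction to a retraction. Thus if $X$ is a retract of a free module $F$, the image of $X$ is a retract of the image of $F$, and it suffices to treat the case where $X$ itself is free.

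Next I would compute the image of a free module. Using associativity of the balanced tensor product together with the identification $A_+ \hat{\otimes}_A S \cong S$ (which follows from the coincidence $\hat{\otimes}_A = \hat{\otimes}_{A_+}$ on unital $A_+$-modules and the unit property $A_+ \hat{\otimes}_{A_+} S \cong S$), I obtain for part (1)
\[
(E \hat{\otimes} A_+) \hat{\otimes}_A S \;\cong\; E \hat{\otimes} (A_+ \hat{\otimes}_A S) \;\cong\; E \hat{\otimes} S
\]
as right $S$-modules, and analogously $S \hat{\otimes}_A (A_+ \hat{\otimes} E \hat{\otimes} A_+) \hat{\otimes}_A S \cong S \hat{\otimes} E \hat{\otimes} S$ as $S$-bimodules for part (2). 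So after the reduction everything comes down to showing that $E \hat{\otimes} S$ is projective as a right (resp. left) $S$-module and that $S \hat{\otimes} E \hat{\otimes} S$ is projective as an $S$-bimodule.

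This last point is the heart of the argument, and the essential input is that $S$ is \emph{relatively projective over itself} --- which for the crossed product $S = \mathscr{S}(\mathbb{R}, A; \alpha)$ is precisely the content of Lemma \ref{lemma2.5} together with its one-sided analogues. Concretely, self-projectivity means that $S$ is a retract of a free module $V \hat{\otimes} S_+$; tensoring that retraction with $\text{id}_E$ exhibits $E \hat{\otimes} S$ as a retract of the free right $S$-module $E \hat{\otimes} V \hat{\otimes} S_+$, so $E \hat{\otimes} S$ is projective. For the bimodule assertion I would combine the two sides: writing $S$ as a retract of $S_+ \hat{\otimes} V_1$ on the left and of $V_2 \hat{\otimes} S_+$ on the right, the tensor product of these two retractions (a left module map on the left factor, a right module map on the right factor, which commute) realizes $S \hat{\otimes} E \hat{\otimes} S$ as a retract of the free $S$-bimodule $S_+ \hat{\otimes} (V_1 \hat{\otimes} E \hat{\otimes} V_2) \hat{\otimes} S_+$, hence as a projective $S$-bimodule.

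The main obstacle is exactly this third step. The functors do \emph{not} send free $A$-modules to free $S$-modules, only to the ``half-free'' modules $E \hat{\otimes} S$, whose projectivity over $S$ is not a formal consequence of additivity but genuinely rests on $S$ being projective over itself. Conceptually this is the same phenomenon as the requirement that the right adjoint $\text{Hom}_S(S, -)$ of $- \hat{\otimes}_A S$ be exact --- a condition equivalent to $S$ being a projective one-sided $S$-module --- and one could instead phrase the whole proof through that adjunction; I would nonetheless prefer the explicit retract argument above, since it keeps the splittings visible and handles the bimodule case by the same mechanism as the one-sided cases.
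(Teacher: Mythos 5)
Your proposal follows the same route as the paper's proof: reduce to the free case by observing that the base-change functors are additive and hence preserve retracts, identify $(E \hat{\otimes} A_+) \hat{\otimes}_A S$ with $E \hat{\otimes} S$ via $A_+ \hat{\otimes}_A S \cong S$ (and likewise $S \hat{\otimes}_A (A_+ \hat{\otimes} E \hat{\otimes} A_+) \hat{\otimes}_A S \cong S \hat{\otimes} E \hat{\otimes} S$), and then argue that these ``half-free'' modules are projective over $S$. The difference lies entirely in how that last step is treated, and your version is the more careful one. The paper asserts that $E \hat{\otimes} S_+ \to E \hat{\otimes} S$ is a retraction of right $S$-modules, and in the bimodule case cites \cite[Proposition 4.1.4]{Helem1986}; but the projectivity of $E \hat{\otimes} S$ as a right $S$-module is equivalent (take $E = \mathbb{C}$) to the projectivity of $S$ as a right module over itself, which is not automatic for non-unital $S$ and does not appear among the hypotheses of the lemma. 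You correctly isolate this as the crux and import it as an additional assumption, supplied for the crossed product by Lemma \ref{lemma2.5} and Corollary \ref{corollary1}. That diagnosis is right, and the hypothesis is genuinely needed: with $A = \mathbb{C}$ and $S$ the one-dimensional algebra with zero multiplication (a legitimate $A$-$\hat{\otimes}$-algebra in the sense of Definition \ref{non-unital}), one has $S_+ \cong \mathbb{C}[\epsilon]/(\epsilon^2)$ and $A \hat{\otimes}_A S \cong S$ is the residue module of a local ring, which is not projective, so the lemma as literally stated fails. Your retract-of-retracts bookkeeping in the bimodule case (a left-module splitting on the left tensor factor combined with a right-module splitting on the right one) is also correct and substitutes for the citation of \cite[Proposition 4.1.4]{Helem1986}. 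In short: same approach as the paper, with the single non-formal step made explicit instead of elided, at the cost of strengthening the hypotheses to what the statement actually requires.
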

\begin{proof}
	\begin{enumerate}[label=(\arabic*)]
		\item The module $X$ is projective, therefore, there is a retraction $\sigma : E \hat{\otimes} A_+ \longrightarrow X$. But then the map
		\[
		E \hat{\otimes} S_+ \longrightarrow E \hat{\otimes} S \cong E \otimes A_+ \hat{\otimes}_A S \longrightarrow X \hat{\otimes}_A S  
		\]
		is a composition of retractions, and retracts of free modules are projective. Proof for the left modules is similar.
		\item The bimodule $X$ is projective, therefore, there is a retraction $\sigma : A_+ \hat{\otimes} E \hat{\otimes} A_+ \longrightarrow X$. Then the map
		\[
		S \hat{\otimes} E \hat{\otimes} S \longrightarrow S \hat{\otimes}_A A_+ \hat{\otimes} E \hat{\otimes} A_+ \hat{\otimes}_A S \longrightarrow S \hat{\otimes}_A X \hat{\otimes}_A S
		\]
		is a composition of retractions. Due to \cite[Proposition 4.1.4]{Helem1986}, the $S$-$\hat{\otimes}$-bimodule $S \hat{\otimes} E \hat{\otimes} S$ is projective, and retracts of projective modules are projective.
	\end{enumerate}
\end{proof}

\begin{lemma}
	\label{admis1}
	Let $A$ be a $\hat{\otimes}$-algebra. Also let $\{M, d \}$ denote an admissible sequence of right $A$-$\hat{\otimes}$-modules. If $X$ is a projective left $A$-$\hat{\otimes}$-module, then the complex $\{ M \hat{\otimes}_A X, d \otimes \text{Id} \}$ splits in \textbf{LCS}.
	
	Similarly, for every projective right $A$-$\hat{\otimes}$-module $X$ the complex $\{ X \hat{\otimes} M, d \otimes \text{Id} \}$ splits in \textbf{LCS}.
\end{lemma}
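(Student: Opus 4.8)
The plan is to reduce the statement to the case of a \emph{free} module $X$ and then exploit the functoriality of the completed projective tensor product. Recall that, by Definition \ref{nonunitaldef}, a projective left $A$-$\hat{\otimes}$-module $X$ is precisely a retract of a free one: there exist $E \in \mathbf{LCS}$ and morphisms of left $A$-$\hat{\otimes}$-modules $i : X \to A_+ \hat{\otimes} E$ and $p : A_+ \hat{\otimes} E \to X$ with $p \circ i = \text{Id}_X$. Applying the functor $M \hat{\otimes}_A (-)$ degreewise yields continuous chain maps $\text{Id} \otimes i$ and $\text{Id} \otimes p$ between the complexes $\{ M \hat{\otimes}_A X, d \otimes \text{Id}\}$ and $\{ M \hat{\otimes}_A (A_+ \hat{\otimes} E), d \otimes \text{Id}\}$ whose composite is the identity on the former. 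Conjugating a contracting homotopy $s$ of the larger complex by $i$ and $p$ produces the homotopy $(\text{Id} \otimes p)\, s\, (\text{Id} \otimes i)$ of the smaller one, which satisfies the homotopy identity because $\text{Id} \otimes i$ and $\text{Id} \otimes p$ are chain maps; thus a retract of an $\mathbf{LCS}$-split complex is again $\mathbf{LCS}$-split, and it suffices to treat the free case $X = A_+ \hat{\otimes} E$.

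For the free case I would use the natural isomorphism $M \hat{\otimes}_A (A_+ \hat{\otimes} E) \cong M \hat{\otimes} E$, obtained from $M \hat{\otimes}_{A_+} A_+ \cong M$ (regarding $M$ as a unital right $A_+$-module via $A$-$\textbf{mod} \cong A_+$-$\textbf{unmod}$) together with associativity of the tensor product. Under this identification the differential $d \otimes \text{Id}$ corresponds to $d \hat{\otimes} \text{Id}_E$ on $M \hat{\otimes} E$. Now admissibility of $\{M, d\}$ means exactly that it splits in $\mathbf{LCS}$, i.e. there are continuous \emph{linear} maps $s_n : M_n \to M_{n+1}$ satisfying $d_n s_n + s_{n-1} d_{n-1} = \text{Id}_{M_n}$. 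Because $(-) \hat{\otimes} E$ is a functor on $\mathbf{LCS}$, the maps $s_n \hat{\otimes} \text{Id}_E$ satisfy the same homotopy identity, so $\{ M \hat{\otimes} E, d \hat{\otimes} \text{Id}_E\}$ splits in $\mathbf{LCS}$. Transporting back along the isomorphism gives the claim for free, and hence for arbitrary projective, $X$. The second assertion is proved by the symmetric argument, with the roles of left and right interchanged and $A$ replaced by $A^{\text{op}}$.

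The routine parts are the verification of $M \hat{\otimes}_A (A_+ \hat{\otimes} E) \cong M \hat{\otimes} E$ as locally convex spaces and the compatibility of the differentials under it. The conceptual crux—and the only place requiring care—is that the contracting homotopy lives in $\mathbf{LCS}$ rather than in $A$-$\textbf{mod}$: it is a family of continuous linear maps that need not be $A$-module morphisms. This is precisely why the reduction must be carried out through the \emph{unenriched} functor $(-) \hat{\otimes} E$ on $\mathbf{LCS}$, under which such maps are functorial, rather than through any module-level construction.
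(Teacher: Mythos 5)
Your proof is correct and follows essentially the same route as the paper's: reduce to the free case $X = A_+ \hat{\otimes} E$ via the canonical isomorphism $M \hat{\otimes}_A (A_+ \hat{\otimes} E) \cong M \hat{\otimes} E$, where the contracting homotopy of $\{M,d\}$ tensors with $\text{Id}_E$, and then pass to general projective $X$ by observing that a retract of an admissible sequence is admissible. The paper states this in two sentences; you have merely filled in the routine details (the conjugated homotopy $(\text{Id}\otimes p)\,s\,(\text{Id}\otimes i)$), which is consistent with the intended argument.
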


\begin{proof}
	If $X$ were a free left $A$-$\hat{\otimes}$-module, then the statement of the lemma would follow from the canonical isomorphism $M \hat{\otimes}_A A_+ \hat{\otimes} E \cong M \hat{\otimes} X$ for some $E \in \textbf{LCS}$. However, a retract of an admissible sequence is admissible, as well.
\end{proof}

\begin{lemma}
	\label{l3.4}
	Let $A$ be a $\hat{\otimes}$-algebra and let $S$ be an $A$-$\hat{\otimes}$-algebra, which is projective as a left $A$-$\hat{\otimes}$-module. Then we have 
	\[
	\text{dg}_{S^{op}}(M \hat{\otimes}_A S) \le \text{dg}_{A^{op}}(M).
	\]
	If $S$ is projective as a right $A$-$\hat{\otimes}$-module, then
	\[
	\text{dg}_S(S \hat{\otimes}_A M) \le \text{dg}_A(M).
	\]
\end{lemma}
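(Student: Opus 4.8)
The plan is to transport an admissible projective resolution of $M$ over $A$ to one of $M \hat{\otimes}_A S$ over $S$ by applying the additive functor $- \hat{\otimes}_A S$, and then to read off the length bound. I would treat the first inequality in detail; the second is symmetric. If $\text{dg}_{A^{op}}(M) = \infty$ there is nothing to prove, and if $M = 0$ both sides are $-\infty$, so I may assume $n := \text{dg}_{A^{op}}(M) < \infty$ and fix an admissible projective resolution in $\textbf{mod}$-$A$,
\[
0 \leftarrow M \xleftarrow{\varepsilon} P_0 \xleftarrow{d_0} \dots \xleftarrow{d_{n-1}} P_n \leftarrow 0,
\]
with each $P_i$ projective as a right $A$-$\hat{\otimes}$-module and the whole augmented complex splitting in $\textbf{LCS}$.

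Next I would apply $- \hat{\otimes}_A S$ termwise to obtain the complex of right $S$-$\hat{\otimes}$-modules
\[
0 \leftarrow M \hat{\otimes}_A S \leftarrow P_0 \hat{\otimes}_A S \leftarrow \dots \leftarrow P_n \hat{\otimes}_A S \leftarrow 0.
\]
Two facts then suffice. First, each $P_i \hat{\otimes}_A S$ is projective as a right $S$-$\hat{\otimes}$-module, which is exactly Lemma \ref{projective1}(1). Second, the augmented complex must remain admissible, i.e.\ split in $\textbf{LCS}$: this is where the hypothesis that $S$ is projective as a \emph{left} $A$-$\hat{\otimes}$-module enters, through Lemma \ref{admis1}, which guarantees that tensoring an admissible complex of right $A$-modules with a left-$A$-projective module preserves the splitting in $\textbf{LCS}$. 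Since splitting in $\textbf{LCS}$ forces exactness, the displayed complex is a genuine projective resolution of $M \hat{\otimes}_A S$ of length at most $n$, whence $\text{dg}_{S^{op}}(M \hat{\otimes}_A S) \le n = \text{dg}_{A^{op}}(M)$.

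For the second inequality the argument is entirely parallel: starting from an admissible projective resolution of the left $A$-$\hat{\otimes}$-module $M$ in $A$-$\textbf{mod}$, I would apply $S \hat{\otimes}_A -$, invoke the left-module half of Lemma \ref{projective1}(1) to see that each $S \hat{\otimes}_A P_i$ is projective in $S$-$\textbf{mod}$, and the corresponding half of Lemma \ref{admis1} — now using that $S$ is projective as a \emph{right} $A$-$\hat{\otimes}$-module — to keep the complex admissible. This yields $\text{dg}_S(S \hat{\otimes}_A M) \le \text{dg}_A(M)$.

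I do not expect a genuine obstacle here, since Lemmas \ref{projective1} and \ref{admis1} do all the heavy lifting; the reduction is the standard ``apply an exact, projective-preserving functor to a resolution'' argument. The one point that deserves care is bookkeeping of the hypotheses: the side on which $S$ must be projective is dictated by the admissibility lemma (left-projectivity of $S$ for the right-module statement, and right-projectivity for the left-module statement), whereas Lemma \ref{projective1} already produces projective terms without a projectivity assumption on $S$. Matching these correctly, together with the harmless degenerate cases, is the only subtlety.
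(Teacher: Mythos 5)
Your proposal is correct and follows essentially the same route as the paper: take an admissible projective resolution of $M$ in $\textbf{mod}$-$A$, apply $-\hat{\otimes}_A S$, and invoke Lemma \ref{projective1} for projectivity of the terms and Lemma \ref{admis1} (using left-projectivity of $S$ over $A$) for admissibility of the resulting complex. Your treatment of the degenerate cases and the explicit bookkeeping of which side of projectivity feeds into which lemma is slightly more careful than the paper's terse write-up, but the argument is the same.
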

\begin{proof}
	Suppose we have a projective resolution of $M$ in $\textbf{mod}$-$A$:
	\[
	0 \longleftarrow M \stackrel{d_0}{\longleftarrow} P_0 \stackrel{d_1}{\longleftarrow} \dots \longleftarrow P_n \longleftarrow 0 \longleftarrow \dots
	\]
	Then due to Lemma \ref{projective1} and \ref{admis1} the following sequence is the projective resolution for $M \hat{\otimes}_A S$ in $\textbf{mod}$-$S$:
	\[
	0 \longleftarrow M\hat{\otimes}_A S \stackrel{d_0 \otimes \text{Id}}{\longleftarrow} P_0 \hat{\otimes}_A S \stackrel{d_1 \otimes \text{Id}}{\longleftarrow} \dots \longleftarrow P_n \hat{\otimes}_A S \longleftarrow 0 \longleftarrow \dots
	\]
	Therefore, $\text{dg}_{S^{op}}(M \hat{\otimes}_A S) \le \text{dg}_{A^{op}}(M)$.
\end{proof}


\begin{lemma}
	\label{estimate for essential modules}
	Let $A$ be a self-induced Fr\'echet-Arens-Michael algebra together with a smooth $m$-tempered $\mathbb{R}$-action $\alpha$. Set $S = \mathscr{S}(\mathbb{R}, A; \alpha)$. 
	For any right $S$-$\hat{\otimes}$-module $M$ we have the following estimate:
	\[
	\text{dh}_{S^{\text{op}}}(M \hat{\otimes}_S S_\alpha) \le \text{dh}_{A^{\text{op}}}(M \hat{\otimes}_S S_\alpha) + 1 \le \text{dg}(A^{\text{op}}) + 1.
	\]
	And for any left $S$-$\hat{\otimes}$-module $M$ we have
	\[
	\text{dh}_{S}(S_\alpha \hat{\otimes} M) \le \text{dh}_{A}(S_\alpha \hat{\otimes} M) + 1 \le \text{dg}(A) + 1.
	\]
\end{lemma}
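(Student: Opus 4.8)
Write $N := M \hat{\otimes}_S S_\alpha$, which is simultaneously a right $S$-module and a right $A$-module: the right $A$-action on $S_\alpha$ commutes with the left $S$-action used to form $M \hat{\otimes}_S S_\alpha$ (this is precisely the assertion $S_\alpha \in \mathscr{S}(\mathbb{R},A;\alpha)$-\textbf{mod}-$A$ of Lemma \ref{lemma2.1}), so it descends to $N$. The second inequality is then immediate from the definition of the global dimension, since $\text{dh}_{A^{\text{op}}}(N) \le \text{dg}(A^{\text{op}})$. Hence the whole content is the first inequality $\text{dh}_{S^{\text{op}}}(N) \le \text{dh}_{A^{\text{op}}}(N) + 1$, and the plan is to resolve $N$ (as a right $S$-module) of length one by modules induced from right $A$-modules, and then invoke dimension shifting.

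First I would take the exact sequence of Theorem \ref{main theorem}; by hypothesis it splits in $\mathscr{S}(\mathbb{R},A;\alpha)$-$A$-\textbf{Mod}, hence in particular it splits as a sequence of left $S$-modules (forget the right $A$-structure). Applying the functor $M \hat{\otimes}_S (-)$ carries the left-$S$-linear splitting maps to continuous linear splittings, so it produces an admissible short exact sequence of right $S$-modules. Reassociating the completed tensor products, $M \hat{\otimes}_S (S_\alpha \hat{\otimes}_A S_\alpha) \cong (M \hat{\otimes}_S S_\alpha) \hat{\otimes}_A S_\alpha = N \hat{\otimes}_A S_\alpha$, this sequence becomes
\[
0 \longrightarrow N \hat{\otimes}_A S_\alpha \xrightarrow{\,\text{Id}_M \otimes j\,} N \hat{\otimes}_A S_\alpha \xrightarrow{\,\text{Id}_M \otimes m\,} N \longrightarrow 0,
\]
where each $N \hat{\otimes}_A S_\alpha$ carries its right $S$-structure through the right $S$-action on the outer copy of $S_\alpha$.

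Next I would bound the two outer terms. Since the $A$-$\hat{\otimes}$-bimodule structure of $S_\alpha$ exhibits $\mathscr{S}(\mathbb{R},A;\alpha)$ as an $A$-$\hat{\otimes}$-algebra, and $S_\alpha$ is projective as a left $A$-module by Corollary \ref{corollary1}, the module-level form of Lemma \ref{l3.4} applies: inducing a projective resolution of $N$ of length $d := \text{dh}_{A^{\text{op}}}(N)$ along $-\hat{\otimes}_A S_\alpha$ (projectivity and admissibility are preserved by Lemmas \ref{projective1} and \ref{admis1}) gives $\text{dh}_{S^{\text{op}}}(N \hat{\otimes}_A S_\alpha) \le d$. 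Feeding the displayed admissible sequence into the long exact $\text{Ext}_{S^{\text{op}}}(-,X)$-sequence (see \cite{Helem1986}) yields, for every right $S$-module $X$, the exact fragment $\text{Ext}^{d+1}_{S^{\text{op}}}(N \hat{\otimes}_A S_\alpha, X) \to \text{Ext}^{d+2}_{S^{\text{op}}}(N, X) \to \text{Ext}^{d+2}_{S^{\text{op}}}(N \hat{\otimes}_A S_\alpha, X)$, whose outer groups both vanish; therefore $\text{dh}_{S^{\text{op}}}(N) \le d + 1$, as required. The statement for left modules follows by the mirror-image argument, applying $(-)\hat{\otimes}_S M$ to the $A$-$\mathscr{S}(\mathbb{R},A;\alpha)$-\textbf{Mod} splitting and using that $S_\alpha$ is projective as a right $A$-module.

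The main difficulty I anticipate is bookkeeping rather than conceptual: one must check that $M \hat{\otimes}_S (-)$ really turns the two-sided splitting of Theorem \ref{main theorem} into an \emph{admissible} sequence of right $S$-modules, that the reassociation $M \hat{\otimes}_S S_\alpha \hat{\otimes}_A S_\alpha \cong N \hat{\otimes}_A S_\alpha$ is compatible with all four relevant module structures, and that the hypotheses of Lemma \ref{l3.4} are genuinely met by the $A$-algebra structure carried by $S_\alpha$.
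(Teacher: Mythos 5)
Your proposal is correct and follows essentially the same route as the paper: apply $M\hat{\otimes}_S(-)$ to the admissible sequence of Theorem \ref{main theorem}, bound the outer terms via Lemma \ref{l3.4} using the projectivity of $S_\alpha$ over $A$ (Corollary \ref{corollary1}), and conclude by dimension shifting. You merely make explicit some steps the paper leaves implicit (why admissibility is preserved, the reassociation of tensor products, and the long exact $\text{Ext}$ sequence behind the shift).
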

\begin{proof}
	Due to the Theorem \ref{main theorem} we have the following sequence:
	\begin{equation}
	\label{yeah}
		0 \longrightarrow S_\alpha \hat{\otimes}_A S_\alpha \longrightarrow S_\alpha \hat{\otimes}_A S_\alpha \longrightarrow S_\alpha \longrightarrow 0.
	\end{equation}
	By applying the functor $M \hat{\otimes}_S (-)$ to \ref{yeah}, we get
	\[
	0 \longrightarrow M \hat{\otimes}_S S_\alpha \hat{\otimes}_A S_\alpha \longrightarrow M \hat{\otimes}_S S_\alpha \hat{\otimes}_A S_\alpha \longrightarrow M \hat{\otimes}_S S_\alpha \longrightarrow 0.
	\]
	Obviously, this sequence is admissible, therefore, we can apply Lemma \ref{l3.4}, because $S_\alpha$ is a projective $A$-$\hat{\otimes}$-module (Corollary \ref{corollary1}).
	\[
	\text{dh}_{S^{\text{op}}}(M \hat{\otimes}_S S_\alpha) \le \text{dh}_{S^{\text{op}}}(M \hat{\otimes}_S S_\alpha \hat{\otimes}_A S_\alpha) + 1 \stackrel{L\ref{l3.4}}{\le} \text{dh}_{A^{\text{op}}}(M \hat{\otimes}_S S_\alpha) + 1 \le \text{dg}(A^{\text{op}}) + 1.
	\]
\end{proof}

So, we have just obtained the upper bound for projective dimension of essential modules. To obtain an estimate for an arbitrary right $S$-$\hat{\otimes}$-module, we use the method, described in the Lemmas 1-3 of the paper \cite{ogneva1984homological}.

\begin{theorem}{\cite[Theorem 5.2.1]{Helem1986}}
	Let $A$ be a $\hat{\otimes}$-algebra and let $X$ be a left $A$-$\hat{\otimes}$ module. Then the following complex is admissible:
	\[
	0 \longleftarrow X \longleftarrow (A_+ \hat{\otimes} X) \oplus (A \hat{\otimes}_A X) \xleftarrow{\delta_0} (A_+ \hat{\otimes} (A \hat{\otimes}_A X)) \oplus A \hat{\otimes} X \xleftarrow{\delta_1} A \hat{\otimes} (A \hat{\otimes}_A X) \longleftarrow 0.
	\]
	Moreover, this sequence is isomorphic to the tensor product of the following short admissible complexes:
	\begin{equation}
		0 \longleftarrow \text{Im}\, \delta_0 \xleftarrow{\delta_0} (A_+ \hat{\otimes} (A \hat{\otimes}_A X)) \oplus A \hat{\otimes} X \xleftarrow{\delta_1} A \hat{\otimes} (A \hat{\otimes}_A X) \longleftarrow 0,
	\end{equation}
	\begin{equation}
		0 \longleftarrow X \longleftarrow (A_+ \hat{\otimes} X) \oplus (A \hat{\otimes}_A X) \hookleftarrow \text{Im}\, \delta_0 \longleftarrow 0.
	\end{equation}
\end{theorem}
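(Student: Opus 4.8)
The plan is to reduce admissibility of the long complex to the splitting, in $\mathbf{LCS}$, of the two short sequences displayed in the statement. Since the long complex has length two and is exact, it is admissible precisely when each of its brutal short pieces at the images of the differentials splits in $\mathbf{LCS}$; these pieces are exactly the tail $0\to A\hat{\otimes}(A\hat{\otimes}_A X)\xrightarrow{\delta_1}(A_+\hat{\otimes}(A\hat{\otimes}_A X))\oplus(A\hat{\otimes} X)\xrightarrow{\delta_0}\mathrm{Im}\,\delta_0\to 0$ and the head $0\to\mathrm{Im}\,\delta_0\hookrightarrow(A_+\hat{\otimes} X)\oplus(A\hat{\otimes}_A X)\xrightarrow{\varepsilon}X\to 0$, spliced at $\mathrm{Im}\,\delta_0=\ker\varepsilon$. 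So it suffices to produce continuous linear splittings of these two sequences.

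First I would fix notation and write the differentials in matrix form. Put $Y:=A\hat{\otimes}_A X$, let $\mu\colon A_+\hat{\otimes} X\to X$ be the action $a\otimes x\mapsto a\cdot x$, let $\eta\colon Y\to X$ be the canonical morphism, and let $\iota\colon A\hookrightarrow A_+$ be the unitization inclusion. The augmentation is $\varepsilon=(\mu,-\eta)$, and $\delta_0,\delta_1$ are built from $\iota\hat{\otimes}\mathrm{id}$, the module actions, and $\eta$. The routine part is to check $\varepsilon\delta_0=0$ and $\delta_0\delta_1=0$ together with exactness. The one genuine point to watch is that the tempting $\mathbf{LCS}$-maps such as $a\otimes x\mapsto a\otimes x-1\otimes(a\cdot x)$ are not $A$-linear, so I must verify that the specific combinations defining $\delta_0,\delta_1$ really are morphisms of left $A$-$\hat{\otimes}$-modules.

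For admissibility the engine is the elementary short exact sequence
\[
0\to A\hat{\otimes} E\to A_+\hat{\otimes} E\to E\to 0\qquad(E\in\mathbf{LCS}),
\]
which is obtained by tensoring the $\mathbf{LCS}$-split short complex $0\to A\xrightarrow{\iota}A_+\to\mathbb{C}\to 0$ with $E$, and which therefore splits in $\mathbf{LCS}$ because $A_+=A\oplus\mathbb{C}\cdot 1$. Taking $E=X$ and $E=Y$, together with the section $x\mapsto 1\otimes x$ of $\mu$, furnishes continuous linear sections of all the maps occurring in the head and the tail; since a retract of a split sequence is split, and tensoring a split complex by a fixed space preserves splitting, both short sequences are admissible. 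This is also the precise sense in which the long complex is assembled from tensor products of short admissible complexes. Splicing the head and the tail at $\mathrm{Im}\,\delta_0$ — legitimate exactly because both split in $\mathbf{LCS}$ — then yields the admissibility of the long complex.

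I expect the main obstacle to be the non-unitality of $A$. The canonical map $\eta\colon A\hat{\otimes}_A X\to X$ is in general neither injective nor surjective and so admits no splitting on its own; the free summand $A_+\hat{\otimes} X$ in degree zero is present precisely to absorb this defect, and consequently the contracting homotopy cannot be built summand-by-summand but must mix the free and the essential parts. Writing that homotopy down explicitly and verifying $ds+sd=\mathrm{id}$ while keeping every map a continuous $A$-module morphism is the technical heart of the argument; along the way one should also confirm that $\mathrm{Im}\,\delta_0$ is closed and complemented, which is what makes the splice an honest $\mathbf{LCS}$-split complex.
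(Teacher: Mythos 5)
The paper gives no proof of this statement: it is quoted verbatim from Helemskii's book \cite{Helem1986} and used as a black box, so there is no internal argument to compare yours against; I can only judge the proposal on its own terms. As a plan it is sound --- reducing admissibility of the length-two exact complex to the $\mathbf{LCS}$-splitting of the two displayed short exact pieces, and locating the source of all splittings in $A_+ = A \oplus \mathbb{C}1$ together with the section $x \mapsto 1\otimes x$ of the augmentation, is the right idea. But as written it is an outline, not a proof. You never define $\delta_0$ and $\delta_1$, so none of the things that actually need checking ($\varepsilon\delta_0=0$, $\delta_0\delta_1=0$, exactness at the two middle terms, $A$-linearity of the differentials) can be carried out, and the sentence claiming that the elementary splittings ``furnish continuous linear sections of all the maps occurring in the head and the tail'' is precisely the step that requires an argument. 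For the tail it is not automatic: the projection $A_+\hat{\otimes}(A\hat{\otimes}_A X)\to A\hat{\otimes}(A\hat{\otimes}_A X)$ does give a continuous retraction of $\delta_1$, and exactness then yields a continuous linear \emph{bijection} from a complement of $\operatorname{Im}\delta_1$ onto $\operatorname{Im}\delta_0$, but for general complete locally convex spaces no open mapping theorem is available to upgrade this to a topological isomorphism. The clean way out is to write the contracting homotopy explicitly --- which is exactly the part you defer as ``the technical heart.''

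Two further points. First, the contracting homotopy only has to be a continuous linear map, not a morphism of $A$-modules: admissibility means splitting in $\mathbf{LCS}$, and demanding an $A$-linear homotopy, as your last paragraph does, asks for far more than the theorem asserts and more than the dimension estimates downstream require. Second, you silently replace the theorem's claim that the long complex ``is isomorphic to the tensor product'' of short admissible complexes by the weaker observation that it is the splice of the two displayed sequences at $\operatorname{Im}\delta_0$. The splice is all the paper needs for the estimate \eqref{estimate}, but if you instead exhibit the long complex as the total complex of a tensor product of two $\mathbf{LCS}$-split two-term complexes (which is what the ``moreover'' clause is pointing at), the contracting homotopy comes for free by tensoring the two homotopies, and both the admissibility claim and the decomposition are proved in one stroke; I would recommend that route.
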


Notice that the modules $(A_+ \hat{\otimes} (A \hat{\otimes}_A X)) \oplus A \hat{\otimes} X$ and $A \hat{\otimes} (A \hat{\otimes}_A X)$ are projective left $A$-modules, therefore, $\text{dh}_A(\text{Im}\, \delta_0) \le 1$.

But then we also have
\begin{equation}
	\label{estimate}
	\text{dh}_A(X) \le \max \{ \text{dh}_A((A_+ \hat{\otimes} X) \oplus (A \hat{\otimes}_A X)), \text{dh}_A(\text{Im}\, \delta_0) + 1 \} \le \max \{ \text{dh}_A(A \hat{\otimes}_A X), 2\},
\end{equation}
for any $\hat{\otimes}$-algebra $A$ and a left $A$-$\hat{\otimes}$-module $X$.
Combining \eqref{estimate} with the Lemma \ref{estimate for essential modules}, we get the following: for every left $S$-$\hat{\otimes}$-module $M$ we have
\[
\text{dh}_{S}(M) \stackrel{\eqref{estimate}}{\le} \max \{\text{dh}_{S}(S_\alpha \hat{\otimes}_S M), 2 \} \stackrel{L\ref{estimate for essential modules}}{\le} \max \{ \text{dgl}(A) + 1, 2\} = \max \{ \text{dgl}(A), 1\} + 1.
\]

\begin{theorem}
	\label{main theorem3}
	Let $A$ be a self-induced Fr\'echet-Arens-Michael algebra equipped with a smooth $m$-tempered $\mathbb{R}$-action $\alpha$. Then the following estimate takes place:
	\[
	\text{dgl}(\mathscr{S}(\mathbb{R}, A; \alpha)) \le \max \{ \text{dgl}(A), 1\} + 1.
	\]
\end{theorem}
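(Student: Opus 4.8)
The plan is to recognize that essentially all of the homological work has already been carried out in Lemma \ref{estimate for essential modules} and in the general estimate \eqref{estimate}, and that the theorem is merely the packaging of the inequality displayed just above its statement into a bound on the global dimension. Recall that by definition $\text{dgl}(S) = \sup\{\text{dh}_S(M) : M \in S\text{-}\textbf{mod}\}$, where $S = \mathscr{S}(\mathbb{R}, A; \alpha)$, so it suffices to bound $\text{dh}_S(M)$ by $\max\{\text{dgl}(A),1\}+1$ uniformly in $M$ and then pass to the supremum.

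First I would fix an arbitrary left $S$-$\hat{\otimes}$-module $M$ and apply the general estimate \eqref{estimate} with the algebra $A$ there replaced by $S$. This comes from Helemskii's Theorem 5.2.1 and reduces the dimension of $M$ to that of its \emph{essential part}: one obtains $\text{dh}_S(M) \le \max\{\text{dh}_S(S_\alpha \hat{\otimes}_S M), 2\}$, where the summand $2$ accounts for the length-$\le 1$ resolution of the complementary piece $\text{Im}\,\delta_0$ and where we have used that, as a left $S$-module, $S_\alpha$ coincides with $S$, so that $S \hat{\otimes}_S M \cong S_\alpha \hat{\otimes}_S M$ is the essential module attached to $M$.

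Second I would invoke Lemma \ref{estimate for essential modules}, whose content is exactly the bound $\text{dh}_S(S_\alpha \hat{\otimes}_S M) \le \text{dh}_A(S_\alpha \hat{\otimes}_S M) + 1 \le \text{dgl}(A) + 1$. This is the step in which the explicit $\widehat{\Omega}^1$-like admissible sequence $0 \longrightarrow S_\alpha \hat{\otimes}_A S_\alpha \longrightarrow S_\alpha \hat{\otimes}_A S_\alpha \longrightarrow S_\alpha \longrightarrow 0$ of Theorem \ref{main theorem} is used: tensoring it on the left with $M$ over $S$ produces a short admissible sequence, and the projectivity of $S_\alpha$ as an $A$-$\hat{\otimes}$-module (Corollary \ref{corollary1}) together with the change-of-rings Lemma \ref{l3.4} transports a projective resolution over $A$ into one over $S$, yielding the extra $+1$.

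Finally, combining the two displays gives $\text{dh}_S(M) \le \max\{\text{dgl}(A) + 1, 2\} = \max\{\text{dgl}(A), 1\} + 1$ independently of $M$, and taking the supremum over all $M \in S\text{-}\textbf{mod}$ delivers the asserted bound on $\text{dgl}(S)$. I expect the genuinely hard part of the argument to lie entirely upstream of this theorem, namely in establishing Lemma \ref{estimate for essential modules}, which itself rests on the delicate explicit splittings $\rho_x,\rho_y,\beta_x,\beta_y$ of the $\widehat{\Omega}^1$-like sequence and on the self-inducedness of $A$; by contrast, the passage from the per-module estimate to the global dimension is a purely formal supremum and presents no obstacle.
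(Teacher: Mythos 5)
Your proposal is correct and follows exactly the paper's own route: the estimate \eqref{estimate} derived from Helemskii's Theorem 5.2.1 reduces $\text{dh}_S(M)$ to $\max\{\text{dh}_S(S_\alpha \hat{\otimes}_S M), 2\}$, Lemma \ref{estimate for essential modules} bounds the essential part by $\text{dgl}(A)+1$, and the supremum over $M$ gives the claim. You also correctly locate the real content upstream in Lemma \ref{estimate for essential modules} and the splittings of the $\widehat{\Omega}^1$-like sequence.
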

The same result holds for $G = \mathbb{T}$:
\begin{theorem}
	\label{main theorem4}
	Let $A$ be a self-induced Fr\'echet-Arens-Michael algebra equipped with a smooth $m$-tempered $\mathbb{T}$-action $\alpha$. Then the following estimate takes place:
	\[
	\text{dgl}(C^\infty(\mathbb{T}, A; \alpha)) \le \max \{ \text{dgl}(A), 1\} + 1.
	\]
\end{theorem}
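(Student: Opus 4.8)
The plan is to transport the proof of Theorem \ref{main theorem3} essentially verbatim to the compact group, writing $S := C^\infty(\mathbb{T}, A; \alpha)$ and invoking Theorem \ref{main theorem2} in place of Theorem \ref{main theorem}. As the Remark opening this section indicates, the arithmetic of the bound is insensitive to the choice $G = \mathbb{R}$ versus $G = \mathbb{T}$, so the task reduces to verifying that each ingredient feeding the $\mathbb{R}$-argument has a $\mathbb{T}$-analogue.

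First I would record the $\mathbb{T}$-versions of the two structural facts about $S_\alpha$. Both Corollary \ref{corollary1} and Lemma \ref{essential} are proved solely from the topological isomorphism $A \hat{\otimes} C^\infty(\mathbb{T})_\alpha \cong C^\infty(\mathbb{T}, A; \alpha)$, $a \otimes f \mapsto (x \mapsto f(x)a)$, together with self-inducedness of $A$; no step sees the distinction between $\mathbb{R}$ and $\mathbb{T}$. Hence $S_\alpha$ is again projective as a left and as a right $A$-$\hat{\otimes}$-module and is essential as a left and right $A$-$\hat{\otimes}$-module. I would also note that as a left $S$-module $S_\alpha$ is just the regular module $S$, so $S_\alpha \hat{\otimes}_S M \cong S \hat{\otimes}_S M$ for every left $S$-module $M$.

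Next I would reprove Lemma \ref{estimate for essential modules} over $\mathbb{T}$. Tensoring the admissible short exact sequence of Theorem \ref{main theorem2},
\[
0 \longrightarrow S_\alpha \hat{\otimes}_A S_\alpha \longrightarrow S_\alpha \hat{\otimes}_A S_\alpha \longrightarrow S_\alpha \longrightarrow 0,
\]
with a left $S$-module $M$ over $S$ yields an admissible three-term sequence whose two left-hand terms can be regrouped as $S_\alpha \hat{\otimes}_A N$ with $N = S_\alpha \hat{\otimes}_S M$ a left $A$-module. Since $S_\alpha$ is projective as a right $A$-module and coincides with $S$ as a left $S$-module, Lemma \ref{l3.4} — stated for arbitrary $\hat{\otimes}$-algebras, hence unchanged — bounds the $S$-homological dimension of these terms by $\text{dh}_A(N) \le \text{dgl}(A)$. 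The length-one contribution of the sequence then gives $\text{dh}_S(S_\alpha \hat{\otimes}_S M) \le \text{dgl}(A) + 1$.

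Finally I would conclude as in the $\mathbb{R}$ case: the reduction theorem \cite[Theorem 5.2.1]{Helem1986} and the resulting estimate \eqref{estimate} are valid for any $\hat{\otimes}$-algebra, so for an arbitrary left $S$-module $M$,
\[
\text{dh}_S(M) \le \max\{\text{dh}_S(S_\alpha \hat{\otimes}_S M), 2\} \le \max\{\text{dgl}(A) + 1, 2\} = \max\{\text{dgl}(A), 1\} + 1,
\]
and taking the supremum over $M$ proves the estimate. The only point demanding genuine care lies upstream, in the split sequence itself: on $\mathbb{T}$ the sections analogous to $\beta_x, \beta_y$ must be built from periodic antiderivatives, which exist only once the relevant total integral over $\mathbb{T}$ vanishes — the periodic Fourier-dual Hadamard condition. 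Since this is exactly what the construction behind Theorem \ref{main theorem2} secures, once that theorem is available the dimension bound follows formally.
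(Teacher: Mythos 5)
Your proposal is correct and follows essentially the same route as the paper, which proves the $\mathbb{R}$-case in detail and then asserts (via the remark opening Section 3 and Theorem \ref{main theorem2}) that the identical chain of lemmas carries over to $\mathbb{T}$. Your explicit check that Corollary \ref{corollary1}, Lemma \ref{essential}, Lemma \ref{estimate for essential modules}, and the reduction estimate \eqref{estimate} are insensitive to the choice of group — with the one genuinely group-dependent point being the periodic antiderivatives behind the splitting in Theorem \ref{main theorem2} — is exactly the verification the paper leaves implicit.
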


	\printbibliography
	\Addresses	
\end{document}